\documentclass[a4paper,12pt]{amsart}
\usepackage{amsfonts}
\addtolength {\textwidth}{4cm}
\addtolength{\hoffset}{-2cm}
\usepackage[T1]{fontenc}
\usepackage{microtype}
\usepackage{setspace}
\usepackage{graphicx}
\usepackage[all]{xy}
\usepackage{amsmath}
\usepackage{amssymb}
\usepackage{booktabs}
\usepackage{array}
\usepackage{tabularx}
\usepackage{indentfirst}
\usepackage{cite}
    \renewcommand{\leq}{\leqslant}
    \renewcommand{\geq}{\geqslant}
\usepackage{amsthm}
\theoremstyle{plain}
\newtheorem{thm}{Theorem}[section]

\newtheorem{lem}[thm]{Lemma}
\newtheorem{prop}[thm]{Proposition}
\newtheorem{cor}[thm]{Corollary}

\theoremstyle{remark}
\newtheorem{oss}[thm]{Remark}
\newtheorem{ex}[thm]{Example}

\begin{document}

\title{The generalized lifting property of Bruhat intervals}
\author[Fabrizio Caselli]{Fabrizio Caselli$^\#$} \author [Paolo Sentinelli]{Paolo Sentinelli $^{*}$}

\address{$\#$ Dipartimento di matematica, Universit\`a di Bologna, Piazza di Porta San Donato 5, 40126 Bologna, Italy}
\address{${*}$Departamento de Matem\'aticas, Universidad de Chile,  Las Palmeras 3425, Nunoa, Santiago, Chile}

\email{$\#$ fabrizio.caselli@unibo.it}
\email{${*}$ paolosentinelli@gmail.com}

\thanks{${*}$ The second author was supported by MIUR grant FIRB-RBFR12RA9W-002 ``Perspectives in Lie theory''}
\date{}

\maketitle

\begin{abstract}In [E. Tsukerman and L. Williams, {\em Bruhat Interval Polytopes},
Advances in Mathematics, 285 (2015), 766-810] it is shown that every Bruhat interval of the symmetric group satisfies the so-called generalized lifting property. In this paper we show that a Coxeter group satisfies this property if and only if it is finite and simply-laced.
\end{abstract}

\section{Introduction}

The Kostant-Toda lattice is an integrable Hamiltonian system  which has been recently studied in detail by Kodama and Williams in \cite{KW}; in this paper particular attention is devoted to the asymptotic behaviour of the flow corresponding to an initial point associated with (a given point in) a cell $\mathcal R_{u,v}^+ $ of the totally non negative flag variety, where $[u,v]$ is a Bruhat interval in the symmetric group $S_n$.   When considering a natural multivariable generalisation of this problem, called the full Kostant-Toda hierarchy, they also proved that the moment polytope associated to the flow with  initial point corresponding  to $\mathcal R_{u,v}^+ $ is what they called a Bruhat interval polytope as it can be described as the convex hull in $\mathbb R^n $ of permutation vectors $(w(1),\ldots,w(n))$ as $w$ varies in the given Bruhat interval.

In \cite{TsukermanWilliams} Tsukerman and Williams studied some combinatorial aspects of a Bruhat interval polytope and in particular they found a dimension formula and proved that every face of a Bruhat interval polytope is itself a Bruhat interval polytope. The key result that they used in this study is what they called the generalized lifting property which generalizes the (standard) lifting property for symmetric groups and was surprisingly remained unnoticed so far. This generalized lifting property asserts that for every $u<v$ in the symmetric group and any ``minimal'' reflection (i.e. a transposition) $t$ such that $u<ut$ and $v>vt$, we have $u\lhd ut\leq v$ and $u\leq vt\lhd v$, where the symbol $\lhd$ denotes the covering relation in Bruhat order. The main fact about this property is that such minimal reflection always exists. The (standard) lifting property of a Bruhat interval is a classical feature of the theory of a Coxeter group $(W,S) $: it says that if $[u ,v] $ is a closed Bruhat interval in $W$ and $s\in S $ are such that $v> vs $ and $u<us $ then
 $u\lhd us\leq v $ and $u\leq vs\lhd v $, although it does not ensure that such simple reflection $s$ exists.  This property is well-known and is a basic tool in the combinatorics and geometry of Coxeter groups (see, e.g., \cite[Chapter 5]{humphreysCoxeter}, \cite[Chapter 2]{bjornerbrenti}, \cite{duCloux}) and has also found important applications in the combinatorics of Kazhdan-Lusztig polynomials (see, e.g., \cite{BCM, Del}).

The main target of this paper is to understand which Coxeter groups satisfy such generalized lifting property; our final result is that a Coxeter group satisfies this property if and only if it is finite and simply-laced. The proof uses in an essential way a geometric representation of a (finite) Coxeter group as a group generated by reflections in a (Euclidean) vector space. Several steps in the proof are done in complete generality, but there are some other peculiar results whose proofs still make use of the classification of finite Coxeter groups, and in particular we have to consider Weyl groups of type $D$ and $E$ separately. We think it would be very interesting to find a classification-free proof of our result.

The paper is organized as follows. In \S\ref{notation} we establish the notation, we introduce all the preliminary results in the theory of reflection and Coxeter groups that are needed, and we show some general results which link the generalized lifting property to the presentation of a Coxeter group as a reflection group.
In \S\ref{typeD} and in \S\ref{typeE} we develope in a similar and parallel way our theory for Weyl groups of type $D$ and $E$ respectively.  In \S\ref{affine} we consider affine Weyl groups and we prove some further basic facts that will allow us to complete the proof of our main result in \S\ref{sectionGLP}.
\section{Notation and preliminaries}\label{notation}

We begin by establishing some notation. $\mathbb{N}$ is the set of
non-negative integers and, if $n\in \mathbb{N}$,
$[n]:=\{1,2,...,n\}$; in particular $[0]=\varnothing$. We denote by $|X|$
the cardinality of a set $X$.

Next we recall some basic results in the theory of
Coxeter groups and reflection groups which
will be useful in the sequel. The reader can consult  the fundamental books \cite{bjornerbrenti, brown, humphreysCoxeter}
as comprehensive sources for this theory and in particular for the undefined notation, results stated without proof, and for further details. 

Let $(W,S)$ be a Coxeter system. If
$v,w\in W$ we define $\ell(v,w):=\ell(w)-\ell(v)$, where $\ell(z)$
is the length of the element $z\in W$ with respect to $S$. We let \begin{gather*}  D_L(w):=\{s\in S|\ell(sw)<\ell(w)\},
\\ D_R(w):=\{s\in S|\ell(ws)<\ell(w)\}.
\end{gather*}

The parabolic subgroup $W_J \subseteq W$ is the subgroup with $J\subseteq S$
as generator set. In particular $W_S=W$ and $W_\varnothing =
\{e\}$.

We consider on $W$ the Bruhat order $\leqslant$ (see, e.g., \cite[Chapter
2]{bjornerbrenti} or \cite[Chapter 5]{humphreysCoxeter}). With $[u,v]$ is denoted an interval in
$W$, i.e., if $v,w\in W$, $$[v,w]:=\{z\in W|v\leqslant
z\leqslant w\}.$$ The relation $v\vartriangleleft w$ means that $u < v$ and $\ell(u,v)=1$. 

We recall the following fundamental property of the Bruhat order, known as the
\emph{lifting property} (see \cite[Proposition 2.2.7]{bjornerbrenti}):
\begin{prop} \label{sollevamento} Let $v,w\in W$ be such that $v<w$ and $s\in D_R(w)\setminus D_R(v)$. Then $v\leqslant
ws$ and $vs\leqslant w$.
\end{prop}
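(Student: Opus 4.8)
The plan is to reduce everything to the subword characterization of the Bruhat order (see \cite[Theorem 2.2.2]{bjornerbrenti}): for $x,y\in W$ one has $x\leq y$ if and only if some, equivalently every, reduced word for $y$ admits a subword that is a reduced word for $x$. Since $s\in D_R(w)$ we have $\ell(ws)=\ell(w)-1$, so I may fix a reduced word $(s_1,\dots,s_k)$ for $w$ with $s_k=s$; then $(s_1,\dots,s_{k-1})$ is a reduced word for $ws$. The whole argument is organized around this choice of a reduced word for $w$ terminating in $s$.

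Because $v<w$, there is a reduced word for $v$ occurring as a subword $(s_{i_1},\dots,s_{i_p})$ of $(s_1,\dots,s_k)$, with $i_1<\dots<i_p$ and $p=\ell(v)$. Here the hypothesis $s\notin D_R(v)$ enters for the first time: if this subword used the last position, that is $i_p=k$, then $v=s_{i_1}\cdots s_{i_{p-1}}s$ would be a reduced expression for $v$ ending in $s$, forcing $s\in D_R(v)$, a contradiction. Hence $i_p<k$, so $(s_{i_1},\dots,s_{i_p})$ is already a subword of $(s_1,\dots,s_{k-1})$. By the subword property applied to the reduced word $(s_1,\dots,s_{k-1})$ for $ws$, this yields $v\leq ws$, the first assertion.

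For the second assertion I would append the final letter. Taking the positions $i_1<\dots<i_p<k$ together with $k$ itself, the corresponding subword $(s_{i_1},\dots,s_{i_p},s)$ of $(s_1,\dots,s_k)$ spells $vs$ and has length $p+1=\ell(v)+1=\ell(vs)$, the last equality again using $s\notin D_R(v)$; hence it is a reduced word for $vs$ realized as a subword of the reduced word $(s_1,\dots,s_k)$ for $w$, and the subword property gives $vs\leq w$. I expect the only real subtlety to lie in the two places where $s\notin D_R(v)$ is used: first to force the chosen subword off the last position, and then to guarantee that appending $s$ keeps the expression reduced. Both become transparent once one insists on a reduced word for $w$ ending in $s$, so I anticipate no genuine obstacle beyond this bookkeeping.
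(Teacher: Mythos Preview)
Your argument is correct and is essentially the standard textbook proof via the subword property; the paper itself does not supply a proof of this proposition but simply quotes it from \cite[Proposition~2.2.7]{bjornerbrenti}, where the same subword argument is given. One minor remark: for the inequality $vs\leq w$ you do not actually need to verify that $(s_{i_1},\dots,s_{i_p},s)$ is reduced, since by \cite[Corollary~2.2.3]{bjornerbrenti} any subword of a reduced word for $w$ evaluating to $vs$ already forces $vs\leq w$; your verification of reducedness is harmless but superfluous.
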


A Coxeter system $(W,S)$ is called \emph{simply-laced} if $m(s,r)\leqslant 3$ for every $s,r\in S$, $m$ being its Coxeter matrix.

The set $T=\{wsw^{-1}|s\in S, w\in W\}$ is the set of reflections of a Coxeter system $(W,S)$.
For a Coxeter group  $W$ we let, for any $u,v\in W$,
\begin{eqnarray*} D(u) &:=& \{t \in T | ut<u\}, \\ A(u) &:=& \{t \in T | u<ut\}, \\ AD(u,v) &:=& A(u)\cap D(v).
\end{eqnarray*}

A finite reflection group  is a finite subgroup of $GL(V)$, where $V$ is a finite dimensional real vector space, which is generated by reflections, i.e. elements  of order 2 that fix a hyperplane pointwise.
Let $W$ be a finite reflection group and $T$ be the set of all reflections in $W$.
For each reflection $t\in T$ we denote by $H_t$ the hyperplane fixed by $t$ and call this the reflecting hyperplane associated to the reflection $t$. Moreover, for every reflection $t\in T$ one can choose a non-zero vector $\alpha_t\in V $ such that
\begin{itemize}
 \item $t(\alpha_t)=-\alpha_t$ for all $t\in T$;
 \item the set  $\Phi=\{\pm\alpha_t:\, t\in T\}$ is invariant under the action of $W$.
\end{itemize}
The set $\Phi$ is called the set of roots and we assume without lack of generality that $\Phi$ spans $V$. The connected components of the complement of the union of all reflecting hyperplanes are called chambers and we recall that the action of $W$ on the set of chambers is simply transitive. If we fix a chamber $\mathcal C_0$ (that will be called fundamental chamber) we let $H_t^+$ be the halfspace determined by $H_t$ which contains $\mathcal C_0$ and we let
\[\Phi^+=\Phi^+(\mathcal C_0)=\bigcup_{t\in T}\{\pm \alpha_t\} \cap H_t^+.\] Then there exists a unique set of roots $\Delta \subset \Phi^+$, called base, such that
\begin{itemize}
\item $\Delta$ is a linear basis of $V$;
\item every root in $\Phi^+$ can be expressed as a linear combination with non-negative coefficients of the elements in $\Delta$.
\end{itemize}
Changing $\alpha_t$ with $-\alpha_t$ if necessary we can assume that $\Phi^+=\{\alpha_t,\, t\in T\}$. If we let $S\subset T$ be the indexing set of $\Delta$, i.e.
$$\Delta=\{\alpha_s:\,s\in S\}$$
we have that $(W,S)$ is a finite Coxeter system whose set of reflections is exactly $T$, and every finite Coxeter system arises in this way as a finite reflection group. In the sequel we will always assume that a finite Coxeter group comes equipped with the structure of a finite reflection group as above.

The set of reflections $T$ is partially ordered by letting, for all $r,t\in T$, $r\prec t$ if $\alpha_t-\alpha_r$ is still a linear combination of the roots $\alpha_s$, $s\in S$, with non-negative coefficients.

The length function of an element $w\in W$ and the set $D(w)$ have the following geometric interpretation which will be fundamental in our study. The length of an element $w\in W$ equals the number of reflecting hyperplanes which separate the fundamental chamber $\mathcal C_0$ from the chamber $\mathcal C_w: =w^{-1}(\mathcal C_0)$. A direct consequence is that the set $D(w)$ equals the set of reflections $t\in T$ such that the associated reflecting hyperplane separates $\mathcal C_w$ from $\mathcal C_0$, and in particular we have

\begin{equation} \label{lunghezzaw}
  \ell(w)=|D(w)|.
\end{equation}
\begin{oss} \label{remarkAD}
 An immediate consequence of \eqref{lunghezzaw} is that, if $\ell(u)<\ell(v)$ and in particular if $u<v$, then $|AD(u,v)|\geqslant \ell(u,v)>0$,
since $|AD(u,v)|=\ell(u,v)+|D(u)\setminus D(v)|$. In particular, if $|AD(u,v)|=1$ we have $\ell(u,v)=1$.
\end{oss}
We will be interested also in the action of reflections on halfspaces determined by the reflecting hyperplanes.  It is clear that for any reflection $t$ we have $t(H_r)=H_{r^t}$ where $r^t:=trt$ and therefore we have that either $t(H_r^+)=H_{r^t}^+$ or $t(H_r^+)=H_{r^t}^-$.

The following result is crucial in our work.
\begin{prop}\label{crucialprop}
 Let $r,t\in T$. The following are equivalent
 \begin{enumerate}
 \item $t(H_r^+)=H_{r^t}^-$;
  \item $r\in D(t)$;
  \item for some $u\in W$ we have $r\in D(u)$ if and only if $r^t\in A(ut)$.
  \item for every $v\in W$ we have $r\in D(v)$ if and only if $r^t\in A(vt)$.
 \end{enumerate}
\end{prop}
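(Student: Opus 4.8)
The plan is to translate everything into the geometry of chambers and halfspaces recalled above and then run a single computation that treats (3) and (4) simultaneously. Recall that for a reflection $r$ and any $w\in W$, the condition $r\in D(w)$ means that $H_r$ separates $\mathcal C_w$ from $\mathcal C_0$; since $\mathcal C_0\subset H_r^+$, this is the same as $\mathcal C_w\subset H_r^-$. Dually, for a reflection $r$ one always has that $w$ and $wr$ are Bruhat comparable and of different length, so $A(w)=T\setminus D(w)$; hence $r\in A(w)$ means that $H_r$ does \emph{not} separate $\mathcal C_w$ from $\mathcal C_0$, i.e. $\mathcal C_w\subset H_r^+$. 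I will also use repeatedly that $t$ is an involutive isometry, so $\mathcal C_{vt}=(vt)^{-1}(\mathcal C_0)=t(\mathcal C_v)$, and that $t$ maps the two halfspaces of $H_r$ bijectively onto the two halfspaces of $H_{r^t}=t(H_r)$; applying $t$ to both sides of (1) and using $t^2=\mathrm{id}$ shows that (1) is equivalent to $t(H_{r^t}^+)=H_r^-$.

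For the equivalence (1) $\Leftrightarrow$ (2) I would argue directly. Condition (2) says $r\in D(t)$, i.e. $H_r$ separates $\mathcal C_t=t(\mathcal C_0)$ from $\mathcal C_0$, equivalently $t(\mathcal C_0)\subset H_r^-$; applying the involution $t$ this reads $\mathcal C_0\subset t(H_r^-)$. Now $t(H_r^-)$ is one of the two halfspaces $H_{r^t}^{\pm}$, and $\mathcal C_0$ lies in $H_{r^t}^+$ but not in $H_{r^t}^-$; hence $\mathcal C_0\subset t(H_r^-)$ holds if and only if $t(H_r^-)=H_{r^t}^+$, that is, if and only if $t(H_r^+)=H_{r^t}^-$, which is exactly (1).

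The heart of the proof is one computation valid for an \emph{arbitrary} $v\in W$. Starting from $r^t\in A(vt)\Leftrightarrow \mathcal C_{vt}\subset H_{r^t}^+$ and substituting $\mathcal C_{vt}=t(\mathcal C_v)$, I rewrite the right-hand side as $\mathcal C_v\subset t(H_{r^t}^+)$ (using $t^{-1}=t$). If (1) holds, then $t(H_{r^t}^+)=H_r^-$, so $r^t\in A(vt)\Leftrightarrow \mathcal C_v\subset H_r^-\Leftrightarrow r\in D(v)$; since $v$ was arbitrary this is precisely (4). If instead (1) fails, then $t(H_{r^t}^+)=H_r^+$, so $r^t\in A(vt)\Leftrightarrow \mathcal C_v\subset H_r^+\Leftrightarrow r\notin D(v)$, again for every $v$; in this case the biconditional in (3) fails for every choice of $u$, so (3) cannot hold.

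It remains to assemble the chain. The computation just described gives (1) $\Rightarrow$ (4), while (4) $\Rightarrow$ (3) is immediate (take any $u\in W$, e.g. $u=e$). Its contrapositive form gives $\neg(1)\Rightarrow\neg(3)$, i.e. (3) $\Rightarrow$ (1); together with the already established (1) $\Leftrightarrow$ (2) this closes the cycle and proves all four statements equivalent. I do not expect a conceptual obstacle here: the only delicate point is the bookkeeping of which halfspace $t$ sends where, and in particular the passage between the phrase ``$H_r$ separates $\,\cdot\,$ from $\mathcal C_0$'' and the explicit containments in $H_r^{\pm}$. Keeping the reference inclusions $\mathcal C_0\subset H_r^+$ and $\mathcal C_0\subset H_{r^t}^+$ fixed throughout is what makes every step unambiguous.
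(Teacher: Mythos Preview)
Your proof is correct and follows essentially the same approach as the paper's: both translate conditions (1)--(4) into the chamber/halfspace geometry and compare containments. The organization differs only slightly---you prove (1)$\Leftrightarrow$(2) directly (avoiding the paper's detour through the inversion antiautomorphism of Bruhat order) and then run a single computation that yields both (1)$\Rightarrow$(4) and, in its contrapositive form, (3)$\Rightarrow$(1); the paper instead closes the cycle (1)$\Rightarrow$(2)$\Rightarrow$(3)$\Rightarrow$(1) and separately shows (1)$\Rightarrow$(4)$\Rightarrow$(2).
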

\begin{proof}
(1) implies (2).  Since $\mathcal C_r\subset H_r^-$ we have $\mathcal C_{rt}=t(\mathcal C_r)\subset H_{r^t}^+$. Therefore $r^t\in A(rt)$ i.e. $t>rt$. Since the map $w\mapsto w^{-1}$ is an automorphism of $W$ as a poset, we also have $t>tr$ i.e. $r\in D(t)$.

(2) implies (3).  Letting $u=t$ we clearly have $r\in D(u)$ and $r^t\in A(e)$.

(3) implies (1).  We have either $\mathcal C_u\in H_r^-$ and $\mathcal C_{ut}\in H_{r^t}^+$ or $\mathcal C_u\in H_r^+$ and $\mathcal C_{ut}\in H_{r^t}^-$. As $\mathcal C_{ut}=t (\mathcal C_u)$ it follows that $t(H_r^-)=H_{r^t}^+$.

(1) implies (4). Let $v\in W$. We have $r\in D(v)$ if and only if $\mathcal C_v\in H_r^-$ and $r^t\in A(vt)$ if and only if $\mathcal C_{vt}=t(\mathcal C_v)\in H_{r^t}^+$ and since $t(H_r^-)=H_{r^t}^+$ the two conditions are equivalent.

(4) implies (2) Take $v=t$ and the result follows.
\end{proof}

\begin{cor}\label{commrefl}
 Let $r,t\in T$, $r\neq t$, be such that $rt=tr$. Then $r \in A(t)$.
\end{cor}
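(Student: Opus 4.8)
The plan is to reduce the statement to the geometric criterion of Proposition \ref{crucialprop} and then to exploit the fact that two commuting reflections have orthogonal roots. First I would observe that, since $r$ and $t$ are distinct and commute, we have $r^t = trt = r$ (indeed $trt = t(rt) = t(tr) = r$), so $r$ is fixed by conjugation by $t$ and $H_{r^t} = H_r$. By Proposition \ref{crucialprop} the condition $r \in D(t)$ is then equivalent to $t(H_r^+) = H_r^-$. Moreover, for the fixed element $t$ the set $T$ is the disjoint union $A(t) \sqcup D(t)$: for any reflection $r$ the elements $t$ and $tr$ are comparable in the Bruhat order and have different lengths. Hence it suffices to rule out $r \in D(t)$, i.e. to show $t(H_r^+) = H_r^+$.

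Next I would establish that two distinct commuting reflections have orthogonal roots. Since $r \neq t$ and $rt = tr$, the product $rt$ is an involution, because $(rt)^2 = r(tr)t = r(rt)t = r^2 t^2 = e$, and it is different from the identity (as $r\neq t$). Therefore the dihedral group $\langle r,t\rangle$ has order $4$, which is equivalent to saying that the angle between the reflecting hyperplanes $H_r$ and $H_t$ is $\pi/2$, that is $\langle \alpha_r, \alpha_t\rangle = 0$.

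Finally, from $\alpha_r \perp \alpha_t$ it follows that $t(\alpha_r) = \alpha_r$, since $t$ fixes pointwise the hyperplane $H_t = \alpha_t^\perp$, which contains $\alpha_r$. As $t$ is an orthogonal transformation fixing $\alpha_r$, it preserves the sign of $\langle x, \alpha_r\rangle$ for every $x \in V$, and hence it maps each of the two halfspaces determined by $H_r$ to itself; in particular $t(H_r^+) = H_r^+ \neq H_r^-$. Combining this with the reduction in the first paragraph, we conclude $r \notin D(t)$ and therefore $r \in A(t)$.

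I expect the main obstacle to be the clean identification of the orthogonality of the roots with the commutativity of the reflections, together with keeping the halfspace sign conventions straight, so that ``$t$ fixes $\alpha_r$'' genuinely yields $t(H_r^+) = H_r^+$ rather than the opposite halfspace. Once these two points are pinned down, the conclusion is a direct application of Proposition \ref{crucialprop}.
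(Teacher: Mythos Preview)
Your proof is correct and follows essentially the same strategy as the paper's: both reduce via Proposition~\ref{crucialprop} to the question of whether $t$ swaps the two halfspaces determined by $H_r$, and then use the geometry of commuting reflections to rule this out. The only difference is organizational---the paper argues by contradiction (from $t(H_r^+)=H_r^-$ it deduces $H_t\subseteq H_r$ and then $r=t$), whereas you proceed directly by first establishing $\langle\alpha_r,\alpha_t\rangle=0$, hence $t(\alpha_r)=\alpha_r$ and $t(H_r^+)=H_r^+$; these are two packagings of the same geometric fact.
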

\begin{proof}
By contradiction, if $r\in D(t)$, we have $t(H_r^+)=H_r^-$, by Proposition \ref{crucialprop}. This implies $H_t\subseteq H_r$ and so $H_r=H_t$; moreover $r(\alpha_t)$ is still an eigenvector for $t$ of eigenvalue $-1$, since $r$ and $t$ commute. Therefore $r(\alpha_t)=c\alpha_t$ and since $r$ is reflection we necessarily have $c=-1$ and hence $r=t$.
\end{proof}

If $t\in T$, we have the following partition of the set $T$:
$$T= \{t\} \sqcup (D(t)\setminus \{t\}) \sqcup A(t),$$
and an involution $T \rightarrow T$ defined as the conjugation
by $t$, i.e.
$$r^t:=trt,$$ for all $r\in T$.

\begin{lem} \label{lemmainvarianza} The sets $\{t\}$, $D(t)\setminus \{t\}$ and $A(t)$ are invariant under conjugation by $t$, for all $t\in T$.
\end{lem}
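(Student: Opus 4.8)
The plan is to reduce everything to two elementary symmetries: the fact that conjugation by $t$, namely $r\mapsto r^t=trt$, is an involution of the finite set $T$, and the fact that inversion $w\mapsto w^{-1}$ is an automorphism of the Bruhat poset (already invoked in the proof of Proposition~\ref{crucialprop}). The block $\{t\}$ is invariant because $t^t=t$, so it remains to treat $A(t)$ and $D(t)\setminus\{t\}$.

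I would first show that $A(t)$ is stable under conjugation by $t$. Fix $r\in T$. A direct computation gives $t\,r^t=t(trt)=rt$, so that $r^t\in A(t)$ exactly when $t<rt$. On the other hand $r\in A(t)$ means $t<tr$. The whole point is the equivalence $t<tr\iff t<rt$, which holds because inversion fixes $t$ and carries $tr$ to $(tr)^{-1}=rt$; hence $r\in A(t)$ if and only if $r^t\in A(t)$, i.e. $A(t)$ is invariant.

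The set $D(t)\setminus\{t\}$ can then be handled in either of two ways. The quickest is to note that conjugation by $t$ is a bijection of the finite set $T$ stabilizing the two blocks $\{t\}$ and $A(t)$ of the partition $T=\{t\}\sqcup(D(t)\setminus\{t\})\sqcup A(t)$, and must therefore stabilize the remaining block as well. Alternatively one runs the identical computation: $r\in D(t)$ means $tr<t$, which by inversion is equivalent to $rt<t$, i.e. to $t\,r^t<t$, i.e. to $r^t\in D(t)$; combined with $t^t=t$ this gives invariance of $D(t)\setminus\{t\}$.

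I do not expect a genuine obstacle here. The only points requiring a word of care are that the displayed decomposition of $T$ is a genuine partition (for $r\in T$ one has $tr\neq t$, since $tr=t$ would force $r=e\notin T$, so that exactly one of $tr<t$, $tr>t$ occurs), and---more conceptually---the recognition that the relevant symmetry is inversion rather than conjugation, the latter not being order preserving in general.
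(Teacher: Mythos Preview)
Your proof is correct and follows essentially the same approach as the paper: both use the identity $t\cdot r^t=rt$ (equivalently $r^t\cdot t=tr$) together with the fact that inversion is a poset automorphism to move between $tr$ and $rt$, and then obtain the remaining block by complementation in the partition of $T$. The only cosmetic difference is that the paper treats $D(t)\setminus\{t\}$ first and deduces invariance of $A(t)$, whereas you do the reverse (and even sketch the paper's order as your ``alternatively'').
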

\begin{proof} Clearly $\{t\}$ is invariant. Let $r\in D(t)\setminus \{t\}$, i.e. $e<rt<t$ or, equivalently $e<tr=(rt)^{-1}< t$, since the inversion is an automorphism of the poset $(W,\leqslant)$; then $e<r^tt=tr< t$. It follows that $A(t)$ is invariant too.
\end{proof}





The following theorem generalizes, in the symmetric group, the lifting property stated in Proposition \ref{sollevamento}; this result will be extended to the case of finite simply-laced Coxeter groups and this is the main motivation of the paper.
\begin{thm}{\cite[Theorem 3.3]{TsukermanWilliams}} \label{GLPsymmetric}
 Let $[u,v]$, $u<v$, be a Bruhat interval in the symmetric group $A_n$ and $t$ be a minimal element in $AD(u,v)$ with respect to $\prec$. Then $u\vartriangleleft ut \leqslant v$ and $u\leqslant vt \vartriangleleft v$.
\end{thm}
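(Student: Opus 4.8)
The plan is to work in the standard realization of the symmetric group in one-line notation, where the reflections are the transpositions $t_{ij}$ exchanging the values in positions $i<j$, the positive root of $t_{ij}$ is $e_i-e_j=\sum_{l=i}^{j-1}\alpha_{s_l}$, and consequently $t_{pq}\prec t_{ij}$ precisely when $i\le p<q\le j$. In this language $t_{ij}\in A(u)$ iff $u(i)<u(j)$ and $t_{ij}\in D(v)$ iff $v(i)>v(j)$; so writing $t=t_{ij}$, the hypothesis $t\in AD(u,v)$ reads $u(i)<u(j)$ and $v(i)>v(j)$, while minimality of $t$ with respect to $\prec$ says exactly that $(i,j)$ is the \emph{only} pair $p<q$ in $[i,j]$ with $u(p)<u(q)$ and $v(p)>v(q)$. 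This reformulation of minimality is the engine for everything.

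First I would prove the two covering relations. Since $u(i)<u(j)$ one has $\ell(ut)=\ell(u)+1+2\,|\{k: i<k<j,\ u(i)<u(k)<u(j)\}|$, so $u\vartriangleleft ut$ is equivalent to the absence of such an intermediate value. If some $k\in(i,j)$ satisfied $u(i)<u(k)<u(j)$, then $t_{ik},t_{kj}\in A(u)$ and both are strictly below $t$ in $\prec$; moreover $v(i)>v(j)$ forces $v(k)<v(i)$ or $v(k)>v(j)$, i.e. $t_{ik}\in D(v)$ or $t_{kj}\in D(v)$. Either way we obtain an element of $AD(u,v)$ strictly below $t$, contradicting minimality; hence $u\vartriangleleft ut$. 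The relation $vt\vartriangleleft v$ is proved by the symmetric argument, ruling out $k\in(i,j)$ with $v(j)<v(k)<v(i)$.

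For the inequality $ut\le v$ I would use the rank form of the Bruhat criterion: $u\le v$ iff $r_u(a,b)\ge r_v(a,b)$ for all $a,b$, where $r_w(a,b):=|\{c\le a:\ w(c)\le b\}|$. Because $ut$ differs from $u$ only by exchanging the values at positions $i$ and $j$, a direct computation gives $r_{ut}(a,b)=r_u(a,b)-1$ exactly when $i\le a<j$ and $u(i)\le b<u(j)$, and $r_{ut}=r_u$ otherwise. Away from the rectangle $R:=\{(a,b):\ i\le a<j,\ u(i)\le b<u(j)\}$ the inequality $r_{ut}\ge r_v$ reduces to $r_u\ge r_v$, which holds since $u<v$; on $R$ it becomes the \emph{strict} inequality $r_u(a,b)>r_v(a,b)$. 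Thus $ut\le v$ is equivalent to strictness of $r_u>r_v$ on $R$. To establish this I would argue by contradiction: assuming $r_u(a,b)=r_v(a,b)$ at some cell of $R$, I would choose such a cell extremally and analyse the positions realizing this common value in order to produce $p<q$ in $[i,j]$ with $(p,q)\ne(i,j)$, $u(p)<u(q)$ and $v(p)>v(q)$, i.e. a transposition $t_{pq}\prec t$ with $t_{pq}\ne t$ lying in $AD(u,v)$, again contradicting minimality.

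The remaining inequality $u\le vt$ I would deduce from $ut\le v$ by symmetry rather than repeating the argument: right multiplication by the longest element $w_0$ reverses the Bruhat order and conjugation $\phi(w):=w_0ww_0$ is a length-preserving automorphism of the Coxeter system. One checks that, for $u'=vw_0$ and $v'=uw_0$, the statement $u\le vt$ is equivalent to $u'\phi(t)\le v'$, and that $\phi(t)$ is again minimal in $AD(u',v')$ with $u'<v'$; so the case already proved applies. I expect the single genuine obstacle to be precisely the strictness of $r_u>r_v$ on $R$: the delicate point is that the positions witnessing a hypothetical equality $r_u(a,b)=r_v(a,b)$ may a priori lie outside the window $[i,j]$, so the contradiction with minimality is not immediate, and the real work is to make the extremal choice of cell and use $u\le v$ to confine the relevant witnesses to $[i,j]$.
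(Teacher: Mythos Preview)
The paper does not give its own proof of this theorem: it is quoted from Tsukerman--Williams as motivation, and the paper instead proves the more general Theorem~\ref{GLPfinite} for all finite simply-laced groups, obtaining the $A_n$ case as the parabolic subgroup $A_{n-1}\subset D_n$. Your proposal follows essentially the original Tsukerman--Williams route (tableau/rank-function criterion for Bruhat order in $A_n$), and is therefore genuinely different from the paper's uniform argument.

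Concretely, the paper handles the covering relations $u\vartriangleleft ut$ and $vt\vartriangleleft v$ not through the ``no intermediate value'' length formula you use, but by building an explicit bijection $\phi:D(u)\to D(ut)\setminus\{t\}$ given by $\phi(r)=r^t$ on $A(t)$ and $\phi(r)=r$ on $D(t)$, and verifying it is well defined using Proposition~\ref{crucialprop} together with the structural Theorems~\ref{tt*Dn} and~\ref{mainlemm}. For $ut\le v$ and $u\le vt$ the paper argues by induction on $\prec$: if $t\notin S$ it picks $s\in S\cap D(t)$ with $s^t=t^s$ minimal in $AD(us,vs)$ (Propositions~\ref{goodsDn} and~\ref{goodsE8}), applies the induction hypothesis to $(us,vs)$, and then returns via the ordinary lifting property. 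Your approach trades this inductive machinery for the rank-function characterization specific to $S_n$; it is shorter but does not extend to $D_n$ or $E_8$, which is exactly why the paper develops the other method.

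Your covering argument and your $w_0$-symmetry reduction of $u\le vt$ to $ut\le v$ are both correct (conjugation by $w_0$ sends $t_{ij}$ to $t_{n+1-j,\,n+1-i}$ and is an automorphism of $(T,\prec)$, so minimality is preserved). The one place where your write-up is still only a plan is, as you yourself flag, the strictness $r_u(a,b)>r_v(a,b)$ on the rectangle $R$: you promise an ``extremal cell'' argument producing a smaller $t_{pq}\in AD(u,v)$, but you do not actually carry it out, and the subtlety you identify---that the witnesses to an equality might a priori lie outside $[i,j]$---is real. This step is exactly Lemma~3.5 in Tsukerman--Williams and takes a page of careful bookkeeping; until you write it down, the proof is incomplete at its key point.
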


We end this section recalling some known facts about the $R$-polynomials; see \cite[Chapter 5]{bjornerbrenti}, \cite[Chapter 7]{humphreysCoxeter} and the references there for more information. We just recall here that for any Coxeter group $W$ there exists a unique family of
polynomials $\{R_{u,v}\}_{u,v\in W}\subseteq
\mathbb{Z}[q]$ such that, for all $u,v\in W$:
\begin{enumerate}
  \item $R_{u,v}=0 ~~\mbox{if $u \nleqslant v$}$;
  \item $R_{v,v}=1$;
  \item  if $u < v$ and $s\in D_R(v)$ then
  \begin{equation}\label{ricorsioneRpolinomi} R_{u,v}=\begin{cases}
  R_{us,vs}, &\mbox{if $s\in D_R(u)$,} \\ qR_{us,vs}+(q-1)R_{u,vs}, &\mbox{otherwise.} \end{cases}
  \end{equation}
\end{enumerate} These polynomials are called $R$-\emph{polynomials}. In the hypothesis of Theorem \ref{GLPsymmetric}, the simple reflection in the recursion \eqref{ricorsioneRpolinomi} can be replaced by any minimal reflection in $AD(u,v)$.

\begin{prop} {\cite[Proposition 5.3]{TsukermanWilliams}} \label{propRsimm} Let $[u,v]$, $u<v$, be a Bruhat interval in the symmetric group $A_n$ and $t$ be a minimal element in $AD(u,v)$. Then $$R_{u,v}=qR_{ut,vt}+(q-1)R_{u,vt}.$$
\end{prop}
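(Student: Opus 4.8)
The plan is to prove the identity by induction on $\ell(v)$, peeling off a simple reflection from $v$ by means of the classical recursion \eqref{ricorsioneRpolinomi} and transporting the reflection $t$ to a conjugate living in the shorter interval. The role of the generalized lifting property (Theorem \ref{GLPsymmetric}) is to guarantee that all the covering relations needed to run \eqref{ricorsioneRpolinomi} in both directions are actually available; indeed, it already supplies $u\vartriangleleft ut\leq v$ and $u\leq vt\vartriangleleft v$.

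Concretely, I would fix a simple reflection $s\in D_R(v)$ and set $r:=sts=t^{s}\in T$. The elementary identities $(us)r=(ut)s$ and $(vs)r=(vt)s$ say that right multiplication by $s$ intertwines the action of $t$ on $\{u,v\}$ with the action of $r$ on $\{us,vs\}$. The idea is then to expand $R_{u,v}$, $R_{ut,vt}$ and $R_{u,vt}$ by \eqref{ricorsioneRpolinomi} with respect to $s$, to invoke the inductive hypothesis on the shorter interval $[us,vs]$ with minimal reflection $r$, and to match the outcomes. To see the mechanism close up in a representative case, suppose $s\in D_R(u)\cap D_R(v)$: then \eqref{ricorsioneRpolinomi} gives $R_{u,v}=R_{us,vs}$, induction gives $R_{us,vs}=qR_{uts,vts}+(q-1)R_{us,vts}$, and Theorem \ref{GLPsymmetric} applied to $r$ inside $[us,vs]$ yields $us\vartriangleleft uts$ and $vts\vartriangleleft vs$, whence $s\in D_R(ut)\cap D_R(vt)$. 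Feeding this back into \eqref{ricorsioneRpolinomi} identifies $R_{uts,vts}=R_{ut,vt}$ and $R_{us,vts}=R_{u,vt}$, and the two sides coincide.

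The step I expect to be the true obstacle is hidden in the phrase ``with minimal reflection $r$'': one must show that $r=t^{s}$ again lies in $A(us)\cap D(vs)$ and, above all, that it is $\prec$-minimal there. For an arbitrary reflection this is false, since conjugation by $s$ does not preserve the order $\prec$ (it permutes $\Phi^{+}$ only away from $\alpha_s$), so this is precisely the point where a formal argument breaks down and where the combinatorics of the root poset in type $A$ must be used, in the spirit of the explicit formulas $D_R(t_{ij})=\{s_i,s_{j-1}\}$ and $s_k\prec t_{ij}\iff i\leq k\leq j-1$. I also expect a genuine case analysis according to the descent behaviour of $s$ at $u$ and at $v$: the subcases $s\notin D_R(v)$ (which obstruct a direct peeling of $R_{u,v}$ and call for a different choice of $s$, or for the complementary branch) and $s\notin D_R(u)$ (which produces a two-term expansion) are the delicate ones, and in each of them minimality of the transported reflection has to be checked by hand, with Theorem \ref{GLPsymmetric} supplying the relations $ut\leq v$ and $u\leq vt$.

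Finally, it is worth recording why the hypothesis that $t$ be $\prec$-minimal cannot be dropped, which is exactly what makes the minimality-transfer step unavoidable: taking $u=e$ and $v=w_0$ the longest element of $A_2=S_3$, and the non-minimal reflection $t=(1\,3)=w_0$, one finds $R_{u,v}=q^{3}-2q^{2}+2q-1$ whereas $qR_{ut,vt}+(q-1)R_{u,vt}=q-1$. Thus the identity genuinely fails outside the minimal locus, and any correct induction must carry the minimality of the reflection along at every stage.
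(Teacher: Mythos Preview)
The paper does not prove Proposition~\ref{propRsimm} itself; it is quoted from \cite{TsukermanWilliams}. What the paper does prove is the generalization to all finite simply-laced groups (the proposition following Theorem~\ref{GLPfinite}), and that argument specializes to type $A$ in a way that is cleaner than your outline and sidesteps exactly the obstacle you flag.

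The key differences are two. First, the induction is on the rank of $t$ in the poset $(T,\preceq)$, not on $\ell(v)$; the base case $t\in S$ is the classical recursion \eqref{ricorsioneRpolinomi}. Second, and more importantly, the simple reflection $s$ is not chosen arbitrarily in $D_R(v)$ but is the specific one produced by Proposition~\ref{goodsDn} (in type $A$: if $t=s_{i,j}$ with $j>i+1$, take $s=s_{j-1,j}$). This $s$ satisfies $s\in D(t)$, $s^t=t^s=s_{i,j-1}\prec t$, and---crucially---$s\in D_R(u)\Leftrightarrow s\in D_R(v)$ (a consequence of Theorem~\ref{tt*Dn}). The last equivalence kills the case analysis you anticipate: one always has $R_{u,v}=R_{us,vs}$ and $R_{ut,vt}=R_{uts,vts}$ and $R_{u,vt}=R_{us,vts}$ simultaneously. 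The minimality of $t^s$ in $AD(us,vs)$ is established once and for all as part of Proposition~\ref{goodsDn}, so the ``minimality-transfer'' problem you correctly isolate is handled by choosing the right $s$ rather than by a per-case verification.

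Your plan is not wrong in spirit, but picking an arbitrary $s\in D_R(v)$ forces you into the branch $s\notin D_R(u)$, where the two-term expansion of $R_{u,v}$ does not line up with a single application of the inductive hypothesis, and where minimality of $t^s$ genuinely can fail. The paper's choice of $s$ is engineered precisely so that this branch never occurs. Your counterexample at the end is correct and makes the same point as the remark following Proposition~\ref{propRsimm}.
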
 As shown in \cite[Section 5]{TsukermanWilliams}, if $t\in AD(u,v)$ is any reflection such that $u\vartriangleleft ut \leqslant v$ and $u\leqslant vt \vartriangleleft v$, the result of the proposition could be not true.

\section{Type D}\label{typeD}

As explained in the introduction, some properties needed in the proof of our main result are proved using the classification of finite Coxeter groups. In this section we examine in detail the case of Weyl groups of type $D_n$ and show all the preliminary results that will be needed to prove that the generalized lifting property holds for every interval of $D_n$.

For any integer $n\geq 1$ the group $D_n$ can be identified with the set of signed permutations with an even number of negative entries
(see \cite[Chapter 8]{bjornerbrenti} for more details). If $w\in D_n$ we write $w=[w(1),\ldots,w(n)]$ and call this the window notation of $w$.
A set of Coxeter generators is $S = \{s_{i,i+1}|1\leqslant i \leqslant n-1\} \cup \{t_{1,2}\}$ and the set of reflections of
the Coxeter system $(D_n,S)$ is
\begin{equation*}
  T=\{s_{i,j}|1\leqslant i<j \leqslant n\} \cup \{t_{i,j}|1\leqslant i<j \leqslant n\},
\end{equation*} where, as elements of $D_n$ in the window notation,
\begin{eqnarray}
\label{riflessioniDs}  s_{i,j} &=& [1,2,...,i-1,j,i+1,...,j-1,i,j+1,...,n], \\
\label{riflessioniDt}  t_{i,j} &=& [1,2,...,i-1,-j,i+1,...,j-1,-i,j+1,...,n],
\end{eqnarray} for all $1\leqslant i<j \leqslant n$.
For notational convenience, if $i>j$ we also let $s_{i,j}:=s_{j,i}$ and $t_{i,j}:=t_{j,i}$.
For a proof of the next proposition see \cite[Proposition 8.2.1]{bjornerbrenti}.
\begin{prop} \label{lunghezzaD} Let $w\in D_n$ and $i<j$; then $s_{i,j}\in D(w)$ if and only if $w(i)>w(j)$ and $t_{i,j}\in D(w)$ if and only if $w(i)+w(j)<0$. In particular
\[\ell(w)=|\{(i,j)\in [n]\times [n]:i<j,w(i)>w(j)\}|+|\{(i,j)\in [n]\times [n]:i<j,w(i)+w(j)<0\}|.\]
\end{prop}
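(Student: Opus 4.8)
The plan is to prove both assertions at once through the statistic $\ell'(w):=\mathrm{inv}(w)+\mathrm{nsp}(w)$, where $\mathrm{inv}(w):=|\{(i,j):i<j,\ w(i)>w(j)\}|$ and $\mathrm{nsp}(w):=|\{(i,j):i<j,\ w(i)+w(j)<0\}|$, so that the claimed length formula is exactly $\ell=\ell'$. First I would show $\ell=\ell'$, and then identify $D(w)$ with $E(w):=\{s_{i,j}:w(i)>w(j)\}\cup\{t_{i,j}:w(i)+w(j)<0\}$ (the union over $i<j$). Since $\ell(w)=|D(w)|$ by \eqref{lunghezzaw} and $|E(w)|=\mathrm{inv}(w)+\mathrm{nsp}(w)=\ell'(w)$, once $\ell=\ell'$ is known it will suffice to prove the single inclusion $E(w)\subseteq D(w)$: the two finite sets then coincide by cardinality, which yields all the descent criteria simultaneously.

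For $\ell=\ell'$ I would compute the effect on $\ell'$ of right multiplication by a generator. In window notation $ws_{i,i+1}$ only exchanges the entries in positions $i$ and $i+1$: this leaves every pair-sum unchanged, so $\mathrm{nsp}$ is constant, and flips only the inversion of the pair $(i,i+1)$, whence $\ell'(ws_{i,i+1})=\ell'(w)\pm1$ with the sign negative exactly when $w(i)>w(i+1)$. For $t_{1,2}$ the window becomes $[-w(2),-w(1),w(3),\dots]$, and a direct check shows that for each $m\geq3$ the joint contribution of the pairs $(1,m)$ and $(2,m)$ to $\mathrm{inv}+\mathrm{nsp}$ is unchanged, so only the pair $(1,2)$ survives and $\ell'(wt_{1,2})=\ell'(w)\pm1$, negative exactly when $w(1)+w(2)<0$. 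Thus $\ell'$ changes by exactly $\pm1$ along each generator, which forces $\ell'\leq\ell$; conversely, if $w\neq e$ then some generator strictly decreases $\ell'$, for otherwise the window would be strictly increasing with $w(1)+w(2)\geq0$, and then $w$ has no negative entry (if it had one it would have at least two, the number of negatives being even, forcing $w(1)+w(2)<0$), so $w=e$. Descending to $e$ along $\ell'$-decreasing generators gives $\ell\leq\ell'$, hence equality; note that these two generator computations already are the descent criteria for $s_{i,i+1}$ and $t_{1,2}$.

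To reach arbitrary reflections I would, using $\ell=\ell'$, determine the sign of $\ell'(wt)-\ell'(w)$ by splitting the change over the positions $m\notin\{i,j\}$. For $t=s_{i,j}$ one again finds $\mathrm{nsp}$ invariant, and the inversion change is the term of the pair $(i,j)$, of sign $\mathrm{sgn}(w(j)-w(i))$, plus terms $\mathrm{sgn}(w(j)-w(m))+\mathrm{sgn}(w(m)-w(i))\in\{-2,0,2\}$ for $i<m<j$, which carry that same sign or vanish; hence $s_{i,j}\in D(w)$ iff $w(i)>w(j)$. For $t=t_{i,j}$, with $(a,b):=(w(i),w(j))$ replaced by $(-b,-a)$, the same bookkeeping yields the pair term $\mathrm{sgn}(a+b)$, the terms $2\bigl(\mathrm{sgn}(a)[\,|w(m)|<|a|\,]+\mathrm{sgn}(b)[\,|w(m)|<|b|\,]\bigr)$ for $m<i$, the terms $2\bigl([\,-a<w(m)<b\,]-[\,b<w(m)<-a\,]\bigr)$ for $i<m<j$, and $0$ for $m>j$. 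The crucial observation is that each of these summands has sign $\mathrm{sgn}(a+b)$ or is zero, so they reinforce the pair term and $\mathrm{sgn}(\ell'(wt_{i,j})-\ell'(w))=\mathrm{sgn}(a+b)$; this is precisely the inclusion $E(w)\subseteq D(w)$ for the reflections $t_{i,j}$.

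I expect the last point to be the main obstacle: for a non-adjacent $t_{i,j}$ the change in $\ell'$ genuinely mixes $\mathrm{inv}$ and $\mathrm{nsp}$ and, unlike the $s_{i,j}$ case, receives nonzero contributions from the positions $m<i$ lying entirely outside $[i,j]$, so the sign cannot be read off a single pair. The technical heart is thus the sign-alignment claim, which will need a short case analysis on the signs and relative magnitudes of $a$, $b$ and $w(m)$. As an alternative that sidesteps this global count, one could instead induct on $\ell(w)$ through the inversion-set recursion $D(w)=\{s\}\sqcup s\,D(ws)\,s$, valid for any right descent $s$ of $w$, since conjugation by $s$ permutes the $s_{i,j}$ and $t_{i,j}$ compatibly with the combinatorial conditions and $ws$ differs from $w$ by the corresponding elementary change of window.
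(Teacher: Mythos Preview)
The paper does not supply its own proof of this proposition; it simply refers the reader to \cite[Proposition~8.2.1]{bjornerbrenti}. Your argument is therefore not being compared against an in-paper proof but stands as an independent verification, and it is correct. The strategy---first establishing $\ell=\ell'$ via the generator calculus, then identifying $D(w)$ with $E(w)$ by cardinality using \eqref{lunghezzaw}---is the standard one. The computations for $s_{i,i+1}$ and $t_{1,2}$ are accurate (for the latter, the key is that the inv-contribution and the nsp-contribution of the pairs $(1,m),(2,m)$ literally swap), as is the observation that a nonidentity element with increasing window and $w(1)+w(2)>0$ would, by parity of the number of negatives, have to be $e$. Your sign-alignment analysis for $t_{i,j}$ is also right: for $m<i$ the term $2\bigl(\mathrm{sgn}(a)[|w(m)|<|a|]+\mathrm{sgn}(b)[|w(m)|<|b|]\bigr)$ has the sign of $a+b$ (in the mixed-sign case compare $|a|$ and $|b|$), and for $i<m<j$ the interval $(-a,b)$ is nonempty exactly when $a+b>0$, which forces the claimed sign.

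One minor remark: your computation in fact delivers the full equivalence $\mathrm{sgn}\bigl(\ell(wt)-\ell(w)\bigr)=\mathrm{sgn}(w(i)+w(j))$ (respectively $\mathrm{sgn}(w(j)-w(i))$), not merely the inclusion $E(w)\subseteq D(w)$; the cardinality step is a tidy way to package the conclusion but is, strictly speaking, superfluous once you have the sign identity. The alternative inductive route you mention through $D(w)=\{s\}\sqcup s\,D(ws)\,s$ would also work and is closer in spirit to how the paper handles the conjugation action later (cf.\ Proposition~\ref{twistDcor2}).
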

For example, $\ell(s_{i,j})=2(j-i)-1$ and $\ell(t_{i,j})=2(j+i-3)+1$.

In a classical realization of $D_n $ as a finite reflection group in $GL(\mathbb{R}^n)$, if we denote by $\alpha_{i,j}$ the positive root corresponding  to the reflection $s_{i,j}$ and by $\beta_{i,j}$ the positive root corresponding to the reflection $t_{i,j}$, we have   $\alpha_{i,j}=e_i-e_{j}$ and  $\beta_{i,j}=-e_i-e_j$ for all $1\leqslant i<j \leqslant n$, where
 $\{e_1,...,e_n\}$ is the canonical basis of $\mathbb{R}^n$. We have
\begin{eqnarray}
\label{positiverootsDs}  \alpha_{i,j} &=& \sum \limits_{k=i}^{j-1}\alpha_{k,k+1}, \\
\label{positiverootsDt}  \beta_{i,j} &=& \beta_{1,2} + \sum \limits_{k=1}^{i-1}\alpha_{k,k+1}+\sum \limits_{k=2}^{j-1}\alpha_{k,k+1},
\end{eqnarray} for all $1\leqslant i<j \leqslant n$.
By \eqref{positiverootsDs} and \eqref{positiverootsDt} we obtain the following description of the poset $(T,\preceq)$.
\begin{prop} \label{posetTD}
  For the Coxeter system $(D_n,S)$ the relations in the poset $(T,\preceq)$ are the following:
  \begin{enumerate}
 \item $s_{i,j}\preceq s_{h,k}$ if and only if $h\leqslant i$ and $j\leqslant k$;
 \item $s_{i,j}\prec t_{h,k}$ if and only if $(i,h)\neq (1,1)$ and $j\leqslant k$;
 \item $t_{i,j}\preceq t_{h,k}$ if and only if $i\leqslant h$ and $j\leqslant k$;
 \item $t_{i,j}\not \prec s_{h,k}$,
\end{enumerate}
for all $1\leqslant i<j \leqslant n$, $1\leqslant h<k \leqslant n$.
\end{prop}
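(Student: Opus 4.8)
The plan is to compute everything in the basis of simple roots $\{\alpha_{k,k+1}:1\le k\le n-1\}\cup\{\beta_{1,2}\}$ of $V$ and to read off each of the four relations as a componentwise inequality between coefficient vectors. The decisive structural feature, visible from \eqref{positiverootsDs} and \eqref{positiverootsDt}, is that every $\alpha_{i,j}$ lies in the span of the $\alpha_{k,k+1}$ (its $\beta_{1,2}$-coordinate is $0$), with coefficients equal to the indicator of the contiguous block $\{i,i+1,\dots,j-1\}$, whereas every $\beta_{h,k}$ has $\beta_{1,2}$-coordinate exactly $1$. This dichotomy disposes of (4) at once: since $\alpha_{h,k}-\beta_{i,j}$ has $\beta_{1,2}$-coordinate $-1<0$, it can never be a non-negative combination of simple roots, so $t_{i,j}\not\prec s_{h,k}$.

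For (1) I would observe that $\alpha_{h,k}-\alpha_{i,j}$ is a vector with entries in $\{0,\pm1\}$ supported on the $\alpha_{k,k+1}$, and that it has non-negative coefficients if and only if the block $\{i,\dots,j-1\}$ of $\alpha_{i,j}$ is contained in the block $\{h,\dots,k-1\}$ of $\alpha_{h,k}$, i.e. if and only if $h\le i$ and $j\le k$. For (2), the $\beta_{1,2}$-coordinate of $\beta_{h,k}-\alpha_{i,j}$ equals $1$ (and the two reflections are of different type, so $\prec$ and $\preceq$ coincide here), and the condition becomes that the support $\{i,\dots,j-1\}$ of $\alpha_{i,j}$ be contained in the set of indices $l$ at which the $\alpha_{l,l+1}$-coordinate of $\beta_{h,k}$ is at least $1$. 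Reading off that coordinate from \eqref{positiverootsDt} shows it is positive exactly on $\{2,\dots,k-1\}$, together with $l=1$ precisely when $h\ge2$; requiring $\{i,\dots,j-1\}$ to sit inside this set yields $j\le k$ together with the extra demand $i\ge2$ in the single case $h=1$, which is exactly the condition $(i,h)\neq(1,1)$ and $j\le k$.

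The substantive case is (3), and this is where I expect the main obstacle to lie. Reading off from \eqref{positiverootsDt} the $\alpha_{l,l+1}$-coordinate of $\beta_{p,q}$ as the superposition of a block of $1$'s on $\{1,\dots,p-1\}$ and a further block of $1$'s on $\{2,\dots,q-1\}$, one sees that the coefficient vector of $\beta_{p,q}$ takes the values $0,1,2$; consequently the componentwise inequality $\beta_{i,j}\preceq\beta_{h,k}$ does \emph{not} reduce to a single interval containment, and one must rule out the possibility that the two overlapping blocks compensate for one another. Sufficiency of $i\le h$ and $j\le k$ is clear, since then each of the two blocks of $\beta_{i,j}$ sits inside the corresponding block of $\beta_{h,k}$. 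For necessity I would evaluate the coordinate inequality at carefully chosen positions: testing at $l=j-1$ forces $j\le k$ (the coordinate of $\beta_{i,j}$ there equals $1$, whereas that of $\beta_{h,k}$ vanishes as soon as $j>k$), and then, using $j\le k$, testing at $l=i-1$ (respectively at $l=1$ for the small value $i=2$) forces $i\le h$. The degenerate indices $i=1$ and $j=2$, where the $\alpha$-part of $\beta_{i,j}$ is trivial, are handled separately and are routinely seen to be consistent with the stated inequalities.
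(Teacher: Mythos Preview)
Your proposal is correct and follows precisely the approach the paper indicates: the paper simply asserts that the description of $(T,\preceq)$ is obtained from the decompositions \eqref{positiverootsDs} and \eqref{positiverootsDt}, and your argument is a careful componentwise verification of exactly that. The case analysis you carry out for item~(3), in particular the tests at $l=j-1$ and $l=i-1$, is sound and fills in the details the paper omits.
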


For any set $I=\{i,j\}$ and $x\in I$, define ${\overline x}^I=i$ if $x=j$ and ${\overline x}^I=j$ otherwise. The proof of the following result is a simple verification and is therefore omitted.

\begin{prop} \label{twistDcor2} Let $r\in T$ and $i,j\in [n]$, $i \neq j$, $I=\{i,j\}$. Then
\begin{equation*}
 r^{s_{i,j}}=\begin{cases}
              s_{h,{\overline x}^I}& \textrm{if }r=s_{h,x},\,x\in I,\,h\notin I\\
              t_{h,{\overline x}^I}& \textrm{if }r=t_{h,x},\,x\in I,\,h\notin I\\
              r& \textrm{otherwise.}
             \end{cases}
\end{equation*}
and
\begin{equation*}
 r^{t_{i,j}}=\begin{cases}
              t_{h,{\overline x}^I}& \textrm{if }r=s_{h,x},\,x\in I,\,h\notin I\\
              s_{h,{\overline x}^I}& \textrm{if }r=t_{h,x},\,x\in I,\,h\notin I\\
              r& \textrm{otherwise.}
             \end{cases}
\end{equation*}
%
\end{prop}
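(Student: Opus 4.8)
The plan is to carry out the conjugation directly inside the ambient group of signed permutations, in which $D_n$ and all of its reflections live. Recall that a signed permutation $w$ acts on $\{\pm 1,\ldots,\pm n\}$ subject to $w(-k)=-w(k)$, so that $w$ is determined by its window $[w(1),\ldots,w(n)]$. Reading off \eqref{riflessioniDs} and \eqref{riflessioniDt}, the reflection $s_{i,j}$ is precisely the involution interchanging the signed letters $i$ and $j$ (and therefore $-i$ and $-j$) and fixing all others, while $t_{i,j}$ is the involution interchanging the signed letters $i$ and $-j$ (and therefore $-i$ and $j$) and fixing all others. Thus each reflection is encoded by the unordered pair of signed letters it swaps, with the convention that a swap of $a$ and $b$ automatically swaps $-a$ and $-b$.

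The whole computation rests on the elementary conjugation principle: if $g$ is a signed permutation and $r$ interchanges the signed letters $a$ and $b$, then $g r g^{-1}$ interchanges $g(a)$ and $g(b)$. Since $s_{i,j}$ and $t_{i,j}$ are involutions, I may take $g$ equal to its own inverse and simply evaluate $g$ on the two letters moved by $r$.

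First I would treat $g=s_{i,j}$, writing $I=\{i,j\}$. If $r$ moves no letter of $I$, then $s_{i,j}$ fixes both letters swapped by $r$ and $r^{s_{i,j}}=r$; if $r$ moves both $i$ and $j$, then its two moved letters are exactly $i$ and $j$ and one checks at once that $r$ is fixed. In the remaining case $r$ involves exactly one letter $x\in I$ together with some $h\notin I$, and since $s_{i,j}$ fixes $\pm h$ and sends $x\mapsto\overline{x}^I$ \emph{without changing signs}, the type of the reflection ($s$ or $t$) is preserved while the index $x$ is relabeled to $\overline{x}^I$; this is the first displayed formula. Repeating the analysis for $g=t_{i,j}$, the disjoint- and full-support cases again produce the fixed reflection, but in the mixed case the map $t_{i,j}$ sends $x\mapsto\overline{x}^I$ \emph{and flips a sign}. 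Tracking this sign shows that an $s$-type reflection is sent to a $t$-type reflection and vice versa, with surviving index again $\overline{x}^I$, which is exactly the second displayed formula.

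The argument is purely routine, so the only genuine obstacle is the sign bookkeeping. The conceptual point worth isolating is that conjugation by $s_{i,j}$, having only positive entries on $I$, preserves the distinction between the two families of reflections, whereas conjugation by $t_{i,j}$ introduces exactly one sign on the relevant letter and therefore interchanges the two families. Once this is noted, each of the cases collapses to a single evaluation of $g$ on the two letters moved by $r$, immediate from the explicit action recalled above.
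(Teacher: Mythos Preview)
Your argument is correct and is exactly the kind of direct computation the paper has in mind: the paper simply declares this result ``a simple verification'' and omits the proof entirely. Your use of the conjugation principle for transpositions in the signed permutation group is the natural way to carry out that verification, and your sign bookkeeping is right; the only slightly loose phrase is ``its two moved letters are exactly $i$ and $j$'' in the full-support case (for $r=t_{i,j}$ the encoding pair is $\{i,-j\}$ rather than $\{i,j\}$), but the conclusion there is immediate either way.
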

If $t\in T$, the set $D(t)$ can be completely described.
\begin{prop} \label{twistD}
  Let $1\leqslant i<j \leqslant n$ and $I:=\{i,j\}$; then
  \begin{equation*}
    D(t)\setminus \{t\}=\begin{cases}
           \{s_{h,x}|x\in I, i<h<j\}, & \mbox{if $t=s_{i,j}$}, \\
           \{s_{h,x}, t_{h,{\overline x}^I}|x\in I, h\not \in I,h<x\}, & \mbox{if $t=t_{i,j}$}.
         \end{cases}
  \end{equation*}
\end{prop}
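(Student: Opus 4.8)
The plan is to read off $D(t)$ directly from its definition $D(t)=\{r\in T : tr<t\}$ by means of the purely combinatorial length criterion of Proposition \ref{lunghezzaD}. Recall that for a reflection $r=s_{h,k}$ or $r=t_{h,k}$ (with $h<k$) and any $w\in D_n$, membership of $r$ in $D(w)$ depends only on the window entries of $w$: one has $s_{h,k}\in D(w)$ iff $w(h)>w(k)$, and $t_{h,k}\in D(w)$ iff $w(h)+w(k)<0$. I would apply this with $w$ taken to be the reflection $t$ itself, viewed as an element of $D_n$, and then obtain both descriptions by a position-by-position inspection of the window notation \eqref{riflessioniDs}--\eqref{riflessioniDt}.

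First I would treat $t=s_{i,j}$, whose window has all \emph{positive} entries, with $s_{i,j}(i)=j$, $s_{i,j}(j)=i$ and $s_{i,j}(p)=p$ for $p\notin I$. Since every entry is positive, $s_{i,j}(h)+s_{i,j}(k)>0$ for all $h<k$, so no reflection of type $t_{h,k}$ can lie in $D(s_{i,j})$; this forces $D(s_{i,j})\setminus\{s_{i,j}\}$ to consist of $s$-type reflections only. The inversions $w(h)>w(k)$, $h<k$, are created solely by the large value $j$ in position $i$ and the small value $i$ in position $j$, and one checks that they are exactly the pairs $(i,k)$ with $i<k<j$ and the pairs $(h,j)$ with $i<h<j$ (the pair $(i,j)$ giving $t$ itself). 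This is precisely $\{s_{h,x}\mid x\in I,\ i<h<j\}$.

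Next I would treat $t=t_{i,j}$, whose window coincides with the identity except for the two \emph{negative} entries $t_{i,j}(i)=-j$ and $t_{i,j}(j)=-i$, respectively the smallest and second–smallest values. For the $s$-type reflections I would list the pairs $h<k$ with $t_{i,j}(h)>t_{i,j}(k)$: an inversion arises only when a positive entry stands to the left of one of the two negative entries, giving the pairs $(h,i)$ with $h<i$ and the pairs $(h,j)$ with $h<j$, $h\neq i$. For the $t$-type reflections I would list the pairs $h<k$ with $t_{i,j}(h)+t_{i,j}(k)<0$: a negative sum occurs exactly for $(i,k)$ with $i<k<j$, for $(h,i)$ with $h<i$, and for $(h,j)$ with $h<i$ (the pair $(i,j)$ again giving $t$). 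Rewriting these with the convention $t_{a,b}=t_{b,a}$ and the notation $\overline{x}^I$ (so $\overline{i}^I=j$ and $\overline{j}^I=i$), the $s$-type part becomes $\{s_{h,x}\mid x\in I,\ h\notin I,\ h<x\}$ and the $t$-type part becomes $\{t_{h,\overline{x}^I}\mid x\in I,\ h\notin I,\ h<x\}$, which is the claimed formula.

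The argument is entirely elementary, so there is no substantial obstacle; the only delicate point is bookkeeping. One must reorder the indices of each reflection before invoking Proposition \ref{lunghezzaD}, and then correctly repackage the index pairs into the compact $(x,h)$-parametrisation of the statement, noting that $h<i$ automatically yields $h\notin I$ whereas the case $x=j$ genuinely needs the extra constraint $h\neq i$. A final cardinality check rules out any double counting or omission: the lists have $2(j-i-1)=\ell(s_{i,j})-1$ elements when $t=s_{i,j}$ and $2(i+j-3)=\ell(t_{i,j})-1$ elements when $t=t_{i,j}$, in agreement with $|D(t)\setminus\{t\}|=\ell(t)-1$ coming from \eqref{lunghezzaw} and $t\in D(t)$.
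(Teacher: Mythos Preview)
Your argument is correct: applying Proposition~\ref{lunghezzaD} directly to the window notation of $t$ and running through all index configurations does give the claimed sets, and your cardinality sanity check is a nice way to catch any slips in the bookkeeping.

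The route, however, differs from the one taken in the paper. There the $t_{i,j}$ case is not handled by an exhaustive sign/inversion analysis but by first eliminating half of the candidates structurally: Corollary~\ref{commrefl} rules out every reflection commuting with $t_{i,j}$ (those whose index set misses $I$, together with $s_{i,j}$), and then Lemma~\ref{lemmainvarianza} combined with the explicit conjugation formulas of Proposition~\ref{twistDcor2} shows that $s_{h,x}\in D(t_{i,j})$ if and only if its conjugate $t_{h,\overline{x}^I}\in D(t_{i,j})$. This leaves a single family $s_{h,x}$ with $x\in I$, $h\notin I$ to test via Proposition~\ref{lunghezzaD}. Your approach is more self-contained, relying on nothing beyond Proposition~\ref{lunghezzaD}, at the price of a longer case enumeration; the paper's approach trades that enumeration for the preliminary machinery already set up, and in doing so highlights the $r\leftrightarrow r^t$ symmetry of $D(t)\setminus\{t\}$ that is used again in Theorem~\ref{tt*Dn}.
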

\begin{proof}
We prove the result for $t_{i,j}$, the case $s_{i,j}$ being similar and simpler.  By Corollary \ref{commrefl}, Lemma \ref{lemmainvarianza} and Proposition \ref{twistDcor2} it is enough to show that for $x\in \{i,j\}$ and $h\notin \{i,j\}$ we have $s_{h,x}\in D(t_{i,j})$ if and only if $h<x$. But this follows immediately from Proposition \ref{lunghezzaD}.
\end{proof}

An immediate consequence of the description of the poset $(T,\preceq)$ in Proposition \ref{posetTD} and Proposition \ref{twistD} is the following observation.
\begin{cor} \label{twistDcor} Let $t\in T$; then $r\prec t$ for all $r\in D(t)\setminus \{t\}$.
\end{cor}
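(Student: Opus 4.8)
The plan is to prove the corollary by directly combining the explicit description of $D(t)\setminus\{t\}$ furnished by Proposition \ref{twistD} with the explicit list of relations of the poset $(T,\preceq)$ furnished by Proposition \ref{posetTD}. Since both ingredients are completely explicit, the argument reduces to a short case analysis according to the type of $t$, and no auxiliary construction is needed.

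First I would treat the case $t=s_{i,j}$ with $i<j$. Here Proposition \ref{twistD} tells us that every $r\in D(t)\setminus\{t\}$ has the form $s_{h,x}$ with $x\in\{i,j\}$ and $i<h<j$. After reordering the indices so that the smaller one comes first (recall $s_{a,b}=s_{b,a}$), this is $s_{i,h}$ when $x=i$ and $s_{h,j}$ when $x=j$; in both cases relation (1) of Proposition \ref{posetTD} immediately yields $r\preceq s_{i,j}$, and since $r\neq t$ we obtain $r\prec t$.

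Next I would treat the case $t=t_{i,j}$ with $i<j$ and $I=\{i,j\}$. By Proposition \ref{twistD} the elements of $D(t)\setminus\{t\}$ are the reflections $s_{h,x}$ and $t_{h,\overline{x}^I}$ with $x\in I$, $h\notin I$ and $h<x$. For the reflections of $s$-type I would invoke relation (2) of Proposition \ref{posetTD}: the inequality on the larger indices reduces to $x\leq j$, which holds because $x\in\{i,j\}$, while the forbidden configuration $(1,1)$ cannot occur precisely because $h\neq i$. For the reflections of $t$-type I would invoke relation (3), after putting the indices in increasing order; the two required inequalities then follow from $h<x$ together with $x\in\{i,j\}$. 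In every instance $r\neq t$, so $\preceq$ upgrades to $\prec$.

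The only point that requires attention, rather than a genuine obstacle, is the bookkeeping of the index order: the relations in Proposition \ref{posetTD} are stated for reflections written with increasing indices, whereas the parametrization in Proposition \ref{twistD} produces reflections whose two indices need not be in increasing order. One must therefore normalize before applying the relations, and in the $t$-type part of the case $t=t_{i,j}$ separate the subcases $h<i$ and $i<h<j$. Once this bookkeeping is carried out the verification is entirely routine, which is why the result is stated as an immediate consequence of the two preceding propositions.
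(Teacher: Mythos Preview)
Your proof is correct and follows exactly the route the paper indicates: the paper merely states that the corollary is an immediate consequence of Propositions \ref{posetTD} and \ref{twistD}, and your case analysis faithfully carries out that verification. Your remark about normalizing the index order before invoking Proposition \ref{posetTD} is precisely the only bookkeeping needed, and your handling of the forbidden pair $(1,1)$ via $h\notin I$ is the clean way to dispose of relation (2).
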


The following is a key result used in the proof of the generalized lifting property in \S\ref{sectionGLP}.
\begin{thm} \label{tt*Dn}
  Let $u,v\in D_n$ be such that $AD(u,v)\neq \emptyset$, $t$  be a minimal element in $AD(u,v)$ and $r\in D(t)\setminus \{t\}$; then
  $$r\in A(u) \Leftrightarrow r\in A(v) \Leftrightarrow r^t \in D(v) \Leftrightarrow r^t \in D(u). $$
\end{thm}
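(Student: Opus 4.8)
The plan is to prove the four stated conditions equivalent by exhibiting a cyclic chain of implications
\[
  r^t\in D(u)\ \Longrightarrow\ r\in A(u)\ \Longrightarrow\ r\in A(v)\ \Longrightarrow\ r^t\in D(v)\ \Longrightarrow\ r^t\in D(u),
\]
so that each of the four statements implies the next and hence all are equivalent. Two of these arrows will come essentially for free from the minimality of $t$, while the other two encode a local geometric fact about the three reflections $r$, $r^t$ and $t$. Throughout I use that for every $w\in D_n$ one has the disjoint decomposition $T=A(w)\sqcup D(w)$, so that membership of a reflection in $A(w)$ is simply the negation of its membership in $D(w)$.

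First I would dispose of the two minimality arrows. Since $r\in D(t)\setminus\{t\}$, Lemma \ref{lemmainvarianza} gives $r^t\in D(t)\setminus\{t\}$ as well, and Corollary \ref{twistDcor} then yields $r\prec t$ and $r^t\prec t$. As $t$ is a minimal element of $AD(u,v)$ and $r\neq t\neq r^t$, it follows that neither $r$ nor $r^t$ lies in $AD(u,v)=A(u)\cap D(v)$. Unravelling $r\notin AD(u,v)$ gives $\neg\bigl(r\in A(u)\wedge r\in D(v)\bigr)$, that is $r\in A(u)\Rightarrow r\in A(v)$; unravelling $r^t\notin AD(u,v)$ gives $\neg\bigl(r^t\in A(u)\wedge r^t\in D(v)\bigr)$, which, using $r^t\in A(u)\Leftrightarrow r^t\notin D(u)$, reads $r^t\in D(v)\Rightarrow r^t\in D(u)$. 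These are precisely the second and fourth arrows of the chain, and they hold in any Coxeter group.

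The heart of the argument is the remaining pair of arrows, and here I would use the root realization of $D_n$. The key claim is the identity $\alpha_r+\alpha_{r^t}=\alpha_t$, valid for every $t\in T$ and every $r\in D(t)\setminus\{t\}$. I would prove it by the explicit description of $D(t)\setminus\{t\}$ in Proposition \ref{twistD} together with the conjugation rules of Proposition \ref{twistDcor2} and the root formulas \eqref{positiverootsDs}--\eqref{positiverootsDt}: in each of the few cases $t=s_{i,j}$ and $t=t_{i,j}$ the reflection $r$ and its twist $r^t$ form a pair whose positive roots add up to $\alpha_t$ (for instance $(e_i-e_h)+(e_h-e_j)=e_i-e_j$ and $(e_h-e_i)+(-e_h-e_j)=-e_i-e_j$). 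Granting this, fix for a given $w$ a point $p$ in the interior of the chamber $\mathcal C_w$; by the sign interpretation of $A(w)$ and $D(w)$ coming from the separation description of $D(w)$, one has $\rho\in A(w)\Leftrightarrow\langle p,\alpha_\rho\rangle>0$ and $\rho\in D(w)\Leftrightarrow\langle p,\alpha_\rho\rangle<0$ for every reflection $\rho$. Evaluating the identity at $p$ gives $\langle p,\alpha_t\rangle=\langle p,\alpha_r\rangle+\langle p,\alpha_{r^t}\rangle$. Taking $w=u$ with $t\in A(u)$ (so $\langle p,\alpha_t\rangle>0$): if $r^t\in D(u)$ then $\langle p,\alpha_{r^t}\rangle<0$, forcing $\langle p,\alpha_r\rangle>0$ and hence $r\in A(u)$, which is the first arrow. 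Taking $w=v$ with $t\in D(v)$ (so $\langle p,\alpha_t\rangle<0$): if $r\in A(v)$ then $\langle p,\alpha_r\rangle>0$, forcing $\langle p,\alpha_{r^t}\rangle<0$ and hence $r^t\in D(v)$, which is the third arrow.

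I expect the main obstacle to be establishing the additivity $\alpha_r+\alpha_{r^t}=\alpha_t$ in a way that is both correct and genuinely uses the type-$D$ geometry; conceptually it reflects that $r$, $r^t$ and $t$ all have equal root length, so that $t(\alpha_r)=-\alpha_{r^t}$ and the reflection formula collapses to $\alpha_{r^t}=\alpha_t-\alpha_r$. This is exactly the point where the simply-laced hypothesis enters, and one should expect the analogous statement to fail for reflections of different lengths, consistent with the eventual classification. Once the identity is in hand, the four implications close up into the cycle above and the theorem follows.
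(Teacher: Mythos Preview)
Your proof is correct. The overall structure matches the paper's: both use Lemma~\ref{lemmainvarianza} and Corollary~\ref{twistDcor} together with the minimality of $t$ to obtain the two ``free'' implications $r\in A(u)\Rightarrow r\in A(v)$ and $r^t\in D(v)\Rightarrow r^t\in D(u)$, and then establish the remaining two implications by a direct computation in the $D_n$ root system.

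Where you diverge is in that computation. The paper argues case by case in the window notation: for $t=s_{i,j}$ it manipulates inequalities among the entries $v_i,v_h,v_j$, and for $t=t_{i,j}$ it manipulates inequalities of the form $v_i+v_j<0$ and $u_i+u_j>0$. You instead isolate the single identity $\alpha_r+\alpha_{r^t}=\alpha_t$ for $r\in D(t)\setminus\{t\}$ and then read off both remaining implications from the sign of $\langle p,\alpha_t\rangle=\langle p,\alpha_r\rangle+\langle p,\alpha_{r^t}\rangle$ at a chamber point $p$. This is the same verification in disguise (the window-notation inequalities are exactly the coordinate forms of these inner products), but your packaging is cleaner and type-independent: the identity follows uniformly from $t(\alpha_r)=-\alpha_{r^t}$ and the fact that $2\langle\alpha_r,\alpha_t\rangle/\langle\alpha_t,\alpha_t\rangle=1$ in a simply-laced root system whenever $r\in D(t)\setminus\{t\}$ (equivalently, whenever $r$ and $t$ do not commute and $r\prec t$). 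In particular your argument transfers verbatim to $E_8$ and would replace the case analysis of Theorem~\ref{mainlemm} as well, whereas the paper repeats a parallel but separate computation there using the explicit hyperplane descriptions.
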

\begin{proof} First note that $r\in A(u) \Leftrightarrow r\in A(v)$ and $r^t\in D(u) \Leftrightarrow r^t \in D(v)$,
by the minimality of $t$, Lemma \ref{lemmainvarianza} and  Corollary \ref{twistDcor}.
 To prove the other implications there are two cases to consider.
\begin{enumerate}
  \item $t=s_{i,j}$: in this case we have $r=s_{h,x}$ for some $x\in \{i,j\}$ and $i<h<j$ by Proposition \ref{twistD}, and $r^t=s_{h,{\overline x}^I}$ by Corollary \ref{twistDcor2}.
If $r\in A(v)$, i.e. $v_j<v_x<v_h$ if $x=i$
and $v_h<v_x<v_i$ if $x=j$, then $s_{h,{\overline x}^I}\in D(v)$. If
$s_{h,{\overline x}^I}\in D(v)$ then $v_h>v_j>v_i$ if $x=i$ and $v_j>v_i>v_h$ if $x=j$, therefore $s_{h,x}\in A(v)$.
  \item $t=t_{i,j}$: in this case $u_i+u_j>0$, $v_i+v_j<0$ and we can assume $r=s_{h,x}$, where $x\in \{i,j\}$, $h\not \in \{i,j\}$ and $h<x$ by Propositions \ref{twistD} and Corollary \ref{twistDcor2}, since the statement for a reflection $r\in D(t)$ is equivalent to the same statement for $r^t$. By Corollary \ref{twistDcor2} we also have $r^t=t_{h,{\overline x}^I}$. Assume $r\in A(v)$; therefore $v_h<v_x$. So $v_h+v_{{\overline x}^I}<v_x+v_{{\overline x}^I}<0$, which implies $t_{h,{\overline x}^I}\in D(v)$. Finally, if $t_{h,{\overline x}^I}\in D(u)$ we have
  $u_h+u_{{\overline x}^I}<0$; since $u_i+u_j=u_x+u_{{\overline x}^I}>0$ we find $u_h<u_x$, i.e. $s_{h,x}\in A(u)$.
\end{enumerate}
\end{proof}

This last proposition of this section will be crucial for the inductive step in the proof of Theorem \ref{GLPfinite}.
\begin{prop}\label{goodsDn}
 Let $u,v\in D_n$ and $t$ be a minimal element in $AD(u,v)$. If $t\notin S$ then there exists $s\in S$ such that
 \begin{enumerate}
  \item $s\in D(t)$;
  \item $s^t$ is minimal in $AD(us,vs)$,
  \item $s^t=t^s$.
 \end{enumerate} Moreover $s\in D_R(v)$ if and only if $s\in D_R(u)$.
\end{prop}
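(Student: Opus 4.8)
The plan is to exhibit an explicit simple reflection $s$ and check the four requirements one at a time, the genuinely hard point being the minimality asserted in (2).

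First I would locate a simple $s\in D(t)$. Since $t\notin S$, Proposition \ref{twistD} describes $D(t)\setminus\{t\}$ explicitly: if $t=s_{i,j}$ with $j>i+1$ then $s_{i,i+1}\in D(t)\setminus\{t\}$, while if $t=t_{i,j}$ with $(i,j)\neq(1,2)$ then one of $s_{i-1,i}$ (when $i\geq 2$), $s_{j-1,j}$ (when $j>i+1$), or $t_{1,2}$ (when $i=2$) lies in $D(t)\setminus\{t\}$; a short check on the ranges of $i,j$ shows that at least one such simple reflection always exists, giving (1). For (3) I would feed the chosen pair $s,t$ into the conjugation formulas of Proposition \ref{twistDcor2}: in each case both $s^t=tst$ and $t^s=sts$ evaluate to the same reflection (for instance $s_{i,i+1}^{\,s_{i,j}}=s_{i+1,j}=s_{i,j}^{\,s_{i,i+1}}$, and the analogous $t_{i-1,j}$, $t_{i,j-1}$, $s_{1,j}$ in the $t_{i,j}$ cases), so that $s^t=t^s$.

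The ``moreover'' clause is then immediate from Theorem \ref{tt*Dn}. Applying it with $r=s\in D(t)\setminus\{t\}$ gives $s\in A(u)\Leftrightarrow s\in A(v)$; since $s$ is simple, $s\in A(w)$ is equivalent to $s\notin D_R(w)$, whence $s\in D_R(u)\Leftrightarrow s\in D_R(v)$.

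For (2) I would first settle the membership $s^t\in AD(us,vs)$ abstractly. Because $D(s)=\{s\}$ and $s^t\neq s$, the equivalence of Proposition \ref{crucialprop} (applied with $s$ as conjugating reflection, in the case opposite to its condition (1)) yields $s(H_{s^t}^+)=H_t^+$; repeating the geometric argument in the proof of that proposition then shows that for every $w$ one has $s^t\in A(w)\Leftrightarrow t\in A(ws)$ and $s^t\in D(w)\Leftrightarrow t\in D(ws)$, where I use $(s^t)^s=t$. Taking $w=us$ and $w=vs$ and recalling $t\in A(u)\cap D(v)$ gives $s^t\in A(us)\cap D(vs)$. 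The very same equivalences, applied to an arbitrary reflection $r'\neq s$, set up a bijection $r'\mapsto (r')^s$ between $AD(us,vs)$ and $AD(u,v)$ sending $s^t$ to $t$; here every element of $AD(us,vs)$ is automatically $\neq s$, since $s\notin AD(us,vs)$ in both sign cases allowed by the ``moreover''.

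It remains to prove minimality, and this I expect to be the main obstacle. By Lemma \ref{lemmainvarianza} and Corollary \ref{twistDcor} we have $s^t\in D(t)\setminus\{t\}$, hence $s^t\prec t$. Suppose $r'\in AD(us,vs)$ with $r'\prec s^t$; through the bijection above $(r')^s\in AD(u,v)$, so it suffices to show $(r')^s\prec t$ in order to contradict the minimality of $t$. Thus everything reduces to the claim that conjugation by $s$ maps $\{r\in T:\ r\prec s^t\}$ into $\{r\in T:\ r\prec t\}$. I would attack this through the root data: as $s$ is simple it permutes $\Phi^+\setminus\{\alpha_s\}$ and sends $\alpha_{s^t}$ to $\alpha_t$, so writing $\alpha_{s^t}-\alpha_{r'}=\sum_{a\in S}c_a\alpha_a$ with all $c_a\geq 0$ and applying $s$ gives $\alpha_t-\alpha_{(r')^s}=\sum_{a\neq s}c_a\alpha_a+\bigl(\sum_{a\sim s}c_a-c_s\bigr)\alpha_s$; the inequality $(r')^s\prec t$ therefore comes down to $\sum_{a\sim s}c_a\geq c_s$. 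Verifying this last inequality is the delicate step, and is exactly where the explicit positive roots \eqref{positiverootsDs}--\eqref{positiverootsDt} and the poset description of Proposition \ref{posetTD} enter; if the root computation proves unwieldy I would instead establish minimality by a direct case analysis in the window notation using Proposition \ref{lunghezzaD}, separating the cases $t=s_{i,j}$ and $t=t_{i,j}$.
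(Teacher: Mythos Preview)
Your overall plan is correct and in fact more conceptual than the paper's argument. The paper makes essentially the same explicit choices of $s$ (it takes $s_{j-1,j}$ rather than your $s_{i,i+1}$ when $t=s_{i,j}$, but either works), checks $s^t=t^s$ and $s^t\in AD(us,vs)$ directly in window notation, and then verifies the minimality of $s^t$ by a bare-hands case check using Proposition~\ref{posetTD}---precisely the fallback you describe at the end. It does not set up the bijection $r'\mapsto (r')^s$ between $AD(us,vs)$ and $AD(u,v)$, nor does it reduce minimality to a root inequality.

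The step you flag as ``delicate'' is actually automatic in simply-laced type, so you need not retreat to cases. Since $s\in D(t)\setminus\{t\}$, Corollary~\ref{commrefl} gives $st\neq ts$, and by Corollary~\ref{twistDcor} we have $s^t\prec t$; because reflecting a positive root $\alpha_t\neq\alpha_s$ in $\alpha_s$ changes only its $\alpha_s$-coordinate, and by $\pm1$ in the simply-laced case, the relation $s^t\prec t$ forces $\alpha_t=\alpha_{s^t}+\alpha_s$. Likewise, for any $r'\neq s$ one has $\alpha_{(r')^s}=\alpha_{r'}+m\,\alpha_s$ with $m\in\{-1,0,1\}$. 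Hence
\[
\alpha_t-\alpha_{(r')^s}=(\alpha_{s^t}-\alpha_{r'})+(1-m)\,\alpha_s=\sum_{a\neq s}c_a\,\alpha_a+\bigl(c_s+1-m\bigr)\alpha_s,
\]
and $c_s+1-m\geq 0$ since $c_s\geq 0$ and $m\leq 1$. Thus $(r')^s\preceq t$; equality would force $r'=t^s=s^t$, contradicting $r'\prec s^t$, so $(r')^s\prec t$ strictly and the minimality of $t$ in $AD(u,v)$ is violated. This argument is type-free within the finite simply-laced setting and would equally dispatch Proposition~\ref{goodsE8} once a simple $s\in D(t)$ with $s^t=t^s$ is exhibited, whereas the paper repeats the case analysis in type $E_8$.
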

\begin{proof}
Let $1\leqslant i<j\leqslant n$. If $t=s_{i,j}$ choose $s=s_{j-1,j}$; then $s\in D(t)$ and $tst=s_{i,j-1}=sts$.
Since $(us)_i=u_i$, $(us)_{j-1}=u_j$ we have $s_{i,j-1}\in A(us)$ and similarly we can show that $s_{i,j-1}\in D(vs)$. Assume there exists $r\in AD(u,v)$ such that $r\prec s_{i,j-1}$. By the description of the partial order $\prec $ we have that $r=s_{h,k}$ with $i\leq h<k\leq j-1$. If $k<j-1$ then $r\in AD(u,v)$ contradicting the minimality of $t$. If $k=j-1$ then $rs=s_{h,j}\in AD(u,v)$ contradicting again the minimality of $t$.

  Now let $t=t_{i,j}$. If $i>1$ then $s=s_{i-1,i} \in D(t)$ and $tst=t_{i-1,j}=sts$.
  If $i=1$ then $j>2$ and we can let $s=s_{j-1,j}$. Then $s\in D(t)$ and $tst=t_{i,j-1}=sts $.
  The fact that $s^t$ is a minimal element in $AD(us,vs)$ can be proved as in the former case and is left to the reader.
  The last statement follows directly by Theorem \ref{tt*Dn}.
 \end{proof}











\section{The group $E_8$}\label{typeE}

This section is devoted to the study of the Weyl group (of type) $E_8$ and in particular we are going to obtain parallel results to those obtained in \S\ref{typeD} for Weyl groups of type $D_n$. As many results are rather technical we will only sketch some of the proofs. We collect here a description of a possible realization of the Weyl group $E_8$ as a finite reflection group that is mainly developed in \cite{eriksson}.
First we fix some notation. For any $H\subset [8]$ and any vector $x=(x_1,\ldots,x_8)\in \mathbb R^8$ we let
\[
 \Sigma_H(x):= \sum_{h\in H}x_h
\]
and $\Sigma(x):=\Sigma_{[8]}(x)$.
Next we introduce a set of reflections on $\mathbb R^8$.
\begin{itemize}
 \item For all $I\subset [8]$, $|I|=2$, $I=\{i,j\}$, we let $s_{I}\in GL(\mathbb R^8)$ be given by
 \[
  s_{I}(x)_k=\begin{cases}
                  x_k&\textrm{if $k\neq i,j$}\\ x_i&\textrm{if $k= j$}\\x_j&\textrm{if $k= i$},
                 \end{cases}
\]
for all $x=(x_1,\ldots,x_8)\in \mathbb{R}^8$.
The endomorphism $s_{I}$ is a reflection as it fixes pointwise the hyperplane of equation $x_i-x_j=0$ and sends the vector $\alpha_{I}:=e_j-e_i$ (where we assume $i<j$ and we denote by $e_1,\ldots,e_8$ the canonical basis elements of $\mathbb R^8$) to its negative;
\item for all $H\subset[8]$ such that $|H|=3,6$ we let ${s}_H \in GL(\mathbb R^8)$ be given by
$${s}_H(x)_i:=\begin{cases}
            x_i,&\textrm{if }i\notin H\\ x_i+\frac{|H|}{9}\Sigma(x)-\Sigma_H(x),&\textrm{if }i\in H.
           \end{cases}$$
The endomorphism ${s}_H$ is a reflection as it fixes pointwise the hyperplane of equation $\frac{|H|}{9}\Sigma(x)-\Sigma_H(x)=0$ and sends the vector ${\alpha}_H=\sum_{h\in H}e_h$ to its negative.
\item for all $L\subset [8]$ with $|L|=1$, $L=\{l\}$,  we let ${s}_L  \in GL(\mathbb R^8)$ be given by
$$ {s}_L(x)_i =\begin{cases}
            x_i-x_l,&\textrm{if }i\neq l,\\ -x_l,&\textrm{if }i=l.
           \end{cases}$$
           The endomorphism ${s}_L$ is also a reflection as it fixes pointwise the hyperplane of equation $x_l=0$ and sends the vector
           ${\alpha}_L=e_l+\sum_{i=1}^8 e_i$ to its negative.
\end{itemize}
If we let
$$T=\{s_H:\,   H\subset [8] \textrm{ and }|H|=1,2,3,6\}.$$
one can check that the set
\[
 \Phi:=\{\pm \alpha_{H}:\, H\subset [8] \textrm{ and }|H|=1,2,3,6\}
\]
is invariant under the action of all reflections in $T$ and therefore the group $W$ generated by $T$ is a finite reflection group. If $H=\{h_1,\ldots,h_r\}$ with $r=|H|=1,2,3,6$ we also write
$s_{h_1,\ldots,h_r}$ and $\alpha_{h_1,\ldots,h_r}$ instead of $s_{\{h_1,\ldots,h_r\}}$ and $\alpha_{\{h_1,\ldots,h_r\}}$ respectively.

If we take the following as the fundamental chamber
\[
 \mathcal C_0:=\{x\in \mathbb R^8:\, x_1<x_2<\cdots <x_8,\, \frac{1}{3}\Sigma(x)-\Sigma_{\{1,2,3\}}(x)<0\}.
\]
we have that the corresponding base of $\Phi$ is given by
\[
 \Delta:=\{\alpha_{1,2},\ldots,\alpha_{7,8},{\alpha}_{1,2,3}\}
\]
and so the corresponding set of simple reflections is
\[
 S:=\{s_{1,2},\ldots,s_{7,8},{s}_{1,2,3}\}
\]
and it is now a straightforward check that $(W,S)$ is a Coxeter system of type $E_8$ and we simply denote it by $E_8$. The set of positive roots is given by
all ${\alpha}_H$, $|H|=1,2,3,6$ and we have the following decompositions of the positive roots:
\begin{enumerate}
 \item $\alpha_{i,j}=\sum\limits_{k=i}^{j-1}\alpha_{i,i+1}$, for all $i<j$;
 \item ${\alpha}_J={\alpha}_{1,2,3}+\alpha_{1,j_1}+\alpha_{2,j_2}+\alpha_{3,j_3}$, for all $J=\{j_1,j_2,j_3\}$ such that $ 1\leqslant j_1<j_2<j_3\leqslant 8$;
 \item ${\alpha}_K=2{\alpha}_{1,2,3}+\alpha_{1,k_1}+\alpha_{2,k_2}+\alpha_{3,k_3}+\alpha_{1,k_4}+\alpha_{2,k_5}+\alpha_{3,k_6}$, for all
 $K=\{k_1,k_2,\ldots,k_6\}$ such that $1\leqslant k_1<k_2<\cdots< k_6 \leqslant 8$.
 \item ${\alpha}_l=3{\alpha}_{1,2,3}+\alpha_{2,4}+\alpha_{3,5}+\alpha_{1,6}+\alpha_{2,7}+\alpha_{3,8}+\alpha_{1,l}$, for  all $l\in[8]$,
\end{enumerate}
where we let $\alpha_{i,i}:=0$ for all $i\in[8]$.
We are therefore in a position to describe explicitly the partial order $\prec$ on $T$.
Let $i<j$, $h<k$, $J=\{j_1,j_2,j_3\}$ with $j_1<j_2<j_3$, $J'=\{j_1',j_2',j_3'\}$ with $j_1'<j_2'<j_3'$, $K=\{k_1,k_2,\ldots,k_6\}$ with $k_1<k_2<\cdots <k_6$, $K'=\{k_1',k_2',\ldots,k_6'\}$ with $k_1'<k_2'<\cdots <k_6'$, $l,l'\in [8]$. We have

\begin{itemize}
\item $s_{i,j}\prec s_{h,k}$ if $h\leq i<j\leq k$;
 \item $s_{i,j}\prec {s}_J$ if $j\leq j_3$, $(i,j_1)\neq (1,1)$, $(i,j_2)\neq (2,2)$;
 \item $s_{i,j}\prec {s}_K$ if $j\leq k_6$;
 \item $s_{i,j}\prec {s}_l$;
 \item ${s}_J\prec {s}_{J'}$ if $j_1\leq j_1'$, $j_2\leq j_2'$, $j_3\leq j_3'$;
 \item ${s}_J\prec {s}_K$ if $j_1\leq k_4$, $j_2\leq k_5$, $j_3\leq k_6$;
 \item ${s}_J\prec {s}_l$;
 \item ${s}_K\prec {s}_{K'}$ if $k_1\leq k_1',\ldots,k_6\leq k_6'$;
 \item ${s}_K\prec {s}_l$ if $k_1\leq l$;
 \item ${s}_l\prec {s}_{l'}$ if $l\leq l'$.
\end{itemize}

As a consequence of our choice of $\mathcal C_0$ we also have the following description of the halfspaces $H_t^+$, as $t\in T$.
\begin{itemize}
 \item $H_{s_{i,j}}^+:\, x_i-x_j<0$, for $i<j$;
 \item $H_{{s}_H}^+:\, \frac{|H|}{9}\Sigma(x)-\Sigma_H(x)<0$, for all $H\subset[8]$ such that $|H|=3,6$;
  \item $H_{{s}_l}^+:\, x_l>0$, for all $l\in[8]$.
\end{itemize}

Therefore we can also explicitly determine the sets $D(w)$ for all $w\in E_8$.
\begin{prop}Let $w\in E_8$ and $x\in \mathcal C_w=w^{-1}(\mathcal C_0)$. Then
\begin{align*}s_{i,j}\in D(w) & \Leftrightarrow x_i-x_j>0 \textrm{, for all $i<j$}; \\
{s}_H\in D(w) &\Leftrightarrow  \frac{|H|}{9}\Sigma(x)-\Sigma_H(x)>0 \textrm{, for all $H\subset [8],\, |H|=3,6$};\\
{s}_l\in D(w) &\Leftrightarrow  x_l<0 \textrm{, for all $l\in [8]$}.
\end{align*}
In particular we have
\[
 \ell(w)=|\{i<j: x_i>x_j\}|+|\{H:\, |H|=3,6,\, \frac{|H|}{9}\Sigma(x)-\Sigma_H(x)>0\}|+|\{l:\, x_l<0\}|.
\]
\end{prop}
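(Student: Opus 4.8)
The plan is to deduce everything directly from the geometric description of descent sets established in the preliminaries, namely that $t\in D(w)$ if and only if the reflecting hyperplane $H_t$ separates the chamber $\mathcal C_w=w^{-1}(\mathcal C_0)$ from the fundamental chamber $\mathcal C_0$. The point is that the hard work has already been done in setting up the explicit halfspaces $H_t^+$ and the associated linear forms, so the proof is essentially a translation.

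First I would observe that, by the very construction of $\Phi^+=\Phi^+(\mathcal C_0)$ and of the halfspaces listed just above, the fundamental chamber satisfies $\mathcal C_0\subset H_t^+$ for every reflection $t\in T$. Consequently $H_t$ separates $\mathcal C_w$ from $\mathcal C_0$ precisely when $\mathcal C_w\subset H_t^-$. Next, since $\mathcal C_w$ is a chamber, i.e.\ a connected component of the complement of the union of all reflecting hyperplanes, it lies entirely on one side of each $H_t$; hence the defining linear form of $H_t$ has constant nonzero sign on $\mathcal C_w$. Therefore, for any chosen point $x\in\mathcal C_w$ we have $\mathcal C_w\subset H_t^-$ if and only if $x\in H_t^-$, that is, if and only if $x$ satisfies the strict inequality opposite to the one defining $H_t^+$. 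Reading off the three families of halfspace inequalities $H_{s_{i,j}}^+:\,x_i-x_j<0$, $H_{s_H}^+:\,\tfrac{|H|}{9}\Sigma(x)-\Sigma_H(x)<0$, and $H_{s_l}^+:\,x_l>0$ then yields exactly the three stated equivalences for $s_{i,j}$, $s_H$ (with $|H|=3,6$), and $s_l$, respectively, exhausting all four types of reflections in $T$.

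Finally, the length formula follows at once: by \eqref{lunghezzaw} we have $\ell(w)=|D(w)|$, and partitioning $D(w)$ according to the type of reflection it contains ($|H|=2$, $|H|=3,6$, and $|H|=1$) and applying the three equivalences gives the three summands in the displayed identity. I do not expect any genuine obstacle here, as the statement is a direct consequence of the preceding setup; the only point requiring a little care is to confirm that the listed halfspace descriptions are consistently oriented so that $\mathcal C_0\subset H_t^+$ for each $t$ and that they cover all reflections $s_H$ with $|H|=1,2,3,6$, but both facts are guaranteed by the explicit choice of $\mathcal C_0$ made above.
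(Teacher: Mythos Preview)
Your argument is correct and matches the paper's approach: the paper in fact states this proposition without proof, prefacing it only with ``Therefore we can also explicitly determine the sets $D(w)$\ldots'', making clear it is an immediate consequence of the halfspace descriptions just listed and of the general fact (recalled in \S\ref{notation}) that $t\in D(w)$ iff $H_t$ separates $\mathcal C_w$ from $\mathcal C_0$. Your write-up simply spells out this implicit reasoning, and there is nothing to add.
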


Our next target is to give,  for any reflection $t\in T$,  an explicit description of the map $r\mapsto r^t$ and of the set $D(t)$. We state without proof the following two easy lemmas for future reference.
\begin{lem}\label{forti}
 Let $l\in [8]$ and $H\subset [8]$, $|H|=3,6$. Then
 \begin{itemize}
  \item $\Sigma({s}_l(x))=\Sigma(x)-9x_l$;
  \item $\Sigma_H({s}_l(x))=\Sigma_H(x)-\big(|H|+\chi(l\in H)\big)x_l,$
 \end{itemize}
 where
 \[
 \chi(l\in H)=\begin{cases} 1& \textrm{if }l\in H\\0&\textrm{otherwise.}\end{cases}
 \]
\end{lem}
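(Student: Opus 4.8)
The plan is to prove both identities by direct computation from the coordinate formula defining $s_l$. Recall that for $L=\{l\}$ the reflection acts by ${s}_l(x)_i=x_i-x_l$ when $i\neq l$ and ${s}_l(x)_l=-x_l$. The only bookkeeping needed is to isolate the $l$-th coordinate, on which $s_l$ behaves differently, from the remaining seven coordinates, on which $s_l$ uniformly subtracts $x_l$; everything else is a finite linear summation.

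For the first identity I would sum the eight coordinates of $s_l(x)$, separating the index $l$:
\[
 \Sigma({s}_l(x))=\sum_{k\neq l}(x_k-x_l)+(-x_l)=\big(\Sigma(x)-x_l\big)-7x_l-x_l=\Sigma(x)-9x_l,
\]
where the coefficient $-9$ arises as $-1$ (from removing $x_l$ out of the full sum) $-7$ (from the seven subtracted copies of $x_l$, since there are exactly seven indices $k\in[8]$ with $k\neq l$) $-1$ (from the diagonal term $-x_l$). This gives the claimed value.

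For the second identity I would distinguish the two cases according to whether $l\in H$. If $l\notin H$, then every $h\in H$ satisfies $h\neq l$, so $\Sigma_H({s}_l(x))=\sum_{h\in H}(x_h-x_l)=\Sigma_H(x)-|H|x_l$, which is the asserted formula since $\chi(l\in H)=0$. If instead $l\in H$, the contribution of the index $l$ is $-x_l$ while each of the remaining $|H|-1$ indices of $H$ contributes $x_h-x_l$; summing gives
\[
 \Sigma_H({s}_l(x))=\sum_{h\in H,\,h\neq l}(x_h-x_l)+(-x_l)=\big(\Sigma_H(x)-x_l\big)-(|H|-1)x_l-x_l=\Sigma_H(x)-(|H|+1)x_l,
\]
which matches the claim since now $\chi(l\in H)=1$. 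These two cases exhaust the statement.

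As the author already signals by calling the lemma ``easy'', there is no genuine obstacle here: the argument is a finite linear computation, and the only places one could slip are the index counts (there are $7$ indices of $[8]$ distinct from $l$, and $|H|-1$ indices of $H$ distinct from $l$ when $l\in H$) and remembering that the diagonal coordinate contributes $-x_l$ rather than $0$, which is exactly what produces the extra $-x_l$ accounting for the $\chi(l\in H)$ term.
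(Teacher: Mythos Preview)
Your computation is correct. The paper itself states this lemma without proof, remarking that it is easy, so your direct coordinate-by-coordinate verification is exactly the intended (and essentially only) argument.
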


 \begin{lem}\label{fortH}
 Let $H,H'\subset[8]$ with $|H|,|H'|=3,6$. Then
\begin{itemize}
  \item $\Sigma({s}_H(x))=(|H|-1)\Sigma(x)-|H|\Sigma_H(x)$;
  \item $\Sigma_{H'}({s}_H(x))=\Sigma_{H'}(x)+|H\cap {H'}|\Big(\frac{|H|}{9}\Sigma(x)-\Sigma_H(x)\Big)$
 \end{itemize}
\end{lem}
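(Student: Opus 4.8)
The plan is to treat both identities as direct computations from the defining formula for $s_H$, in each case splitting the relevant sum according to whether an index belongs to $H$. Recall that for $|H|=3,6$ the reflection $s_H$ fixes every coordinate $x_i$ with $i\notin H$ and replaces each coordinate $x_i$ with $i\in H$ by $x_i+\frac{|H|}{9}\Sigma(x)-\Sigma_H(x)$; the whole argument is just careful bookkeeping of this common correction term.

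For the first identity I would write $\Sigma(s_H(x))=\sum_{i=1}^8 s_H(x)_i$ and separate the indices $i\notin H$, whose coordinates are unchanged, from the $|H|$ indices $i\in H$, each of which acquires the correction $\frac{|H|}{9}\Sigma(x)-\Sigma_H(x)$. This gives
\[
\Sigma(s_H(x))=\Sigma(x)+|H|\Big(\tfrac{|H|}{9}\Sigma(x)-\Sigma_H(x)\Big)=\Big(1+\tfrac{|H|^2}{9}\Big)\Sigma(x)-|H|\,\Sigma_H(x).
\]
The only point requiring attention is then the algebraic identity $1+\frac{|H|^2}{9}=|H|-1$, equivalently $|H|^2-9|H|+18=(|H|-3)(|H|-6)=0$, which holds precisely when $|H|\in\{3,6\}$. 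This is exactly where the hypothesis on the cardinality enters, and it is a reflection of the specific arithmetic of the $E_8$ root system encoded in the coefficient $\tfrac19$; the clean form of the identity would fail for other sizes.

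For the second identity I would proceed analogously, writing $\Sigma_{H'}(s_H(x))=\sum_{i\in H'} s_H(x)_i$ and observing that a coordinate indexed by $i\in H'$ is altered if and only if $i\in H$ as well. Since every altered coordinate receives the same correction and there are exactly $|H\cap H'|$ of them, one obtains
\[
\Sigma_{H'}(s_H(x))=\Sigma_{H'}(x)+|H\cap H'|\Big(\tfrac{|H|}{9}\Sigma(x)-\Sigma_H(x)\Big),
\]
as claimed; note that here the computation is uniform in $|H|$ and $|H'|$ and uses no special cancellation. In summary, both statements reduce to elementary counting of correction terms, so there is no genuine obstacle; the one subtlety worth flagging is the cancellation $1+|H|^2/9=|H|-1$ in the first part, which is what singles out the admissible cardinalities $3$ and $6$.
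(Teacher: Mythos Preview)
Your argument is correct: both identities follow immediately from the definition of $s_H$ by splitting the sum over indices in $H$ and not in $H$, and the factorisation $(|H|-3)(|H|-6)=0$ is exactly what makes the first formula take its clean form. The paper states this lemma without proof, so your direct computation is precisely the routine verification the authors leave to the reader.
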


For $H,H'\subset [8]$ we denote by $H\triangle H':= (H\cup H')\setminus (H\cap H')$ the symmetric difference of $H$ and $H'$, and by $\bar H=[8]\setminus H$ the complement set of $H$.
\begin{prop}\label{DtL}
Let $l \in [8]$, $L=\{l\}$ and $H\subset [8]$, $|H|=1,2,3,6$. Then
\[
s_H^{{s}_L}=\begin{cases}
       s_{H}  & \textrm{if either $|H\cap L|=0$ and $|H|\neq 1$ or $H=L$},\\
       s_{H\triangle L}  & \textrm{if $|H\cap L|=0$ and $|H|=1$ or $|H\cap L|=1$ and $|H|=2$},\\
       s_{\bar H\triangle  L}  & \textrm{if $|H\cap L|=1$ and $|H|=3,6$},
        \end{cases}
\]
and
\[
 D({s}_l)=\{s_{h,l},s_h,\,h <  l \}\cup \{s_H:\, |H|=3,6 \textrm{ and }l\in H\}\cup \{s_l\}.
\]

\end{prop}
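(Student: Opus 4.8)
The statement splits into two essentially independent computations, and in both the guiding principle is that everything can be read off from the roots. For the conjugation formula the only general fact I need is that, since $\Phi$ is $W$-invariant and $s_H^{s_L}=s_Ls_Hs_L$ satisfies $(s_Ls_Hs_L)\big(s_L(\alpha_H)\big)=s_Ls_H(\alpha_H)=-s_L(\alpha_H)$, the vector $s_L(\alpha_H)$ is a $(-1)$-eigenvector of $s_H^{s_L}$; hence the root of $s_H^{s_L}$ is $\pm s_L(\alpha_H)$ and it suffices to compute $s_L(\alpha_H)$ and recognise the result among the $\alpha_{H'}$. So the first step is to record the action of $s_L$ on the canonical basis, which is immediate from the definition of $s_L$: one gets $s_L(e_k)=e_k$ for $k\neq l$ and $s_L(e_l)=-\sum_{k=1}^8 e_k$.

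With these two identities the formula for $s_H^{s_L}$ becomes a short case check on $(|H|,|H\cap L|)$, using the explicit roots $\alpha_{i,j}=e_j-e_i$, $\alpha_H=\sum_{h\in H}e_h$ for $|H|=3,6$, and $\alpha_l=e_l+\sum_k e_k$. When $l\notin H$ and $|H|\neq 1$ the root $\alpha_H$ involves no $e_l$, hence is fixed and $s_H^{s_L}=s_H$; the case $H=L$ gives $s_L(\alpha_L)=-\alpha_L$, again $s_H^{s_L}=s_H$. In the cases $|H|=1$ with $l\notin H$, and $|H|=2$ with $l\in H$, substituting $s_L(e_l)$ turns $\alpha_H$ into $\pm\alpha_{H\triangle L}$ (namely into $e_h-e_l$ and into $\pm\alpha_{\{j\}}$, $H\setminus L=\{j\}$, respectively), so $s_H^{s_L}=s_{H\triangle L}$. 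Finally, for $|H|=3,6$ with $l\in H$, replacing $e_l$ by $-\sum_k e_k$ yields $s_L(\alpha_H)=\sum_{h\in H\setminus L}e_h-\sum_k e_k=-\sum_{k\in\overline{H\setminus L}}e_k=-\alpha_{\bar H\triangle L}$, whence $s_H^{s_L}=s_{\bar H\triangle L}$, with $|\bar H\triangle L|=9-|H|\in\{3,6\}$; these are exactly the three cases of the statement.

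For $D(s_l)$ I would apply the explicit description of the sets $D(w)$ for $w\in E_8$ given above to $w=s_l$, evaluated at a point $x=s_l(x_0)$ of the open chamber $\mathcal C_{s_l}=s_l(\mathcal C_0)$ with $x_0\in\mathcal C_0$. Writing $x_i=(x_0)_i-(x_0)_l$ for $i\neq l$ and $x_l=-(x_0)_l$, the transposition- and coordinate-type conditions are immediate: one finds $x_i-x_j>0$ (for $i<j$) exactly when $j=l$ and $i<l$, giving the reflections $s_{h,l}$ with $h<l$, and $x_m<0$ exactly when $m\leq l$, giving the $s_h$ with $h<l$ together with $s_l$ itself. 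For the reflections $s_H$ with $|H|=3,6$ I would use Lemma \ref{forti} to evaluate the defining form of $H_{s_H}$ at $x=s_l(x_0)$, obtaining $\frac{|H|}{9}\Sigma(x)-\Sigma_H(x)=g(x_0)+\chi(l\in H)(x_0)_l$, where $g(x_0):=\frac{|H|}{9}\Sigma(x_0)-\Sigma_H(x_0)<0$ since $x_0\in\mathcal C_0\subseteq H_{s_H}^+$. If $l\notin H$ this equals $g(x_0)<0$, so $s_H\notin D(s_l)$.

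The one point requiring care, and the main obstacle, is the remaining sign: one must show $g(x_0)+(x_0)_l>0$ when $l\in H$. Since $\frac{|H|}{9}\Sigma-\Sigma_H$ is one of the reflecting hyperplane forms, its sign is constant on the chamber $\mathcal C_{s_l}$, so it suffices to evaluate it at a single convenient interior point, e.g. $x_0=(20,21,\ldots,27)\in\mathcal C_0$; there $g(x_0)+(x_0)_l=\frac{|H|}{9}\Sigma(x_0)-\Sigma_{H\setminus L}(x_0)$, which a one-line estimate shows to be positive in both sizes (the tight case being $|H|=6$, where it equals $\frac{1128}{9}-\Sigma_{H\setminus L}(x_0)$ with $\Sigma_{H\setminus L}(x_0)\leq 125$). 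This yields $s_H\in D(s_l)\Leftrightarrow l\in H$, and assembling the three types of reflection gives precisely the asserted $D(s_l)$. I would stress that the delicate part is not the case analysis but justifying that one numerical evaluation determines the sign throughout $\mathcal C_{s_l}$; this is exactly where the interpretation of $\frac{|H|}{9}\Sigma-\Sigma_H$ as a wall of the arrangement is used.
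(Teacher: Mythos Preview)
Your proof is correct. The conjugation formula you obtain by computing $s_L(\alpha_H)$ and recognising the resulting root is dual to the paper's approach, which shows instead that $s_L$ maps the fixed hyperplane $H_{s_H}$ into $H_{s_{H'}}$; the two verifications are equivalent and equally short.

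For $D(s_l)$ your method differs more noticeably from the paper's. You proceed by a uniform direct computation via Lemma~\ref{forti} and then settle the one undetermined sign (the case $|H|\in\{3,6\}$, $l\in H$) by evaluating at a single test point of $\mathcal C_0$, invoking connectedness of the chamber $\mathcal C_{s_l}$ to propagate the sign. This is valid and self-contained. The paper instead uses the structural lemmas already available: for $l\notin H$ it appeals to Corollary~\ref{commrefl} (commuting reflections lie in $A$, not $D$); for $|H|=6$ with $l\in H$ it uses the conjugation formula together with Lemma~\ref{lemmainvarianza} to reduce to the $|H|=3$ case; and in the remaining case $|J|=3$, $l\in J$, it rewrites the quantity algebraically as
\[
\tfrac{1}{3}\Sigma(s_l x)-\Sigma_J(s_l x)=-\bigl(\tfrac{2}{3}\Sigma(x)-\Sigma_{\bar J\triangle L}(x)\bigr),
\]
which is positive for every $x\in\mathcal C_0$ simply because the right-hand side is (minus) the defining form of another wall $H_{s_{\bar J\triangle L}}$. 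This identity avoids your test-point argument entirely and explains why the sign is forced, rather than merely checking it; conversely, your approach is more uniform across the cases and does not require spotting that rewriting.
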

\begin{proof}
 The description of the map $r\mapsto r^{s_{l}}$ is a straightforward verification. We prove it in one significant case only, leaving the details of all the other cases.
 We consider the case $s_J$ with $|J|=3$ and $L\subset J$. We have to prove that $s_J^{s_L}=s_{\bar J\triangle L}$ and for this it is enough to show that $s_L(H_{s_J})\subset H_{s_{\bar H\triangle L}}$, where we recall that we denote by $H_r$ the reflecting hyperplane associated to a reflection $r$. So let $x\in H_{s_J}$, i.e. such that $\frac{1}{3}\Sigma(x)-\Sigma_J(x)=0$. We have, by Lemma \ref{forti}, $\Sigma(s_L(x))=\Sigma(x)-9x_l$ and $\Sigma_{\bar J \triangle L}(s_L(x))=\Sigma_{\bar J\triangle L}(x)-7x_l=(\Sigma(x)-\Sigma_J(x)+x_l)-7x_l$. Therefore
 \[
  \frac{2}{3}\Sigma(s_L(x)-\Sigma_{\bar J\triangle L}(s_L(x))=0,
 \]
 i.e. $s_L(x)\in H_{s_{\bar H\triangle L}}$.

 We therefore proceed with the study of $D(s_{l})$. We let $x\in \mathcal C_0$.
 Let $h\in[8]$, $h\neq {l}$. We have $s_h\in D(s_{l})$ if and only if $(s_{l}(x))_h=x_h-x_{l}<0$ and this is satisfied if and only if $h<{l}$. Therefore, for $h<{l}$, $s_h\in D(t_{l})$. Since $s_h^{s_{l}}=s_{h,{l}}$ we also have $s_{h,{l}}\in D(s_{l})$. Similarly, $s_h$ and $s_{{l},h}$ are not elements in $D(s_{l})$ if $h>{l}$.

 If $|{H}|=2,3,6$ and ${l}\notin {H}$ we have that $s_{H}$ and $s_{l}$ commute and so $s_{H} \notin D(s_{l})$ by Corollary \ref{commrefl}.

 If ${l}\in J\subset [8]$, with $|J|=3$ we have ${s}_J\in D({s}_{l})$ if and only if $\frac{1}{3}\Sigma({s}_{l} x)-\Sigma_J({s}_{l}x)>0$. By Lemma \ref{forti} we have
   \begin{align*}
    \frac{1}{3}\Sigma({s}_{l} x)-\Sigma_J({s}_{l}x)&=\frac{1}{3}\Sigma(x)-3x_{l}-\Sigma_J(x)+4x_{l}\\
    &=\frac{1}{3}\Sigma(x)-\Sigma_J(x)+x_{l}\\
    &= -\frac{2}{3}\Sigma(x)+\Sigma_{\bar J \triangle L}(x)
   \end{align*}
   where we have used the simple observation $\Sigma_J(x)+\Sigma_{\bar J \triangle L}(x)=\Sigma(x)+x_{l}$. The result follows since $x\in \mathcal C_0$.

   If ${l}\in J\subset [8]$, with $|J|=6$ the result follows since $s_J^{t_{l}}=s_{\bar J \triangle L}$ and $|\bar J \triangle L|=3$.

\end{proof}

The proofs of the following three results, Propositions \ref{DtK}, \ref {DtJ} and \ref{DsI} are very similar to the proof of Proposition \ref{DtL} and hence they are omitted.
\begin{prop}\label{DtK}
Let $H,K\subset [8]$, $|K|=6$, $|H|=1,2,3,6$. Then
\[
{s_{\hspace{-0.5mm}\scriptscriptstyle H}}^{\hspace{-1.5mm}{s}_{\hspace{-0.3mm}K}}=\begin{cases}
         s_H & \textrm{if $ |H\cap K|=0,2,4,6$};\\
         s_{H\triangle K}& \textrm{if $|H\cap K|=3,5$, or $|H\cap K|=1$ and $|H|=2$};\\
         s_{\bar H\triangle K} & \textrm{if $|H\cap K|=1$ and $|H|=1,3$};
        \end{cases}
\]
and
\[
 D({s}_K)=\{ s_I,s_{K\triangle I}:\, I=\{i,j\},\, i<j,\, i\notin K,\, j\in K \}\cup \{ s_J,\, |J|=3,\, J\subset K\}\cup\{s_K\}.
\]
\end{prop}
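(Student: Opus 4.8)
The plan is to mirror the proof of Proposition~\ref{DtL}, establishing the conjugation formula and the description of $D(s_K)$ separately.

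For the conjugation formula I would use, as in the general theory, that $s_K(H_{s_H})=H_{s_H^{s_K}}$; it therefore suffices to produce in each case a set $H'$ with $s_K(H_{s_H})\subseteq H_{s_{H'}}$, since equality of hyperplanes then forces $s_H^{s_K}=s_{H'}$. Lemmas~\ref{forti} and~\ref{fortH} turn each such containment into a linear identity. For instance, for $|H|=2$, $H=I=\{i,j\}$ with $|I\cap K|=1$, say $i\in K$, $j\notin K$, one has $|K\cap(I\triangle K)|=5$ and $\Sigma_{I\triangle K}(x)=\Sigma_K(x)-x_i+x_j$, so Lemma~\ref{fortH} gives
\[
\tfrac23\Sigma(s_K(x))-\Sigma_{I\triangle K}(s_K(x))=\tfrac23\big(5\Sigma(x)-6\Sigma_K(x)\big)-\big(\Sigma_K(x)-x_i+x_j\big)-5\big(\tfrac23\Sigma(x)-\Sigma_K(x)\big)=x_i-x_j,
\]
which vanishes on $H_{s_I}=\{x_i=x_j\}$; hence $s_I^{s_K}=s_{I\triangle K}$. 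Every other case is analogous: the parity and the value of $|H\cap K|$ dictate whether one applies a symmetric difference, a complementation, or neither, the only delicate point being that, since $|\bar K|=2$, these operations may change the cardinality of $H$ (a singleton going to a $3$-set and conversely).

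For $D(s_K)$ I would use the chamber criterion: fixing $x\in\mathcal C_0$, a reflection $r$ lies in $D(s_K)$ if and only if $s_K(x)\in H_r^-$, i.e. the defining inequality of $H_r^+$ is reversed at $s_K(x)$. Whenever the conjugation formula yields $s_H^{s_K}=s_H$ --- equivalently, $|H\cap K|$ is even --- the reflection $s_H\neq s_K$ commutes with $s_K$ and hence lies in $A(s_K)$ by Corollary~\ref{commrefl}, so it is excluded. For the remaining reflections I would compute the relevant functional at $s_K(x)$ via Lemmas~\ref{forti} and~\ref{fortH} and determine its sign from the inequalities defining $\mathcal C_0$, namely $x_1<\cdots<x_8$, $x_l>0$ for all $l$, $\tfrac23\Sigma(x)<\Sigma_K(x)$, and $\tfrac13\Sigma(x)<\Sigma_{\{1,2,3\}}(x)=\min_{|M|=3}\Sigma_M(x)$. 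For example, for $|J|=3$ with $J\subseteq K$ one finds
\[
\tfrac13\Sigma(s_K(x))-\Sigma_J(s_K(x))=\Sigma_{K\setminus J}(x)-\tfrac13\Sigma(x)\geq\Sigma_{\{1,2,3\}}(x)-\tfrac13\Sigma(x)>0,
\]
so $s_J\in D(s_K)$; whereas for $|J\cap K|=1$ (so $J=\bar K\cup\{a\}$ with $a\in K$) the same computation returns the functional $-x_a<0$ and $s_J\notin D(s_K)$, the case $|J\cap K|=2$ being a commuting one. The reflections of cardinality $1,2,6$ are treated identically, and the invariance of $D(s_K)$ under conjugation by $s_K$ (Lemma~\ref{lemmainvarianza}), combined with the conjugation formula just proved, organizes the surviving reflections precisely into the pairs $\{s_I,s_{K\triangle I}\}$ and $\{s_J,s_{K\setminus J}\}$ listed in the statement, together with $s_K$ itself.

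The main obstacle is organizational rather than conceptual. On one hand one must correctly identify the target $H'$ in every intersection pattern, and the symmetric-difference-and-complement bookkeeping is made fussier by the cardinality jumps forced by $|\bar K|=2$. On the other hand one must check that each functional arising in the chamber computation has constant sign on $\mathcal C_0$; this is automatic in principle, since every such functional is, up to sign, the functional of a reflecting hyperplane and hence does not vanish on the chamber $\mathcal C_0$, but confirming the correct sign still requires running through the finite list of cases, exactly as in Proposition~\ref{DtL}.
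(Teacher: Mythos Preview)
Your proposal is correct and follows exactly the approach the paper itself takes: the paper simply states that the proof of Proposition~\ref{DtK} is ``very similar to the proof of Proposition~\ref{DtL} and hence omitted'', and your sketch faithfully carries out that template, computing $s_K(H_{s_H})$ via Lemma~\ref{fortH} for the conjugation formula and evaluating the defining functionals at $s_K(x)$, $x\in\mathcal C_0$, for the description of $D(s_K)$, with Corollary~\ref{commrefl} and Lemma~\ref{lemmainvarianza} organizing the cases. The sample computations you display are accurate.
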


\begin{prop}\label{DtJ}
Let $J, H\subset [8]$, $|J|=3$, $|H|=1,2,3,6$. Then
\[
{s_{\hspace{-0.5mm}\scriptscriptstyle H}}^{\hspace{-1.5mm}{s}_{\hspace{-0.3mm}J}}=\begin{cases}
         s_H& \textrm{if $(|H\cap J|,|H|)=(0,1),(0,2),(2,6),(1,3),(3,3)$ }\\
         s_{H\triangle J}& \textrm{if $(|H\cap J|,|H|)=(2,3),(1,2),(3,6),(0,3)$}\\
        s_{\bar H\triangle J} & \textrm{if $(|H\cap J|,|H|)=(1,1),(1,6)$}\\
                \end{cases}
\]
and
\[
 D({s}_J)=\{s_I,\,s_{I\triangle J:\, I=\{h,j\}},\, j\in I,\, h\notin I,\, h<j \}\cup \{ s_J\}.
\]
\end{prop}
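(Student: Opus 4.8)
The plan is to follow closely the proof of Proposition~\ref{DtL}, splitting the argument into two independent halves: the computation of the conjugation map $r\mapsto r^{s_J}$, and the determination of the set $D(s_J)$. It is convenient to attach to each reflection $s_H$ the linear form $f_H$ defining its reflecting hyperplane, normalized so that $H_{s_H}^+=\{f_H<0\}$; thus $f_{i,j}(x)=x_i-x_j$ for $i<j$, $f_H(x)=\frac{|H|}{9}\Sigma(x)-\Sigma_H(x)$ for $|H|=3,6$, and $f_l(x)=-x_l$. By our choice of $\mathcal C_0$ one has $f_H(x)<0$ for every $x\in\mathcal C_0$ and every reflection $s_H$, a fact used repeatedly below. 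I also record the two coordinate identities $s_J(e_k)=e_k+\tfrac13\alpha_J$ for $k\notin J$ and $s_J(e_k)=e_k-\tfrac23\alpha_J$ for $k\in J$, which follow at once from the definition of $s_J$.

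For the conjugation formula I would argue exactly as in the displayed case of Proposition~\ref{DtL}: to prove $s_H^{s_J}=s_{H'}$ it suffices to check $s_J(H_{s_H})=H_{s_{H'}}$, i.e.\ that $f_H\circ s_J$ is a scalar multiple of $f_{H'}$, and this is a direct computation from the coordinate identities together with Lemma~\ref{forti} (targets $|H'|=1$) and Lemma~\ref{fortH} (targets $|H'|=3,6$), the transposition targets $|H'|=2$ being handled by the two defining coordinate formulas. For instance, for $H=\{h,j\}$ with $h\notin J$, $j\in J$ one computes $s_J(\alpha_{h,j})=\pm\alpha_{(H\triangle J)}$, giving $s_H^{s_J}=s_{H\triangle J}$; the $|H|=6$ rows are obtained by feeding $\Sigma_{H'}(s_J(x))$ through Lemma~\ref{fortH}. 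The remaining rows are entirely analogous and would be omitted, as the paper does for Propositions~\ref{DtK}--\ref{DsI}. I would also note that $s_H^{s_J}=s_H$ holds precisely when $s_H$ and $s_J$ commute, which is exactly the first branch of the table.

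To determine $D(s_J)$ I would combine three ingredients. First, since $\mathcal C_{s_J}=s_J(\mathcal C_0)$, the geometric description of $D(w)$ given in \S\ref{notation} yields $s_H\in D(s_J)$ if and only if $f_H(s_J(x))>0$ for $x\in\mathcal C_0$; writing $f_H(s_J(x))=\varepsilon_H\,f_{s_H^{s_J}}(x)$ for a nonzero real $\varepsilon_H$ coming from the previous step, and using $f_{s_H^{s_J}}<0$ on $\mathcal C_0$, this reads $s_H\in D(s_J)\Leftrightarrow\varepsilon_H<0$ (this is precisely the content of Proposition~\ref{crucialprop} for the pair $s_H,s_J$). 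Second, whenever $s_H$ commutes with $s_J$, Corollary~\ref{commrefl} gives $s_H\in A(s_J)$, so all commuting reflections are excluded. Third, by Lemma~\ref{lemmainvarianza} the set $D(s_J)\setminus\{s_J\}$ is stable under conjugation by $s_J$, so it suffices to decide membership for one representative of each non-commuting pair $\{r,r^{s_J}\}$. The decisive computation is for transpositions: with $c:=\tfrac13\Sigma(x)-\Sigma_J(x)$ one finds $f_{h,k}(s_J(x))=(x_h-x_k)+c\big(\chi(h\in J)-\chi(k\in J)\big)$, and $c<0$ on $\mathcal C_0$ because $\{1,2,3\}$ minimizes $\Sigma_J(x)$ among all $3$-subsets when $x_1<\cdots<x_8$. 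Hence $\varepsilon=-1$ exactly when $h\notin J$, $k\in J$, $h<k$, where indeed $f_{h,k}(s_J(x))=-f_{(\{h,k\}\triangle J)}(x)>0$; the $3$-set representative disjoint from $J$ and the $1$-set representative inside $J$ both give $\varepsilon=+1$. Together with the invariance this produces exactly $D(s_J)=\{s_I,s_{I\triangle J}:I=\{h,j\},\,h\notin J,\,j\in J,\,h<j\}\cup\{s_J\}$.

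The main obstacle is the case bookkeeping rather than any single difficult idea, and two points demand genuine care. First, the $|H|=6$ rows cannot be reduced to a $3$-set calculation by naive complementation, since $s_{\bar H}$ with $|\bar H|=2$ is a transposition, not a reflection of the same type; these rows must be run directly through Lemma~\ref{fortH}. Second, and more essentially, membership in $D(s_J)$ does \emph{not} factor through the coarse invariant $(|H\cap J|,|H|)$: already within the type $(1,2)$ the transposition $s_{h,k}$ (with $h<k$) lies in $D(s_J)$ or in $A(s_J)$ according to whether the index belonging to $J$ is the larger or the smaller of $h,k$. This is exactly why the answer must be phrased through the explicit transpositions $I=\{h,j\}$ with $h\notin J$, $j\in J$, $h<j$ together with their $s_J$-conjugates, and it is the step at which one must resist collapsing the analysis to the combinatorial type.
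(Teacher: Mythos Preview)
Your proposal is correct and follows essentially the same approach as the paper, which omits the proof of Proposition~\ref{DtJ} on the grounds that it parallels Proposition~\ref{DtL}: verify the conjugation table by checking $s_J(H_{s_H})=H_{s_{H'}}$ via Lemmas~\ref{forti} and~\ref{fortH}, then determine $D(s_J)$ by evaluating the defining linear forms on $s_J(\mathcal C_0)$, using Corollary~\ref{commrefl} to discard commuting reflections. Your organizational device of writing $f_H\circ s_J=\varepsilon_H\,f_{s_H^{s_J}}$ and invoking Lemma~\ref{lemmainvarianza} to reduce to one representative per conjugation pair is a mild streamlining of the same method, not a different route.
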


\begin{prop}\label{DsI}
Let $I, H\subset [8]$, $I=\{i,j\}$, with $i<j$ and $|H|=1,2,3,6$. Then
\[
{s_{\hspace{-0.5mm}\scriptscriptstyle H}}^{\hspace{-1.5mm}{s}_{\hspace{-0.3mm}I}}=\begin{cases}
         s_H& \textrm{if $|H\cap I|=0,2$}\\
         s_{H\triangle I }& \textrm{if $|H\cap I|=1 $}\\
         \end{cases}
\]
and
\[
 D({s}_I)=\{ s_{i,h},\, s_{h,j}:\, i<h<j\}\cup \{s_{i,j} \}.
\]
\end{prop}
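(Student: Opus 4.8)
The plan is to mirror the proof of Proposition \ref{DtL}, exploiting throughout that $s_I=s_{i,j}$ is simply the transposition exchanging the $i$-th and $j$-th coordinates.

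For the twist formula I would argue as follows. Since conjugation by $s_I$ sends the reflection $s_H$ to the reflection whose reflecting hyperplane is $s_I(H_{s_H})$, hence whose root is $\pm s_I(\alpha_H)$, it suffices to compute the effect of $s_I$ on the roots $\alpha_H$. As $s_I$ merely swaps the $i$-th and $j$-th coordinates, it acts on each root $\alpha_H$ by relabelling its index set $H$ through the transposition $i\leftrightarrow j$, giving $\alpha_H\mapsto \pm\alpha_{s_I(H)}$, where $s_I(H)=H$ when $|H\cap I|\in\{0,2\}$ and $s_I(H)=H\triangle I$ when $|H\cap I|=1$ (this last identity is a one-line set-theoretic check; for $|H|=1$ one uses that the all-ones vector is permutation-invariant so that $\alpha_{\{l\}}=e_l+\sum_k e_k$ transforms correctly). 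Reading off the reflection attached to $\pm\alpha_{s_I(H)}$ yields precisely the three cases of the statement, noting that $|H\triangle I|=|H|$ whenever $|H\cap I|=1$, so cardinalities are preserved.

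For the set $D(s_I)$ I would first dispose of the reflections with $|H\cap I|\in\{0,2\}$: by the twist formula just proved, these $s_H$ commute with $s_I$, so by Corollary \ref{commrefl} they lie in $A(s_I)$ and hence outside $D(s_I)$ (while $s_I$ itself is trivially in $D(s_I)$). It then remains to decide, among the reflections with $|H\cap I|=1$, which belong to $D(s_I)$. Here Lemma \ref{lemmainvarianza} is the decisive simplification: since $D(s_I)\setminus\{s_I\}$ is stable under conjugation by $s_I$, and the twist pairs each $s_H$ having $i\in H,\ j\notin H$ with $s_{H\triangle I}$ having $j\in H\triangle I,\ i\notin H\triangle I$, it is enough to test one representative of each pair, say the one with $i\in H$ and $j\notin H$.

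For these representatives I would use the geometric criterion that $s_H\in D(s_I)$ iff $s_I(x)\in H_{s_H}^-$ for $x\in\mathcal C_0$, together with two features of the fundamental chamber: on $\mathcal C_0$ one has $x_1<\cdots<x_8$, and every coordinate is positive, since each hyperplane $x_l=0$ bounds $\mathcal C_0$ on the side $H_{s_l}^+\colon x_l>0$. Interchanging coordinates $i$ and $j$ in $s_I(x)$, the check splits by $|H|$: for $|H|=1$ the test $(s_I x)_l<0$ fails as all coordinates are positive; for $|H|=3,6$ with $i\in H,\ j\notin H$ the relevant functional becomes $\frac{|H|}{9}\Sigma(x)-\Sigma_H(x)+x_i-x_j$, which is negative on $\mathcal C_0$ because the first summand is (as $\mathcal C_0\subset H_{s_H}^+$) and $x_i-x_j<0$ (as $i<j$), so $s_H\notin D(s_I)$; and for $|H|=2$, writing $H=\{i,c\}$, the test reduces to comparing $x_j$ with $x_c$, which holds exactly when $i<c<j$. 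Thus among the $i$-representatives only the $s_{i,c}$ with $i<c<j$ survive, and the twist recovers their partners $s_{c,j}$ with $i<c<j$; together with $s_{i,j}$ this gives exactly $\{s_{i,h},s_{h,j}:i<h<j\}\cup\{s_{i,j}\}$. The only genuinely subtle point is the case $|H|=3,6$ with $j\in H,\ i\notin H$, where the direct inequality is inconclusive; this is precisely why I route that case through Lemma \ref{lemmainvarianza} rather than attempting a head-on estimate.
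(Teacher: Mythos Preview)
Your proposal is correct and follows exactly the approach the paper intends: the paper omits the proof of this proposition, stating it is ``very similar to the proof of Proposition~\ref{DtL},'' and your argument mirrors that proof faithfully---computing the twist via the action of $s_I$ on roots, disposing of the commuting cases via Corollary~\ref{commrefl}, and using the conjugation pairing from Lemma~\ref{lemmainvarianza} to halve the remaining checks. The one point worth tightening is the $|H|=2$ case, where you should make explicit the two subcases $c<i$ and $i<c$ (the inequality to be tested flips direction), but the conclusion $i<c<j$ is correct either way.
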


An immediate consequence of Propositions \ref{DtL}, \ref{DtK}, \ref{DtJ} and \ref{DsI} is the following.
\begin{cor}\label{rprect}
Let $r,t$ be reflections such that $r\in D(t)$. Then $r\preceq t$.
\end{cor}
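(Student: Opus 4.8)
The plan is to deduce the inequality directly, by matching the explicit description of $D(t)$ supplied by Propositions \ref{DtL}, \ref{DtK}, \ref{DtJ} and \ref{DsI} against the explicit list of relations of $(T,\prec)$ recorded earlier in this section. Writing $t=s_H$ with $|H|\in\{1,2,3,6\}$, the case $r=t$ is immediate since $\preceq$ is reflexive; thus I may assume $r\in D(t)\setminus\{t\}$ and prove the strict relation $r\prec t$. There are four cases according to $|H|$, and in each of them the relevant proposition exhibits $D(t)$ as a union of a few explicit families of reflections. For every member of every family one then simply reads off from the list defining $\prec$ that it lies below $t$. This is exactly the argument used for the type $D$ analogue, Corollary \ref{twistDcor}.

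To illustrate, take $t=s_I$ with $I=\{i,j\}$, $i<j$. By Proposition \ref{DsI}, each $r\in D(t)\setminus\{t\}$ equals $s_{i,h}$ or $s_{h,j}$ for some $i<h<j$. Using the rule $s_{a,b}\prec s_{c,d}$ whenever $c\leqslant a<b\leqslant d$, the chains $i\leqslant i<h\leqslant j$ and $i\leqslant h<j\leqslant j$ yield $s_{i,h}\prec s_{i,j}$ and $s_{h,j}\prec s_{i,j}$, as wanted. The case $|H|=3$ runs the same way: by Proposition \ref{DtJ} the set $D(s_J)\setminus\{s_J\}$ consists of the reflections $s_{h,j}$ with $j\in J$, $h\notin J$, $h<j$, together with the associated $s_{I\triangle J}$ where $I=\{h,j\}$. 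For the former one invokes the rule for $s_{i,j}\prec s_J$ (the side conditions $(h,j_1)\neq(1,1)$ and $(h,j_2)\neq(2,2)$ hold automatically because $h\notin J$), and for the latter one uses that passing from $J$ to $I\triangle J$ replaces the entry $j$ by the strictly smaller entry $h$, so that the sorted triple drops componentwise and the rule $s_{J'}\prec s_J$ applies.

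The two remaining cases, $|H|=6$ and $|H|=1$, involve more families and therefore more subcases — for instance, by Proposition \ref{DtK} the set $D(s_K)$ decomposes into the reflections $s_I$ and $s_{K\triangle I}$ with $|I|=2$ (one index of $I$ inside $K$, one outside), the reflections $s_J$ with $|J|=3$ and $J\subset K$, and $s_K$ itself — but each subcase again collapses to one of the inequalities in the list defining $\prec$, verified by the same elementary comparison of indices. I do not expect any genuine difficulty here: the only obstacle is the sheer bookkeeping of checking every family against the appropriate relation, which is precisely why the statement is recorded as an immediate consequence of the four preceding propositions.
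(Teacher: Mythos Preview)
Your proposal is correct and is precisely the intended argument: the paper's own proof reads ``Simple verification,'' which amounts to exactly the case-by-case matching of Propositions \ref{DtL}--\ref{DsI} against the explicit list of relations for $\prec$ that you outline. The illustrative cases you work out are accurate, and the remaining families for $|H|=6$ and $|H|=1$ do indeed reduce to the same kind of elementary index comparison.
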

\begin{proof}
Simple verification.
\end{proof}

\begin{thm}\label{mainlemm}Let $u,v\in E_8$ be such that $AD(u,v)\neq \emptyset$ and let $t$ be a minimal element in $AD(u,v)$. Then for all $r\in D(t)\setminus \{t\}$ we have
\[
 r\in A(u)\Leftrightarrow r \in A(v)\Leftrightarrow r^t\in D(v) \Leftrightarrow r^t\in D(u).
 \]
\end{thm}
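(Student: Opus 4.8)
The plan is to establish the four conditions as equivalent by running them around the cycle
\[
 r\in A(u)\ \Longrightarrow\ r\in A(v)\ \Longrightarrow\ r^t\in D(v)\ \Longrightarrow\ r^t\in D(u)\ \Longrightarrow\ r\in A(u),
\]
exactly as in the proof of Theorem \ref{tt*Dn} for type $D$. Two preliminary observations will be used throughout. Since $r\in D(t)\setminus\{t\}$, Lemma \ref{lemmainvarianza} gives $r^t\in D(t)\setminus\{t\}$ as well, and Corollary \ref{rprect} then yields $r\preceq t$ and $r^t\preceq t$, hence $r\prec t$ and $r^t\prec t$ because $r,r^t\neq t$. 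Moreover the minimality of $t$ in $AD(u,v)$ records both that $t\in A(u)$ and $t\in D(v)$, and that no reflection strictly below $t$ in $\prec$ can belong to $AD(u,v)$.

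The first and third arrows of the cycle are immediate from minimality. If $r\in A(u)$ and $r\in D(v)$ then $r\in AD(u,v)$ with $r\prec t$, impossible; so $r\in A(v)$. If $r^t\in D(v)$ and $r^t\in A(u)$ then $r^t\in AD(u,v)$ with $r^t\prec t$, again impossible; so $r^t\in D(u)$. For the second and fourth arrows the only extra input I will need is the root identity $\alpha_t=\alpha_r+\alpha_{r^t}$. To exploit it, write $f_\rho$ for the functional cutting out $H_\rho$, normalized to be positive on $\mathcal C_0$, so that $f_{s_{i,j}}=x_j-x_i$, $f_{s_H}=\Sigma_H-\frac{|H|}{9}\Sigma$ and $f_{s_l}=x_l$, and $\rho\in D(w)$ exactly when $f_\rho<0$ on $\mathcal C_w$; the identity then reads $f_t=f_r+f_{r^t}$. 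Taking $x\in\mathcal C_v$, where $f_t(x)<0$ since $t\in D(v)$, forces $f_r(x)<0$ or $f_{r^t}(x)<0$, and $r\in A(v)$ rules out the first option, giving $r^t\in D(v)$. Taking $x\in\mathcal C_u$, where $f_t(x)>0$ since $t\in A(u)$, forces $f_r(x)>0$ or $f_{r^t}(x)>0$, and $r^t\in D(u)$ rules out the second, giving $r\in A(u)$. This closes the cycle and settles the theorem modulo the root identity.

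Everything therefore reduces to proving $\alpha_t=\alpha_r+\alpha_{r^t}$ for $r\in D(t)\setminus\{t\}$, and this is the one place where the explicit model of $E_8$ enters. Conceptually the identity is forced: by Corollary \ref{commrefl} the reflections $r$ and $t$ do not commute, so the roots lying in $\spn(\alpha_r,\alpha_t)$ form a rank-two subsystem which, $E_8$ being simply-laced, must be of type $A_2$; in an $A_2$ system the unique reflection whose descent set contains the other two positive reflections is the one attached to the highest root, i.e.\ the sum of the two simple roots, which forces $\alpha_t=\alpha_r+\alpha_{r^t}$. Explicitly, and in the spirit of this section, Propositions \ref{DsI}, \ref{DtJ}, \ref{DtK} and \ref{DtL} describe $D(t)\setminus\{t\}$ and the conjugation $r\mapsto r^t$ for $t$ of each of the types $s_I$, $s_J$ (with $|J|=3$), $s_K$ (with $|K|=6$) and $s_l$, and a short computation then verifies $f_r+f_{r^t}=f_t$ directly. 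I expect the main obstacle to be precisely this case analysis: conjugation by $t$ can change the combinatorial type of a reflection (for instance sending an $s_I$ to an $s_K$, cf.\ Proposition \ref{DtJ}), so the identity must be checked over all the resulting combinations of types, keeping the sign normalizations of the three families of functionals consistent. The conceptual $A_2$ argument avoids this bookkeeping altogether and, being indifferent to the ambient group, is the germ of the classification-free proof hoped for in the introduction.
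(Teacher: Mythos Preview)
Your argument is correct and genuinely more economical than the paper's. The paper establishes the two ``easy'' arrows exactly as you do (via $r,r^t\prec t$ and minimality), but for the remaining two implications it runs a case-by-case verification: for each of the five possible shapes of the pair $(t,r)$ coming out of Propositions~\ref{DsI}--\ref{DtL} it manipulates the defining inequalities for $D(v)$ and $A(u)$ directly, without ever naming the identity $f_t=f_r+f_{r^t}$ that is implicitly driving every one of those computations. Your observation that $r\in D(t)\setminus\{t\}$ in a simply-laced group forces $\alpha_t=\alpha_r+\alpha_{r^t}$ (equivalently $f_t=f_r+f_{r^t}$, since in this realisation $f_\rho=B(\alpha_\rho,\cdot)$ for the $W$-invariant form $B(y,x)=\sum y_ix_i-\tfrac{1}{9}\Sigma(y)\Sigma(x)$) collapses all five cases into a single two-line inequality argument. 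The conceptual justification you give---$r$ and $t$ do not commute by Corollary~\ref{commrefl}, so they span an $A_2$, and Proposition~\ref{crucialprop} forces $t(\alpha_r)=-\alpha_{r^t}$, whence the reflection formula with Cartan integer $+1$ gives $\alpha_t=\alpha_r+\alpha_{r^t}$---is type-independent and, as you note, handles the $D_n$ theorem (Theorem~\ref{tt*Dn}) simultaneously. What the paper's approach buys is explicitness: each case is checkable by hand against the concrete model, which matches the expository style of \S\ref{typeE}; what your approach buys is a uniform proof that already works for any finite simply-laced $W$ and points toward the classification-free argument the introduction asks for.
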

\begin{proof}By Lemma \ref{rprect} we have $r, r^t\prec t$ and so by the minimality of $t$ we have that $r\in A(u)$ implies $r\in A(v)$ and $r^t\in D(v)$ implies $r^t\in D(u)$. Therefore we only have to prove that $r \in A(v)$ implies $r^t\in D(v)$ and that $r^t\in D(u)$ implies $r\in A(u)$. Let $x$ be any point in $\mathcal C_u$ and $y$ be any point in $\mathcal C_v$.
 We proceed case by case.

 (1) $t={s}_l$, $r=s_{h,l}$, with $h<l$ and so $r^t={s}_h$.\\
  If $s_{h,l}\in A(v)$ we have $y_h<y_l$. Since ${s}_l\in D(v)$ we have $y_l<0$ and therefore $y_h<0$, i.e. ${s}_h\in D(v)$.  Finally, if ${s}_h\in D(u)$ we have $x_h<0$ and this, together with $x_l>0$ implies $s_{h,l}\in A(u)$.

 (2) $t={s}_l$, $r={s}_J$, $|J|=3$ with $l\in J$ and so $r^t={s}_{\bar J\triangle L}$.\\
  If ${s}_J\in A(v)$, since ${s}_l\in D(v)$ we have $\frac{1}{3}\sigma(y)-\Sigma_J(y)<0$ and $y_l<0$. We have
  \begin{align*}
   \frac{2}{3}\Sigma(y)-\Sigma_{\bar J\triangle L }(y)&=\frac{2}{3}\Sigma(y)+\Sigma_J(y)-\Sigma-y_l\\
   &= -\frac{1}{3}\Sigma(y)+\Sigma_J(y)-y_l\\
   &> -\frac{1}{3}\Sigma(y)+\Sigma_J(y)\\
   &>0
  \end{align*}
  and therefore ${s}_{\bar J\triangle L}\in D(v)$. Finally, if ${s}_{\bar J\triangle L}\in D(u)$ and ${s}_l\in A(u)$ we have $\frac{2}{3} \Sigma(x)-\Sigma _{\bar J\triangle L}(x)>0$ and $x_l>0$. Therefore
  \begin{align*}
   \frac{1}{3}\Sigma(x)-\Sigma_J(x)&=\frac{1}{3}\Sigma(x)+\Sigma_{\bar J\triangle L}(x)-\Sigma(x)-x_l\\
   &= -\frac{2}{3}\Sigma(x) +\Sigma_{\bar J\triangle L}(x)-x_l\\
   &<-\frac{2}{3}\Sigma(x) +\Sigma_{\bar J\triangle L}(x)\\
   &<0.
  \end{align*}

  (3) $t={s}_K$, with $|K|=6$ and  $r=s_I$ with $I=\{i,j\},\, i<j,\, i\notin K,\, j\in K$ and so $r^t=s_{K\triangle I}$.\\
  If ${s}_{I}\in A(v)$ we have $y_i<y_j$ and since ${s}_{K}\in D(v)$ we also have $\frac{2}{3}\Sigma(y)-\Sigma_{K}(y)>0$. Since $\Sigma_{K\triangle I}(y)=\Sigma_{K}(y)-y_j+y_i<\Sigma_K(y)$ we deduce that  $\frac{2}{3}\Sigma(y)-\Sigma_{K\triangle I}(y)>\frac{2}{3}\Sigma(y)-\Sigma_{K}(y) >0$ since $s_K\in D(v)$ and so $s_{K\triangle I}\in D(v)$. Finally, if $s_{K\triangle I}\in D(u)$ we have $\frac{2}{3}\Sigma(x)-\Sigma_{K\triangle I}(x)>0$ and combining this $\frac{2}{3}\Sigma(x)-\Sigma_{K}(x)<0$ we obtain $x_j-x_i>0$, i.e. $s_{i,j\in A(u)}$.

 (4) $t={s}_K$, with $|K|=6$ and $r={s}_J$ with $|J|=3$ and $J\subset K$ and so $r^t={s}_{K\triangle J}$.\\
  If  ${s}_{J}\in A(v)$ we have $\frac{1}{3}\Sigma(y)-\Sigma_J(y)<0$ and since ${s}_K\in D(v)$ we also have $\frac{2}{3}\Sigma(y)-\Sigma_K(y)>0$. By subtracting these inequalities we obtain $\frac{1}{3}\Sigma(y)-\Sigma_K(y)+\Sigma_J(y)>0$, i.e. ${s}_{K\triangle J} \in D(v)$. Finally, if ${s}_{K\triangle J} \in D(u)$ we have $\frac{1}{3}\Sigma(x)-\Sigma_K(x)+\Sigma_J(x)>0$ and since ${s}_K\in A(u)$ we also have $\frac{2}{3}\Sigma(x)-\Sigma_K(x)<0$ and subtracting these inequalities we conclude
 \[
  \frac{1}{3}\Sigma(x)-\Sigma_J(x)<0,
 \]
 i.e. $s_J\in A(u)$.

 (5) $t=s_{i,j}$, $r=s_{i,h}$ with $i<h<j$ and $r^t=s_{h,j}$.\\
 If $s_{i,h}\in A(v)$ we have $y_{i}<y_{h}$ and since $s_{i,j}\in D(v)$ we also have $y_i>y_j$ and therefore $y_h>y_j$, i.e. $s_{h,j}\in D(v)$.

 The proof is now complete as the statement for a reflection $r$ is equivalent to the same statement for the reflection $r^t$.
\end{proof}

\begin{prop}\label{goodsE8}
 Let $u,v\in E_8$ be such that $AD(u,v)\neq \emptyset$ and let $t$ be a minimal element in $AD(u,v)$. If $t\notin S$ then there exists $s\in S$ such that
 \begin{itemize}
  \item $s\in D(t)$;
  \item $s^t$ is minimal in $AD(us,vs)$.
 \item $s^t=t^s$;
 \end{itemize}
 Moreover, $s\in D_R(u)$ if and only if $s\in D_R(v)$.
 \end{prop}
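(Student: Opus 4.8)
The plan is to derive conditions (1) and (3) and the final assertion uniformly, and to reduce condition (2) to a single minimality check carried out by cases. Since $t\neq e$ the set $D_R(t)=S\cap D(t)$ is non-empty, so I may choose a simple reflection $s\in D(t)$; as $t\notin S$ we have $s\neq t$, which gives (1). By the contrapositive of Corollary \ref{commrefl} the reflections $s$ and $t$ do not commute, and since $E_8$ is simply-laced the reflection subgroup $\langle s,t\rangle$ is a rank-two simply-laced reflection group, hence of type $A_2$; therefore $st$ has order $3$ and $s^t=tst=sts=t^s$, which is (3). Finally, applying Theorem \ref{mainlemm} to $s\in D(t)\setminus\{t\}$ gives $s\in A(u)\Leftrightarrow s\in A(v)$, and since $s$ is simple $s\in A(w)\Leftrightarrow s\notin D_R(w)$ for $w\in\{u,v\}$; thus $s\in D_R(u)$ if and only if $s\in D_R(v)$.

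To prove (2) I would first establish the membership $r:=s^t=t^s\in AD(us,vs)$ by a conjugation argument valid in any Coxeter group. If $\rho\in T$ and $\rho\neq s$ then $\rho\notin D(s)=\{s\}$, so Proposition \ref{crucialprop} forces $s(H_\rho^+)=H_{\rho^s}^+$ (and likewise for the negative halfspaces); combined with $\mathcal C_{ws}=s(\mathcal C_w)$ and the chamber description of $A$ and $D$, this yields $\rho\in A(w)\Leftrightarrow\rho^s\in A(ws)$ and $\rho\in D(w)\Leftrightarrow\rho^s\in D(ws)$ for every $w\in W$. Hence $\rho\mapsto\rho^s$ is an involutive bijection carrying $AD(u,v)$ onto $AD(us,vs)$; note that $s$ lies in neither set, since $s\in AD(us,vs)$ would force $s\in D_R(u)$ and $s\notin D_R(v)$, while $s\in AD(u,v)$ would force the opposite, both contradicting the previous paragraph. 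Taking $\rho=t$ we obtain $r=t^s\in AD(us,vs)$.

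It remains to show that $r$ is minimal in $(AD(us,vs),\prec)$, and here I would argue by cases on the type of $t$, exactly as in Proposition \ref{goodsDn}, making the choice of $s$ explicit: for $t=s_{i,j}$ take $s=s_{j-1,j}$ (so $r=s_{i,j-1}$); for $t=s_J$ with $|J|=3$ and $J\neq\{1,2,3\}$ take $s=s_{j-1,j}$ for some $j\in J$ with $j\geq 2$ and $j-1\notin J$ (such $j$ exists because $J\neq\{1,2,3\}$); for $t=s_K$ with $|K|=6$ take an adjacent transposition $s=s_{i,i+1}$ with $i\notin K$ and $i+1\in K$ whenever one exists, and $s=s_{1,2,3}$ in the single remaining case $K=\{1,\dots,6\}$; and for $t=s_l$ take $s=s_{l-1,l}$ if $l\geq 2$ and $s=s_{1,2,3}$ if $l=1$. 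In each case one computes $r=s^t$ from Propositions \ref{DtL}, \ref{DtK}, \ref{DtJ} and \ref{DsI}, lists by means of the explicit description of $\prec$ all reflections $\sigma\prec r$ that can lie in $AD(us,vs)$, and verifies that the conjugate $\sigma^s\in AD(u,v)$ satisfies $\sigma^s\prec t$ and $\sigma^s\neq t$, contradicting the minimality of $t$ in $AD(u,v)$.

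The main obstacle is precisely this minimality verification: conjugation by $s$ acts on the positive roots as the linear reflection $s$, which does not preserve the cone spanned by the simple roots, so the minimality of $t$ does not transport formally along the bijection $\rho\mapsto\rho^s$ and each relation $\sigma^s\prec t$ has to be checked directly. The transposition case $t=s_{i,j}$ reproduces the argument of Proposition \ref{goodsDn} almost verbatim, but the cases $t=s_K$ and $t=s_l$ — and especially the two sub-cases $K=\{1,\dots,6\}$ and $l=1$ in which one is forced to take $s=s_{1,2,3}$ — are the delicate ones, since there the map $\rho\mapsto\rho^s$ is governed by the symmetric-difference rules of Propositions \ref{DtK} and \ref{DtL} and must be reconciled with the matching clauses of $\prec$.
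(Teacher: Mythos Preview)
Your proposal is correct and follows the paper's approach: the paper too exhibits exactly these case-by-case choices of $s$ (with $s^t$ computed via Propositions \ref{DtL}--\ref{DsI}) and explicitly leaves all details, including the minimality of $s^t$ in $AD(us,vs)$, to the reader. Your uniform derivation of items (1), (3), and the final assertion from Corollary \ref{commrefl}, the simply-laced structure of $E_8$, and Theorem \ref{mainlemm}, together with the conjugation bijection $\rho\mapsto\rho^s$ establishing $t^s\in AD(us,vs)$, is a welcome clarification but not a departure from the paper's route.
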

\begin{proof}
We show the existing element $s$ for any reflection $t$, leaving all the details of the proof.
 If $t=s_{i,j}$ with $j>i+1$ we can choose $s=s_{j-1,j}\in D(s_{i,j})$ (see Proposition \ref{DsI}) and so $s^t=s_{i,j-1}$.

 If $t={s}_J$ with $|J|=3$ but  $J\neq \{1,2,3\}$ let $i$ be minimal such that $i\notin J$ and $i+1\in J$. Then we can choose $s=s_{i,i+1}$ (see Proposition \ref{DtJ}) and so $s^t={s}_{J\triangle \{i, i+1\}}$.

 If $t={s}_{1,2,3,4,5,6}$ we can choose $s={s}_{1,2,3}$ (see Proposition \ref{DtK}) and so $s^t={s}_{4,5,6}$.

 If $t={s}_K$ with $K\neq \{1,2,3,4,5,6\}$ let $i$ be minimal such that $i\notin K$ and $i+1\in K$. Then we can choose $s=s_{i,i+1}$ (see Proposition \ref{DtK}) and so $s^t={s}_{K\triangle \{i,i+1\}}$.

If $t={s}_1$ we can choose $s={s}_{1,2,3}$ (see Proposition \ref{DtL}) and so $s^t={s}_{1,4,5,6,7,8}$;

If $t={s}_l$ with $l>1$ we can choose $s=s_{i-1,i}$ (see Proposition \ref{DtL}) and so $s^t={s}_{i-1}$.

\end{proof}

\section{Affine Weyl groups} \label{affine}

In this section we develope some tools in the theory of affine Weyl groups that will be needed in the study of the generalized lifting property in these groups.  An affine Weyl group $\hat W$ can be realized in the following way (see \cite[Chapter 4]{humphreysCoxeter} for more details).
Let $(W,S)$ be a finite crystallographic Coxeter {system} acting as a finite reflection group on a vector space $V$
, $T$ its set of reflections and $n:=|S|$; a corresponding affine group is the group of affine transformations generated by (affine) reflections $t^{(k)}$ through all the hyperplanes of the form $H_{t^{(k)}}=\{x\in V|\, \phi_t(x)=k\}$, where $t \in T$, $k\in \mathbb{Z}$ and $\phi_t(x)=0$ is the equation of the hyperplane fixed by $t$. We assume that the linear functions $\phi_t$ are chosen in such way that the fundamental chamber $\mathcal C_0$ of $(W,S)$ sits in the halfspace $H^+_t:=\{x:\, \phi_t(x)>0\}$ for all $t\in T$. We also let $\hat T:=\{t^{(k)}:\, t\in T,\, k\in \mathbb Z\}$ be the set of (affine) reflections of $\hat W$.

Let $\mathcal{H}:=\{H_{t^{(k)}}|\,t\in T,k\in \mathbb{Z}\}$ and define $H^+_{t^{(k)}}:=\{x\in \mathbb R^n|\, \phi_t(x)>k\}$
and $H^-_{t^{(k)}}:=\{x\in \mathbb R^n|\, \phi_t(x)<k\}$. The connected components of $\mathbb{R}^n\setminus \mathcal{H}$ are called \emph{alcoves}.
The fundamental alcove $ \mathcal A_0$ is obtained by intersecting $\mathcal C_0$ with the intersection of halfspaces $\bigcap \limits_{t\in T}H^-_{t^{(1)}}$; it results that $$ \mathcal A_0=\mathcal C_0 \cap H^-_{t_0^{(1)}},$$ where $t_0$ is the maximum in $(T,\preceq)$ (see \cite[Section 4.3]{humphreysCoxeter}).

The affine Weyl group $\hat W$ acts simply transitively on the set of alcoves and therefore we identify an element $w\in \hat W$ with the corresponding alcove $\mathcal A_w:= w^{-1}( \mathcal A_0)$.

Moreover, we associate to any pair  of alcoves $\mathcal A,\mathcal B$ an integer $$d(\mathcal A,\mathcal B):= |\{H\in \mathcal{H} : \,\mbox{$H$ separates $\mathcal A$ and $\mathcal B$}\}|.$$

A sequence of alcoves $\mathcal A_0,\ldots,\mathcal A_m$ is called a gallery of length $m$ if for all $i$ we have $d(\mathcal A_i,\mathcal A_{i+1})=1$. One has (see \cite[Theorem 4.5]{humphreysCoxeter}):
\begin{prop} \label{lunghezzaffini}
 Let $w\in \hat{W}$. Then
 \begin{enumerate}
  \item $\ell(w)= d(\mathcal A_0,\mathcal A_w)$;
  \item $\ell(w)=$ the length of the shortest gallery between $\mathcal A_0$ and $\mathcal A_w$;
  \item $t^{(k)} \in D(w)$ if and only if the hyperplane $H_{t^{(k)}}$ separates $\mathcal A_0$ from $\mathcal A_w$.
 \end{enumerate}
\end{prop}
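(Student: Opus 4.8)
The plan is to deduce all three statements from a single geometric observation about the locally finite arrangement $\mathcal H$, together with the structural fact that $(\hat W,S)$ is a Coxeter system whose simple reflections are the reflections in the walls of the fundamental alcove $\mathcal A_0$. Throughout I identify a reflection $t^{(k)}\in\hat T$ with its hyperplane $H_{t^{(k)}}$ and write $\mathrm{Sep}(w)$ for the set of hyperplanes of $\mathcal H$ separating $\mathcal A_0$ from $\mathcal A_w$, so that $d(\mathcal A_0,\mathcal A_w)=|\mathrm{Sep}(w)|$ by definition. The whole proposition then amounts to the two equalities $\ell(w)=|\mathrm{Sep}(w)|$ and ``shortest gallery length'' $=|\mathrm{Sep}(w)|$, together with the identification $D(w)=\mathrm{Sep}(w)$ for part (3), which is the exact affine transcription of the chamber description of $D(w)$ recalled just before \eqref{lunghezzaw}.

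First I would establish the purely combinatorial statement that the shortest gallery from $\mathcal A_0$ to $\mathcal A_w$ has length $|\mathrm{Sep}(w)|$. For the lower bound, consecutive alcoves of a gallery share a wall lying on a unique hyperplane, so a gallery crosses, step by step, a well-defined multiset of hyperplanes; a hyperplane $H\in\mathcal H$ separates the two endpoints if and only if it is crossed an odd number of times, hence at least once. Thus every gallery crosses each $H\in\mathrm{Sep}(w)$ and has length $\geq|\mathrm{Sep}(w)|$. For the matching upper bound I would pick generic points $p\in\mathcal A_0$ and $q\in\mathcal A_w$ and follow the straight segment $[p,q]$: by local finiteness of $\mathcal H$ and genericity it meets only finitely many hyperplanes, each transversally and one at a time, and it crosses exactly those $H$ with $p,q$ on opposite sides, i.e.\ exactly $\mathrm{Sep}(w)$. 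The alcoves successively met form a gallery of length $|\mathrm{Sep}(w)|$, giving the shortest gallery length and its equality with $d(\mathcal A_0,\mathcal A_w)$.

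Next I would identify the Coxeter length with the shortest gallery length. Using $\mathcal A_w=w^{-1}(\mathcal A_0)$ one checks the adjacency rule $\mathcal A_{sw}=w^{-1}(s\mathcal A_0)$, which is adjacent to $\mathcal A_w$ across the hyperplane $w^{-1}(H_s)$ for every $s\in S$. Hence, given a word $w=s_{i_1}\cdots s_{i_m}$, the suffix products $s_{i_j}s_{i_{j+1}}\cdots s_{i_m}$ yield alcoves forming a gallery from $\mathcal A_0$ to $\mathcal A_w$ of length $m$, so the minimal gallery length is at most $\ell(w)$. Conversely, given a gallery from $\mathcal A_0$ to $\mathcal A_w$, each adjacency step crosses a hyperplane which, being $\hat W$-conjugate to a wall of $\mathcal A_0$, lets one peel off a generator in $S$ and build an expression for $w$ of length equal to that of the gallery; thus $\ell(w)$ is at most the minimal gallery length. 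Combining with the previous step gives $\ell(w)=d(\mathcal A_0,\mathcal A_w)=|\mathrm{Sep}(w)|$, which is (1), and hence also (2).

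Finally, part (3) follows by the same reasoning as in the finite case. From $\mathcal A_{wt^{(k)}}=t^{(k)}\mathcal A_w$ one sees that $t^{(k)}\in D(w)$, i.e.\ $\ell(wt^{(k)})<\ell(w)$, holds exactly when reflecting $\mathcal A_w$ across $H_{t^{(k)}}$ decreases its distance to $\mathcal A_0$, which by the separating-hyperplane count established in (1)--(2) happens precisely when $H_{t^{(k)}}$ separates $\mathcal A_0$ from $\mathcal A_w$; equivalently $D(w)=\mathrm{Sep}(w)$. I expect the main obstacle to be not the straight-segment argument, which is elementary and self-contained, but the word--gallery correspondence underlying (1): it rests on the structural theorem that $\hat W$ acts simply transitively on the alcoves and is generated as a Coxeter system by the reflections in the walls of $\mathcal A_0$, with reduced words matching minimal galleries. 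Since this is exactly the content of \cite[Chapter 4]{humphreysCoxeter}, I would either invoke it directly or reprove it by induction on $\ell(w)$ using the adjacency rule above together with the exchange condition.
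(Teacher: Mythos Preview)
Your argument is correct and is essentially the standard textbook proof of this result. Note, however, that the paper does not actually prove Proposition~\ref{lunghezzaffini}: it is stated without proof and attributed directly to \cite[Theorem~4.5]{humphreysCoxeter}. Your write-up is thus not competing with an original argument in the paper but rather reconstructing the cited reference; the gallery/separating-hyperplane count via a generic segment, the word--gallery correspondence under the simply transitive action, and the folding argument for part~(3) are exactly the ingredients of Humphreys' proof, so there is no meaningful methodological difference to report.

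One small remark on part~(3): your last paragraph asserts that reflecting $\mathcal A_w$ across $H_{t^{(k)}}$ decreases the separating count precisely when $H_{t^{(k)}}\in\mathrm{Sep}(w)$, but you do not spell out why. The cleanest way, consistent with the tools you have already set up, is the gallery-folding trick you will see used later in Proposition~\ref{disuguaglianza}: a minimal gallery from $\mathcal A_0$ to $\mathcal A_w$ crosses $H_{t^{(k)}}$ exactly once, and folding at that crossing produces a gallery of length $\ell(w)-1$ to $t^{(k)}\mathcal A_w$; the converse direction then follows by symmetry.
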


If $t\in T$ and $k\in \mathbb{Z}$, let $\mathrm{Str}(t^{(k)})$ be the hyperstripe defined by $\mathrm{Str}(t^{(k)}):=H^+_{t^{(k-1)}} \cap H^-_{t^{(k+1)}}=\{x\in \mathbb R^n:\, k-1<\phi_t(x)<k+1\}$.

\begin{prop} \label{disuguaglianza}
Let $\mathcal A,\mathcal B$ be alcoves such that $\mathcal A,\mathcal B \not \in \mathrm{Str}(t^{(k)})$. Then
$$ |d(\mathcal A,\mathcal B)-d(\mathcal A,t^{(k)}(\mathcal B))|\geq 3.$$
\end{prop}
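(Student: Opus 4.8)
The plan is to reduce the inequality to a counting statement about which hyperplanes separate the relevant alcoves. Let me write $\phi:=\phi_t$ for brevity, so the hyperstripe is $\mathrm{Str}(t^{(k)})=\{x:\,k-1<\phi(x)<k+1\}$. The hypothesis $\mathcal A,\mathcal B\notin \mathrm{Str}(t^{(k)})$ means each of $\mathcal A,\mathcal B$ lies either in $H^-_{t^{(k-1)}}=\{\phi<k-1\}$ or in $H^+_{t^{(k+1)}}=\{\phi>k+1\}$. The key observation is that the reflection $t^{(k)}$ acts on the set of hyperplanes $\mathcal H$ as a permutation, and crucially it fixes $\phi$-level structure in a controlled way: a hyperplane of the form $H_{r^{(m)}}$ is sent to some $H_{(r')^{(m')}}$, and $t^{(k)}$ maps $H_{t^{(j)}}$ to $H_{t^{(2k-j)}}$ since it is the reflection fixing $\phi=k$.

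**The main case and the counting.** First I would treat the case where $\mathcal A$ and $\mathcal B$ lie on the \emph{same} side, say both in $H^-_{t^{(k-1)}}$ (the case both in $H^+_{t^{(k+1)}}$ is symmetric). Here the idea is that $t^{(k)}(\mathcal B)$ gets pushed to the far side $\{\phi>k+1\}$, so that a whole band of $t$-hyperplanes $H_{t^{(k-1)}},H_{t^{(k)}},H_{t^{(k+1)}}$ now separates $\mathcal A$ from $t^{(k)}(\mathcal B)$ that did not separate $\mathcal A$ from $\mathcal B$, producing a surplus of at least $3$. More precisely, I would compare $d(\mathcal A,\mathcal B)$ and $d(\mathcal A,t^{(k)}(\mathcal B))$ hyperplane-by-hyperplane: for the parallel family $\{H_{t^{(m)}}\}_{m\in\mathbb Z}$ one counts exactly how many values of $m$ lie strictly between the $\phi$-coordinates, and the reflection $\phi\mapsto 2k-\phi$ applied to $\mathcal B$ changes this count by a definite amount determined by the distances of $\phi(\mathcal A),\phi(\mathcal B)$ from $k$; the constraint that neither alcove meets the open band $(k-1,k+1)$ forces this change to be at least $3$. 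For every hyperplane $H$ \emph{not} parallel to $H_t$, I would argue that $H$ separates $\mathcal A,\mathcal B$ iff $t^{(k)}(H)$ separates $\mathcal A,t^{(k)}(\mathcal B)$, so these contribute equally to both distances and cancel in the difference.

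**The opposite-sides case.** When $\mathcal A$ and $\mathcal B$ lie on opposite sides of the stripe, the reflection $t^{(k)}(\mathcal B)$ lands on the \emph{same} side as $\mathcal A$, so now the parallel family contributes \emph{fewer} separating hyperplanes after reflection, and the difference $d(\mathcal A,\mathcal B)-d(\mathcal A,t^{(k)}(\mathcal B))$ is a large positive number of at least $3$ coming from the $t$-hyperplanes that formerly separated them. The absolute value is what matters, so in both cases the bound $\geq 3$ follows from the same parallel-family count, the sign being the only difference. I would make the counting uniform by setting $a:=\phi(\mathcal A)$, $b:=\phi(\mathcal B)$ (well-defined up to the alcove being on a fixed side), noting $|a-k|>1$ and $|b-k|>1$, and computing that the number of integers strictly between $a$ and $b$ minus the number strictly between $a$ and $2k-b$ has absolute value at least $\lfloor |b-k|\rfloor + \lfloor |b-k|\rfloor$-type lower bound that is $\geq 3$ under these constraints.

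**The anticipated obstacle.** The main delicate point is the cancellation claim for non-parallel hyperplanes: I must verify that $t^{(k)}$ really sets up a bijection on $\mathcal H\setminus\{H_{t^{(m)}}\}$ under which separation of the reflected pair matches separation of the original pair. This uses that $t^{(k)}$ is an isometry of $V$ permuting $\mathcal H$ and that $t^{(k)}(\mathcal A)$ versus $\mathcal A$ relative to a reflected hyperplane $t^{(k)}(H)$ is governed by the same incidence as $\mathcal A$ relative to $H$; here one must be careful that we are reflecting only $\mathcal B$ and not $\mathcal A$, so the correct statement is that $H$ separates $\mathcal A,\mathcal B$ iff $t^{(k)}(H)$ separates $t^{(k)}(\mathcal A),t^{(k)}(\mathcal B)$, and then to re-express $d(\mathcal A,t^{(k)}(\mathcal B))$ one partitions $\mathcal H$ into the $t$-parallel family and its complement and tracks each separately. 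The honest technical content is therefore the bookkeeping of the parallel-family count together with showing the complementary families contribute the identical amount to both distances; once that is in place, the $\geq 3$ bound is the elementary integer estimate described above.
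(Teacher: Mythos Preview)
Your approach has a genuine gap at precisely the point you label ``the anticipated obstacle'': the claim that the non-$t$-parallel hyperplanes contribute equally to $d(\mathcal A,\mathcal B)$ and to $d(\mathcal A,t^{(k)}(\mathcal B))$ is \emph{false} in general. Your proposed bijection $H\mapsto t^{(k)}(H)$ would work only if $\mathcal A$ and $t^{(k)}(\mathcal A)$ lay on the same side of every non-$t$-parallel hyperplane, which they do not. More decisively, even the \emph{total} non-parallel count fails to cancel. In $\tilde A_2$ (realized on $x+y+z=0$ with $\phi=x-y$ and $k=0$), take $\mathcal A\ni(-2.1,1.8,0.3)$ and $\mathcal B\ni(-1.2,0.9,0.3)$, both in $\{x-y<-1\}$. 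For the two non-parallel families $y-z=m$ and $x-z=m$ one finds a combined contribution of $1+1=2$ to $d(\mathcal A,\mathcal B)$ but $3+3=6$ to $d(\mathcal A,t^{(0)}(\mathcal B))$. Thus the difference $d(\mathcal A,t^{(k)}(\mathcal B))-d(\mathcal A,\mathcal B)$ cannot be read off from the parallel family alone, and the integer estimate you describe at the end does not suffice.

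The paper avoids this decomposition entirely via a gallery-folding argument. After reducing (by replacing $\mathcal B$ with $t^{(k)}(\mathcal B)$ and/or swapping $\mathcal A,\mathcal B$) to the case $\mathcal A\subset H^-_{t^{(k-1)}}$ and $\mathcal B\subset H^+_{t^{(k+1)}}$, one takes a minimal gallery $\mathcal A=\mathcal A_0,\ldots,\mathcal A_d=\mathcal B$ and lets $i$ be the unique index at which it crosses $H_{t^{(k)}}$. Then $t^{(k)}(\mathcal A_i)=\mathcal A_{i+1}$, so
\[
\mathcal A_0,\ldots,\mathcal A_i,\,t^{(k)}(\mathcal A_{i+2}),\ldots,t^{(k)}(\mathcal A_d)=t^{(k)}(\mathcal B)
\]
is a gallery of length $d-1$ from $\mathcal A$ to $t^{(k)}(\mathcal B)$. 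This gallery is not minimal: both endpoints lie in $H^-_{t^{(k-1)}}$, while $\mathcal A_i$ (being adjacent to $H_{t^{(k)}}$) lies in $H^+_{t^{(k-1)}}$, so $H_{t^{(k-1)}}$ is crossed at least twice. Hence $d(\mathcal A,t^{(k)}(\mathcal B))\leq (d-1)-2=d-3$, which is the desired bound.
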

\begin{proof}Without loss of generality we can assume that $\phi_t(x)<k-1$ for all $x\in \mathcal A$ and  $\phi_t(y)>k+1$ for all $y\in \mathcal B$. Let $\mathcal A=\mathcal A_0,\mathcal A_1,\ldots,\mathcal A_d=\mathcal B$ be a minimal gallery joining $\mathcal A$ to $\mathcal B$ and let $i$ be the (unique) index in $[d]$ such that $\phi_t(x)<k$ for all $x\in \mathcal A_i$ and $\phi_t(y)>k$ for all $y\in \mathcal A_{i+1}$. Then $t^{(k)}(\mathcal A_i)=\mathcal A_{i+1}$ and therefore
\[
 \mathcal A=\mathcal A_0,\mathcal A_1,\ldots,\mathcal A_i,t^{(k)}(\mathcal A_{i+2}),\ldots,t^{(k)}(\mathcal A_{d})=t^{(k)}(\mathcal B)
\]
is a gallery joining $\mathcal A$ and $t^{(k)}(\mathcal B)$ of length $d-1$. Nevertheless this gallery is not minimal as it crosses the hyperplane $H_{t^{(k-1)}}$ twice, as both $\mathcal A$ and $t^{(k)}(\mathcal B)$ are contained in $H^-_{t^{(k-1)}}$ while $\mathcal A_i\in H^+_{t^{(k-1)}}$, and the result follows.
\end{proof}

The following result will be used to show that in any infinite Coxeter group there is a Bruhat interval for which the generalized lifting property does not hold.
\begin{prop} \label{chamberhyperstripes}
A chamber $\mathcal C$ cannot be covered by a finite number of hyperstripes.
\end{prop}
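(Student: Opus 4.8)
The plan is to exploit the fact that a chamber $\mathcal C$ is an unbounded open cone with apex at the origin, whereas each hyperstripe $\mathrm{Str}(t^{(k)})$ is a slab of bounded width in the direction of the linear functional $\phi_t$. The idea is that by travelling far enough out along a ray inside $\mathcal C$ one can make each of the finitely many functionals $\phi_{t_i}$ large in absolute value, and hence escape every one of the given hyperstripes simultaneously.

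Concretely, suppose for contradiction that $\mathcal C\subseteq \bigcup_{i=1}^m \mathrm{Str}(t_i^{(k_i)})$ for some reflections $t_1,\ldots,t_m\in T$ and integers $k_1,\ldots,k_m$. Recall that the reflecting hyperplanes of the underlying finite reflection group all pass through the origin, so $\mathcal C$ is stable under multiplication by positive scalars. First I would fix any point $p\in \mathcal C$. Since $p$ lies in the open chamber it is off each hyperplane $H_{t_i}$, so $\phi_{t_i}(p)\neq 0$ for every $i$. By linearity of $\phi_{t_i}$ we have $\phi_{t_i}(\lambda p)=\lambda\,\phi_{t_i}(p)$, and hence $|\phi_{t_i}(\lambda p)|\to\infty$ as $\lambda\to+\infty$. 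Because there are only finitely many indices, I can choose a single $\lambda>0$ with $\lambda\,|\phi_{t_i}(p)|>|k_i|+1$ for all $i=1,\ldots,m$ at once. For such $\lambda$ the reverse triangle inequality gives $|\phi_{t_i}(\lambda p)-k_i|\geq |\phi_{t_i}(\lambda p)|-|k_i|>1$, so $\phi_{t_i}(\lambda p)\notin(k_i-1,k_i+1)$ and therefore $\lambda p\notin \mathrm{Str}(t_i^{(k_i)})$ for every $i$. But $\lambda p\in \mathcal C$ because $\mathcal C$ is a cone, contradicting the assumed covering.

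The argument is essentially immediate once the two geometric facts are in place, so there is no serious obstacle. The only points requiring (routine) care are that $\mathcal C$ is genuinely a cone based at the origin, which is exactly the statement that the finite arrangement consists of linear hyperplanes, and that $p$ avoids all the $H_{t_i}$, which holds because $p$ is interior to a chamber. Finiteness of the family of hyperstripes is used in an essential way: it is what allows a single scaling factor $\lambda$ to push the chosen ray beyond all the slabs at once.
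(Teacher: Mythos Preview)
Your proof is correct and follows essentially the same approach as the paper: pick a point in the chamber, use that the chamber is a cone based at the origin to scale the point, and observe that linearity of the $\phi_t$ forces each $|\phi_{t_i}(\lambda p)|$ to become arbitrarily large, so the scaled point escapes every hyperstripe. Your version is in fact a bit more careful with signs (working with $|\phi_{t_i}(\lambda p)|$ and the reverse triangle inequality) than the paper's, but the underlying idea is identical.
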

\begin{proof}

Is is well-known that $\mathbb R^n$ is not a union of a finite number of hyperplanes and so the same hold for $\mathcal C$. So let $x\in \mathcal C$ such that $\phi_t(x)\neq 0$ for all $t\in T$.  As all $\phi_t$ are linear it is clear that for every $a\in \mathbb R$ there exists $c>0$ such that $\phi_t(cx)>a$ for all $t\in T$. As $cx\in \mathcal C$ for all $c>0$ the result follows.
\end{proof}


\section{The generalized lifting property} \label{sectionGLP}

The lifting property stated in Proposition \ref{sollevamento} does not ensure that for a Bruhat interval $[u,v]$ there exists a simple
reflection $s\in D_R(v)\setminus D_R(u)$ such that $u\vartriangleleft us \leqslant v$ and $u\leqslant vs \vartriangleleft v$. Of course,
if $D_R(v) \subseteq D_R(u)$, such an $s$ does not exist; in general one can hope that there exists a reflection $t\in AD(u,v)$ for which
$u\vartriangleleft ut \leqslant v$ and $u\leqslant vt \vartriangleleft v$. This is called the \emph{generalized lifting property} (GLP) of the interval $[u,v]$ and \cite[Theorem 3.3]{TsukermanWilliams} asserts that in the symmetric groups $A_n$ the GLP holds for every interval.
Clearly, by Proposition \ref{sollevamento}, an interval of the type $[e,u]$ has the GLP, for every $u>e$, and, in finite groups, the same happens for intervals of the type $[u,w_0]$, for every $u<w_0$, where $w_0$ denote the unique element of maximal length.

The following example shows that this is not the case in general.

\begin{ex} \label{esempioI4}
Let $(W,S)$ be a Coxeter group of rank 2 with $S=\{s,t\}$ and $m_{s,t}\geq 4$ (for example the Weyl group of type $B_2$). Consider the elements
$u=s$ and $v=sts$. Then $\ell(u,v)=2$ and $[u,v]=\{s,st,ts,sts\}$.
Therefore $AD(u,v)=\{sts, ststs\}$ but  $\ell(uststs)=4>\ell(u)+1$ and $\ell(vsts)=0<\ell(v)-1$.
\end{ex}
The example above shows that there is an interval in any Coxeter group which is not simply-laced for which the GLP does not hold.

The following theorems affirm that every interval
in a finite simply-laced Coxeter groups, i.e. a direct product of Weyl groups of type $A_n$, $D_n$, $E_6$, $E_7$ and $E_8$, has the GLP.





\begin{thm}[Covering property]\label{theoremcovering}Let $W$ be a finite simply-laced Coxeter system, $u,v\in W$ be such that $AD(u,v)\neq \emptyset$ and $t$ be a minimal element in $AD(u,v)$. Then $u\lhd ut$ and $v\rhd vt$.
\end{thm}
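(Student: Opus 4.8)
The plan is to reduce the covering assertion to a single length computation carried out in the reflection representation of $W$. Since $u<ut$ (as $t\in A(u)$) and $vt<v$ (as $t\in D(v)$), proving $u\lhd ut$ and $v\rhd vt$ amounts to showing $\ell(ut)=\ell(u)+1$ and $\ell(vt)=\ell(v)-1$. To compute $\ell(ut)=|D(ut)|$ via \eqref{lunghezzaw} I would use $\mathcal C_{ut}=t(\mathcal C_u)$ together with Proposition \ref{crucialprop}: a reflection $r$ lies in $D(ut)$ precisely when $\mathcal C_u\subset t(H_r^-)$, and Proposition \ref{crucialprop} tells us that $t(H_r^-)=H_{r^t}^+$ when $r\in D(t)$ while $t(H_r^-)=H_{r^t}^-$ when $r\in A(t)$. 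Recalling $T=\{t\}\sqcup(D(t)\setminus\{t\})\sqcup A(t)$, this yields the clean description
\[
 D(ut)=\{t\}\;\sqcup\;\{r\in D(t)\setminus\{t\}:\,r^t\in A(u)\}\;\sqcup\;\{r\in A(t):\,r^t\in D(u)\},
\]
where the term $\{t\}$ is present exactly because $t\in A(u)$.

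The second step is a cancellation. Writing $\ell(u)=|D(u)|$ and splitting it along the same partition of $T$, I would invoke Lemma \ref{lemmainvarianza} to see that $r\mapsto r^t$ is a bijection of $A(t)$ onto itself, so the two $A(t)$-contributions to $\ell(ut)$ and $\ell(u)$ are equal and cancel; the same bijection on $D(t)\setminus\{t\}$ lets me replace the condition $r^t\in A(u)$ by $r\in A(u)$. What survives is
\[
 \ell(ut)-\ell(u)=1+\big|\{r\in D(t)\setminus\{t\}:\,r\in A(u)\}\big|-\big|\{r\in D(t)\setminus\{t\}:\,r\in D(u)\}\big|.
\]
Now Corollary \ref{commrefl} guarantees that $r\mapsto r^t$ has no fixed point on $D(t)\setminus\{t\}$ (a fixed point would be a reflection commuting with $t$, hence lying in $A(t)$), so this set is partitioned into two-element orbits $\{r,r^t\}$. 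The crucial input — the biconditional $r\in A(u)\Leftrightarrow r^t\in D(u)$, valid for every $r\in D(t)\setminus\{t\}$ — then forces exactly one member of each orbit into $A(u)$ and exactly one into $D(u)$, so the two cardinalities above coincide and $\ell(ut)=\ell(u)+1$. The identical computation, now using $r\in A(v)\Leftrightarrow r^t\in D(v)$, gives $\ell(vt)=\ell(v)-1$; both covering relations follow.

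It remains to supply these biconditionals, and here the classification enters. A finite simply-laced group is a direct product of irreducibles, and since a reflection belongs to a single factor while $AD(u,v)$ and the order $\prec$ split along the factors, I would first reduce to $W$ irreducible of type $A_n$, $D_n$, $E_6$, $E_7$ or $E_8$. For $A_n$ the two covering relations are already contained in Theorem \ref{GLPsymmetric}; for $D_n$ and $E_8$ the required biconditionals are exactly Theorem \ref{tt*Dn} and Theorem \ref{mainlemm}. For $E_6$ and $E_7$ I would realize them as standard parabolic subgroups of $E_8$: the order $\prec$ and the predicates "$r\in A(u)$", "$r\in D(v)$" restrict unchanged to reflections lying in the parabolic, the geometric implications in the proof of Theorem \ref{mainlemm} never used minimality, and the two implications that do use minimality only require that every $r\in D(t)\setminus\{t\}$ satisfies $r\prec t$ (Corollary \ref{rprect}) and is thereby excluded from $AD(u,v)$ — a property that survives the restriction. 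The main obstacle is precisely this last ingredient: the whole theorem rests on the biconditional relating $A(u),A(v)$ and the conjugates $r^t$, and establishing it uniformly seems to require the case analysis of \S\ref{typeD}–\S\ref{typeE}; a classification-free proof of these biconditionals would immediately yield a classification-free proof of the covering property.
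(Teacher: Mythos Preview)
Your argument is correct and is essentially the paper's own proof: the paper packages the same computation as an explicit bijection $\phi:D(u)\to D(ut)\setminus\{t\}$ given by $\phi(r)=r^t$ on $A(t)$ and $\phi(r)=r$ on $D(t)$, verifying $r\in D(u)\Leftrightarrow\phi(r)\in D(ut)$ via Proposition~\ref{crucialprop} and the biconditionals of Theorems~\ref{tt*Dn} and~\ref{mainlemm}, which is exactly your orbit-counting rephrased. The paper's parabolic reduction is slightly cleaner than yours (it observes once that the statement for $W$ implies it for every parabolic subgroup, so only $D_n$ and $E_8$ need treatment), but the content is the same.
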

\begin{proof}
If $W=W_1\times W_2$ is a direct product of two Coxeter groups the set of reflections of $W$ is given by the union of the sets of reflections of $W_1$ and $W_2$ and we deduce that we can assume that $W$ is irreducible. Morevoer, we observe that if the statement is valid for a Coxeter group $W$ then it holds also for every parabolic subgroup of $W$ and so it is sufficient to prove the result for the groups $D_n$ and $E_8$ only.

To show that $u\lhd ut$ we have to prove that $\ell(u)=\ell(ut)-1$ and for this it is enough to show that there exists a bijection $\phi:D(u)\rightarrow D(ut)\setminus\{t\}$.
   The bijection $\phi$ is the restriction to $D(u)$ of the following involution on $T$: we let
   \begin{equation*}
     \phi(r)=\begin{cases}
             r^t,&\textrm{if $r\in A(t)$,}\\r,& \textrm{otherwise},
            \end{cases}
   \end{equation*} for all $r\in T$.

   The map $\phi$ is a bijection on $T$ by Lemma \ref{lemmainvarianza} and so to conclude we have to prove that if $r\neq t$ then
   \begin{equation}\label{phibij}
   r\in D(u) \Leftrightarrow \phi(r)\in D(ut).
   \end{equation}
   First assume $r\in D(t)$, and so also $r^t\in D(t)$ by Lemma \ref{lemmainvarianza}. By Proposition \ref{crucialprop} applied to $r^t$ we have $r^t\in D(u)$ if and only if $r\in A(ut)$. By Theorems  \ref{tt*Dn} and  \ref{mainlemm} we also have $r^t\in D(u)$ if and only if $r\in A(u)$ and therefore Eq. \eqref{phibij} follows in this case.

   If $r\in A(t)$ all the statements of Proposition \ref{crucialprop} are false and in particular we have that \eqref{phibij} is implied by the negation of condition (3) in Proposition \ref{crucialprop}.

   The result for $v$ follows similarly by observing that $\phi$ restricts to a bijection
   $$ \phi:D(v)\setminus\{t\}\rightarrow D(vt).$$

\end{proof}

 We note that the covering property (Theorem \ref{theoremcovering}) does not require that $u\leq v$.

\begin{thm}[Generalized lifting property]\label{GLPfinite}
 Let $(W,S)$ be a finite simply-laced Coxeter system, $u,v\in W$, $u<v$ and $t$ be a minimal element in $AD(u,v)$. Then $u\vartriangleleft ut \leqslant v$ and $u\leqslant vt \vartriangleleft v$.
\end{thm}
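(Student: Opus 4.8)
The plan is to prove the Generalized Lifting Property (Theorem \ref{GLPfinite}) by combining the Covering Property (Theorem \ref{theoremcovering}) with the inductive structure provided by Propositions \ref{goodsDn} and \ref{goodsE8}. The Covering Property already gives us, for $t$ a minimal element of $AD(u,v)$, that $u\lhd ut$ and $vt\lhd v$. What remains to be established is the order relations $ut\leq v$ and $u\leq vt$. As in the previous theorem, since the set of reflections of a direct product is the disjoint union of the reflections of the factors, I may assume $W$ is irreducible, and it suffices to treat the building blocks $D_n$ and $E_8$ (the groups $A_n$, $E_6$, $E_7$ arising as parabolic subgroups, for which the result follows by restriction).

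The strategy for the order relations is induction on $\ell(u)+\ell(v)$, using the simple reflection $s\in S$ supplied by Proposition \ref{goodsDn} (in type $D$) or Proposition \ref{goodsE8} (in type $E_8$). First I would dispose of the base case $t\in S$: here $t$ is itself a simple reflection lying in $AD(u,v)$, so $t\in D_R(v)\setminus D_R(u)$ (after passing to the appropriate right-descent formulation), and the classical lifting property (Proposition \ref{sollevamento}) immediately yields $ut\leq v$ and $u\leq vt$, while the covering relations come from Theorem \ref{theoremcovering}. For the inductive step I assume $t\notin S$ and take $s\in S$ as in Proposition \ref{goodsDn} or \ref{goodsE8}, so that $s\in D(t)$, the conjugate $s^t=t^s$ is a minimal element of $AD(us,vs)$, and $s\in D_R(u)\Leftrightarrow s\in D_R(v)$.

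The crux is to push the problem down to the pair $(us,vs)$, where $\ell(us)+\ell(vs)<\ell(u)+\ell(v)$, apply the inductive hypothesis, and then lift the conclusions back. Concretely, I would first verify that $us<vs$, so that the interval $[us,vs]$ is again a genuine Bruhat interval eligible for the inductive hypothesis; the condition $s\in D_R(u)\Leftrightarrow s\in D_R(v)$ is exactly what guarantees that multiplying both $u$ and $v$ on the right by $s$ changes both lengths in the same direction and preserves the order relation. Applying induction to $(us,vs)$ with the minimal reflection $s^t=t^s$ gives $us\cdot t^s\leq vs$ and $us\leq vs\cdot t^s$. Now using $s^t=t^s$ together with the identity $s^t=sts$ one computes $us\cdot t^s=us(sts)=u\cdot ts=(ut)s$ and similarly $vs\cdot t^s=(vt)s$, so the inductive conclusions read $(ut)s\leq vs$ and $us\leq (vt)s$. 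The final task is to remove the trailing $s$ from these inequalities and recover $ut\leq v$ and $u\leq vt$; this is precisely where the lifting property (Proposition \ref{sollevamento}) is applied once more, using the descent information about $s$ with respect to $ut$, $vt$, $u$, and $v$ that follows from $s\in D(t)$ and the Covering Property.

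The main obstacle I anticipate is the bookkeeping in this last lifting step: one must check carefully that $s$ sits on the correct side (as a right descent or ascent) of each of the four elements $u,v,ut,vt$ so that Proposition \ref{sollevamento} applies in the intended direction, and this requires translating between the $D(\cdot)$ and $D_R(\cdot)$ descent sets and tracking how right multiplication by $s$ interacts with the covering relations $u\lhd ut$ and $vt\lhd v$ already established. The relation $s^t=t^s$ is what keeps these manipulations consistent, and the hypothesis $s\in D_R(u)\Leftrightarrow s\in D_R(v)$ is what prevents the induction from breaking the inequality $us<vs$; verifying that these two algebraic facts combine to give clean lifting on both sides is the delicate point, whereas the reduction to $D_n$ and $E_8$ and the base case are routine.
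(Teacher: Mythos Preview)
Your overall architecture matches the paper's: reduce to $D_n$ and $E_8$, invoke the Covering Property for $u\lhd ut$ and $vt\lhd v$, and prove $ut\leq v$ and $u\leq vt$ by an induction that passes from $(u,v,t)$ to $(us,vs,s^t)$ via the simple reflection $s$ of Propositions~\ref{goodsDn} and~\ref{goodsE8}, then lift back using Proposition~\ref{sollevamento}.

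However, your choice of induction variable is a genuine gap. You claim $\ell(us)+\ell(vs)<\ell(u)+\ell(v)$, but the information you have is only $s\in D_R(u)\Leftrightarrow s\in D_R(v)$: both lengths move in the \emph{same} direction, not necessarily downward. When $s$ is an ascent of both $u$ and $v$ the sum increases by~$2$, and your induction does not terminate. Nothing in Propositions~\ref{goodsDn} or~\ref{goodsE8} forces $s$ to be a descent of $u$ (or of $v$); the condition $s\in D(t)$ concerns the reflection $t$, not the elements $u,v$.

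The paper avoids this by inducting on the partial order $\prec$ on $T$ rather than on lengths: since $s\in D(t)\setminus\{t\}$ (and hence $s^t\in D(t)\setminus\{t\}$ by Lemma~\ref{lemmainvarianza}), Corollaries~\ref{twistDcor} and~\ref{rprect} give $s^t\prec t$, so the new minimal reflection is strictly smaller regardless of what happens to $\ell(u)$ and $\ell(v)$. With that correction your plan becomes essentially the paper's proof; the final lifting from $(ut)s\leq vs$ to $ut\leq v$ uses exactly the equivalence $s\in D_R(ut)\Leftrightarrow s\in D_R(v)$, which the paper extracts from Proposition~\ref{crucialprop} together with Theorems~\ref{tt*Dn} and~\ref{mainlemm}.
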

\begin{proof} Once again it is sufficient to prove the result for the groups $D_n$ and $E_8$.  By Theorem \ref{theoremcovering} we only have to show that $ut\leq v$ and $u\leq vt$.
We proceed by induction on the partial order $\prec$ on reflections. If $t$ is minimal in $T$ then $t\in S$ and the result follows by Proposition \ref{sollevamento}. Otherwise choose $s\in S$ as in Propositions \ref{goodsDn} and \ref{goodsE8}. Since $s^t \in D(t)$ we have $s^t\prec t$ by Corollary \ref{twistDcor} and Corollary \ref{rprect} and we can apply our induction hypothesis to the pair $(us,vs)$. It follows that $uss^t< vs$ or, as the third points in Propositions \ref{goodsDn} and \ref{goodsE8} show, $uts<vs$. To deduce that $ut\leq v$ it is enough to prove that $s\in D(ut)$ if and only if $s\in D(v)$.
But $s\in D(t)$ and so $s\in D(ut)$ if and only if $s^t\in A(u)$ by Proposition \ref{crucialprop} and hence the result follows from Theorems \ref{tt*Dn} and \ref{mainlemm}.
The proof of $u< vt$ is similar.

\end{proof}

The next proposition generalizes the result stated in Proposition \ref{propRsimm}.
\begin{prop} Let $[u,v]$ be a Bruhat interval in a finite simply-laced Coxeter group and $t$ a minimal element in $AD(u,v)$. Then $$R_{u,v}=qR_{ut,vt}+(q-1)R_{u,vt}.$$
\end{prop}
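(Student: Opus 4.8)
The plan is to reproduce the inductive architecture of the proof of Theorem \ref{GLPfinite}: induct on the poset $(T,\prec)$, with the statement quantified over \emph{all} Bruhat intervals $[u,v]$ having the given reflection as a minimal element of $AD$, and reduce a general minimal $t$ to a simple one by multiplying on the right by a suitable $s\in S$. As in the proofs of Theorems \ref{theoremcovering} and \ref{GLPfinite}, I would first reduce to $W=D_n$ or $W=E_8$: the $R$-polynomials of a direct product factor as a product and those of a parabolic subgroup $W_J$ agree with the ambient ones on arguments in $W_J$; moreover $D(v)\subseteq T\cap W_J$ for $v\in W_J$, so that a minimal $t\in AD(u,v)$ automatically lies in the relevant factor (resp.\ parabolic) and stays minimal there. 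Hence the identity descends to the general finite simply-laced case exactly as the covering property does.

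The base case is $t\in S$, i.e.\ $t$ is $\prec$-minimal: then $t\in AD(u,v)$ says precisely $t\in D_R(v)\setminus D_R(u)$, and the asserted identity is literally the second case of the recursion \eqref{ricorsioneRpolinomi}. For the inductive step I assume $t\notin S$ and choose $s\in S$ as provided by Proposition \ref{goodsDn} (type $D$) or Proposition \ref{goodsE8} (type $E_8$), so that $s\in D(t)$, the conjugate $s^t=t^s$ is minimal in $AD(us,vs)$, and $s\in D_R(u)\Leftrightarrow s\in D_R(v)$. Since $s\in D(t)\setminus\{t\}$ we have $s^t\prec t$ by Corollary \ref{twistDcor} / Corollary \ref{rprect}, so the induction hypothesis applies to $(us,vs)$ with minimal reflection $s^t$ and gives
$$R_{us,vs}=qR_{(us)s^t,(vs)s^t}+(q-1)R_{us,(vs)s^t}=qR_{uts,vts}+(q-1)R_{us,vts},$$
where I have used $s^t=t^s=sts$ to compute $(us)s^t=uts=(ut)s$ and $(vs)s^t=vts=(vt)s$. (Applicability of the hypothesis also needs $us<vs$, which follows from $u<v$ via the lifting property once $s$ is on the same side of $u$ and $v$.)

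The crux is a \emph{descent-uniformity} statement: $s$ lies on the same side (descent or ascent) of all four elements $u,v,ut,vt$. The equivalence $s\in D_R(u)\Leftrightarrow s\in D_R(v)$ comes with the chosen $s$; to incorporate $ut,vt$ I would apply condition (4) of Proposition \ref{crucialprop} to the pair $(s,t)$ (legitimate since $s\in D(t)$), obtaining $s\in D(ut)\Leftrightarrow s^t\in A(u)$ and $s\in D(vt)\Leftrightarrow s^t\in A(v)$. Feeding $s^t\in D(t)\setminus\{t\}$ (whose $t$-conjugate is $s$) into Theorem \ref{tt*Dn} / Theorem \ref{mainlemm} converts these into $s\in D(ut)\Leftrightarrow s\in D(u)$ and $s\in D(vt)\Leftrightarrow s\in D(v)$, so all four descent conditions are equivalent. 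Alongside this I would record a \emph{transport lemma}: if $s\in S$ is on the same side of $a$ and $b$, then $R_{a,b}=R_{as,bs}$; this follows from \eqref{ricorsioneRpolinomi} together with the fact, deduced from Proposition \ref{sollevamento}, that $a\leq b\Leftrightarrow as\leq bs$ under this hypothesis, which simultaneously handles the vanishing cases.

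Combining descent-uniformity with the transport lemma yields $R_{u,v}=R_{us,vs}$, $R_{ut,vt}=R_{uts,vts}$ and $R_{u,vt}=R_{us,vts}$; substituting these into the displayed consequence of the induction hypothesis gives exactly $R_{u,v}=qR_{ut,vt}+(q-1)R_{u,vt}$, completing the induction. I expect the descent-uniformity step to be the main obstacle, since it is the point where the minimality of $t$ and the fine structure of $D(t)$ enter, through Theorems \ref{tt*Dn} and \ref{mainlemm} and the self-conjugacy $(s^t)^t=s$; once that is in place the transport lemma and the products such as $(us)s^t=(ut)s$ are routine, provided one keeps in mind that $s^t=t^s=sts$.
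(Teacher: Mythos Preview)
Your proposal is correct and follows essentially the same route as the paper's own proof: reduce to $D_n$ or $E_8$, induct on $\prec$, invoke Propositions \ref{goodsDn}/\ref{goodsE8} to pass from $(u,v,t)$ to $(us,vs,s^t)$, and use that $s$ lies on the same side of $u,v,ut,vt$ to transport the identity back. The only difference is expository: you spell out the descent-uniformity step via Proposition \ref{crucialprop} and Theorems \ref{tt*Dn}/\ref{mainlemm}, whereas the paper compresses it into the single sentence ``$v<vs$ iff $vt<vts$ iff $u<us$ iff $ut<uts$''.
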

\begin{proof} If $W=W_1\times W_2$ is a product of two Coxeter groups then $u=u_1u_2$ and $v=v_1v_2$ with $u_1,v_1\in W_1$ and $u_2,v_2\in W_2$. By \cite[Proposition 1.7]{BiBr}, we have $R_{u,v}=R_{u_1,v_1}R_{u_2,v_2}$ and since $t\in W_1\cup W_2$ we can easily deduce that it is enough to prove the statement if $W$ is irreducible; in particular we can assume that either $W=D_n$ or $W=E_8$ as the other irreducible finite simply-laced Coxeter system are parabolic subgroups of these. We prove the result by induction on the rank function of $(T,\preceq)$. If $t\in S$ the result is the recursion \eqref{ricorsioneRpolinomi}.
Otherwise, choose $s\in S$ as in Lemmas \ref{goodsDn} and \ref{goodsE8}. Then, since  $s\in D_R(v)$ if and only if $s\in D_R(u)$, we have
$R_{u,v}=R_{us,vs}=qR_{ustst,vstst}+(q-1)R_{us,vstst}=qR_{uts,vts}+(q-1)R_{us,vts}=qR_{ut,vt}+(q-1)R_{u,vt}$. The last
equality follows from the fact that $v<vs$ if and only if $vt<vts$ if and only if $u<us$ if and only if $ut<uts$.
\end{proof}

The following example shows that the GLP does not hold in general for infinite simply-laced Coxeter system.

\begin{ex}
Let $n\geqslant 3$ and consider the affine Weyl group $\tilde{A}_n$,  with set of Coxeter generators $S=\{s_1,s_2,...,s_n\}$ (see \cite[Section 8.3]{bjornerbrenti} for a combinatorial description).
Let $v=s_1s_2\cdots s_{n-1}s_ns_1s_2\cdots s_{n-1}$ and $u=s_1s_2...s_{n-1}$. In the combinarial description of $u$ and $v$ as permutations of $\mathbb Z$ we have $v=[v(1),\ldots,v(n)]=[3,4,...,n,n+2,-n+1]$ and $u=[u(1),\ldots,u(n)]=[2,3,...,n,1]$.
Letting $t_i= u^{-1}s_n\cdots s_{i+1}s_is_{i+1}\cdots s_n u$ for all $i\in [n]$ one can check that $AD(u,v)=\{t_1,t_2,\ldots,t_n\}$. 
Nevertheless, using the combinatorial interpretation of the length function in $\tilde A_n$ given in \cite[Section 8.3]{bjornerbrenti} one can conclude that none of the elements $t_1,\ldots,t_n$ satisfy the covering property for $[u,v]$ (in particular we have $\ell (ut_i)>\ell (u)+1$ for all $i=1,\ldots,n-1$ and $\ell (vt_n)<\ell (v)-1$).

\end{ex}

For an infinite simply-laced Coxeter system $(W,S)$, if $s\in S$, the following proposition shows that any interval of the type $[s,u]$ satisfies the GLP.
\begin{prop}
Let $(W,S)$ be a simply-laced Coxeter system and $s\in S$. Then the GLP holds for the interval $[s,u]$, for every $u>s$.
\end{prop}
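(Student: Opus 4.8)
The plan is to prove the statement by exhibiting, for each $u>s$, an explicit reflection $t\in AD(s,u)$ witnessing the GLP, distinguishing two cases according to the right descent set $D_R(u)$. Since $u\ne e$ we have $D_R(u)\ne\varnothing$, and the relevant dichotomy is whether or not $D_R(u)=\{s\}$. Throughout I would use that $D_R(s)=\{s\}$ and $D(s)=\{s\}$, so that every reflection other than $s$ lies in $A(s)$; hence membership of a reflection in $AD(s,u)=A(s)\cap D(u)$ reduces to the single condition that it lie in $D(u)$.

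First I would treat the case $D_R(u)\ne\{s\}$. Then there is a simple reflection $s'\in D_R(u)$ with $s'\ne s$, and I would simply take $t=s'$. Since $s'\ne s$ we have $s\vartriangleleft ss'=st$, while $ut=us'\vartriangleleft u$ because $s'\in D_R(u)$. As $s'\in D_R(u)\setminus D_R(s)$, the lifting property (Proposition \ref{sollevamento}) applied to $s<u$ yields at once $st=ss'\le u$ and $s\le us'=ut$. Thus all four GLP conditions hold in this case, with $t$ simple.

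The remaining, and main, case is $D_R(u)=\{s\}$. Here I set $u':=us$, so that $\ell(u')=\ell(u)-1$, $s\notin D_R(u')$ and $u'\ne e$; I then choose any $s'\in D_R(u')$, necessarily with $s'\ne s$. I would work inside the parabolic subgroup $W_J$ with $J=\{s,s'\}$, using the standard parabolic factorization $u'=m z$ with $m\in W^J$, $z\in W_J$ and $\ell(u')=\ell(m)+\ell(z)$. The conditions $s'\in D_R(u')$, $s\notin D_R(u')$ and $D_R(u)=\{s\}$ translate into constraints on the right descents of $z$ and of $zs$ inside the dihedral group $W_J$. Since $W$ is simply-laced, $m(s,s')\in\{2,3\}$; a short length computation rules out $m(s,s')=2$ (commuting would give $us'=(u's')s$ of length at most $\ell(u')$, contradicting $\ell(us')=\ell(u)+1=\ell(u')+2$), and inside $I_2(3)$ the descent constraints on $z$ and $zs$ force $z=s'$. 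Consequently $u=mzs=ms's$ is reduced with $m\in W^J$, and I would take $t:=ss's=(s')^{s}$, a reflection with $t\ne s$. A direct computation gives $st=s's$, so $s\vartriangleleft st$ and $st\le u$ by the subword property, and $ut=ms's\cdot ss's=ms$, whence $\ell(ut)=\ell(m)+1=\ell(u)-1$, $ut\vartriangleleft u$, and $s\le ms=ut$ again by the subword property. Thus the GLP holds.

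The step I expect to be the main obstacle is the structural analysis in the case $D_R(u)=\{s\}$: one must show that the only configuration compatible with $s$ being the unique right descent of $u$ is $u=ms's$ with $m\in W^J$, and it is precisely here that simple-lacedness ($m(s,s')=3$) is indispensable. Indeed, for $m(s,s')\ge4$ the alternative $z=ss'$ becomes admissible and the construction breaks down, in agreement with Example \ref{esempioI4}. Every other verification is a routine application of the lifting property and of the subword characterization of Bruhat order.
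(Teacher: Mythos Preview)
Your proof is correct and follows essentially the same approach as the paper: both handle the case $D_R(u)\ne\{s\}$ via the standard lifting property, and in the case $D_R(u)=\{s\}$ both pick $s'\in D_R(us)$, use simple-lacedness to force $m(s,s')=3$, and take $t=ss's$. The only difference is cosmetic: the paper argues directly with a reduced expression $u=s_1\cdots s_{k-1}s_ks$ (with $s_k$ playing the role of your $s'$) and shows $s\notin D_R(s_1\cdots s_{k-1})$ by the braid relation, whereas you package the same dihedral analysis via the parabolic factorization $u'=mz$ to conclude $z=s'$; the resulting reflection $t$ and the verifications are identical.
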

\begin{proof}
  If $D_R(u)\neq \{s\}$ the result follows by Proposition \ref{sollevamento}. Let
$D_R(u)=\{s\}$; then, since $(W,S)$ is simply laced, we have that
$s_1...s_{k-1}s_ks$ is a reduced expression for $u$ and $s\not \in D_R(uss_k)$ because, otherwise,
$s_k\in D_R(u)$. Then the reflection $t=ss_ks$ satisfies $s\vartriangleleft st \leqslant u$ and $s\leqslant ut \vartriangleleft u$.
\end{proof}

Our last goal is to show that, in general, Bruhat intervals in infinite simply-laced Coxeter groups do not satisfy the GLP. Since every infinite Coxeter group has a parabolic subgroup isomorphic to an affine Weyl group it is enough to consider this class of groups.

The next lemma states that any element in an affine Weyl group is smaller in the Bruhat order than every element of large enough length.

\begin{lem} \label{lemmabruhat}
  Let $(W,S)$ be an affine Coxeter system. Then for any $u \in W$ there exists $n \in \mathbb{N}$ such that
  $u< v$ for all $v \in W$ whose length satisfies $\ell(v)> n$.
\end{lem}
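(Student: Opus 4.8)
The statement to prove is Lemma~\ref{lemmabruhat}: in an affine Coxeter system, every fixed element $u$ is dominated in Bruhat order by all elements of sufficiently large length.

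My plan is to use the geometric/alcove picture developed in Section~\ref{affine}. The plan is to reformulate the Bruhat-order condition $u \leq v$ in terms of the alcoves $\mathcal A_u$ and $\mathcal A_v$, and then show that $\mathcal A_v$ is ``far enough'' from the walls associated to $u$ once $\ell(v)$ is large. The key tool is Proposition~\ref{disuguaglianza}, which says that if two alcoves both lie outside a hyperstripe $\mathrm{Str}(t^{(k)})$, then reflecting one of them in $t^{(k)}$ changes the distance to the other by at least $3$; in particular it changes the distance, so reflecting strictly decreases $d(\mathcal A_0, \cdot)$ on one side. The intended mechanism is that $u \leq v$ should hold whenever $\mathcal A_v$ lies on the far side of every reflecting hyperplane in $D(u)$, i.e. outside finitely many hyperstripes determined by $u$.

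The steps I would carry out are as follows. First, I would recall the standard characterization of Bruhat order via reflections (the ``subword'' or reflection-order criterion), reformulated so that $u \leq v$ is implied by a separation condition on alcoves: concretely, if for every (affine) reflection $t^{(k)} \in D(u)$ the alcove $\mathcal A_v$ lies in $H^-_{t^{(k)}}$ (the side away from $\mathcal A_0$, so that $t^{(k)} \in D(v)$ too) then $D(u) \subseteq D(v)$ in a compatible way and one can build a reduced subword exhibiting $u \leq v$. Since $D(u)$ is finite (it has exactly $\ell(u)$ elements by Proposition~\ref{lunghezzaffini}), this is a \emph{finite} list of hyperplanes. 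Second, I would invoke Proposition~\ref{chamberhyperstripes}: a chamber cannot be covered by finitely many hyperstripes. The idea is to thicken each hyperplane $H_{t^{(k)}}$ with $t^{(k)}\in D(u)$ into the hyperstripe $\mathrm{Str}(t^{(k)})$; the union of these finitely many hyperstripes cannot cover the fundamental chamber, nor any fixed unbounded direction inside it, so there exist alcoves arbitrarily far out lying outside all of them, on the correct side. Third, I would translate ``outside all these hyperstripes and on the far side'' back into the statement $u \leq v$, using Proposition~\ref{disuguaglianza} to control the bookkeeping: once $\mathcal A_v$ is beyond all the relevant stripes, each wall of $D(u)$ genuinely separates $\mathcal A_0$ from $\mathcal A_v$ and lies at combinatorial distance making it an inversion of $v$. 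Finally, I would convert the geometric conclusion into the length bound: since only finitely many alcoves fail to lie on the correct side of all these hyperplanes, and each alcove has a fixed distance $\ell(v) = d(\mathcal A_0,\mathcal A_v)$, all but finitely many alcoves (hence all $v$ with $\ell(v)$ larger than some $n$) satisfy the separation condition, giving $u \leq v$.

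The main obstacle I anticipate is making rigorous the implication ``$\mathcal A_v$ lies on the far side of every wall in $D(u)$'' $\Longrightarrow$ ``$u \leq v$''. The subtlety is that $D(u)\subseteq D(v)$ as subsets of $\hat T$ does not by itself immediately give $u\leq v$; one needs a genuine Bruhat-order criterion, and the natural route is the characterization $u \leq v$ iff every reduced word for $v$ contains a reduced word for $u$ as a subword, reinterpreted through inversion sets. A clean way around this difficulty is to argue that the finitely many alcoves $\mathcal A$ with $u \not\leq w$ (where $\mathcal A=\mathcal A_w$) are exactly those trapped within the union of the finitely many hyperstripes $\mathrm{Str}(t^{(k)})$, $t^{(k)}\in D(u)$, together with the alcoves on the near side; Proposition~\ref{chamberhyperstripes} guarantees this trapped set is contained in a bounded region, hence meets only finitely many alcoves, and each such alcove has bounded $d(\mathcal A_0,\cdot)$. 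Taking $n$ to be the maximum of these finitely many lengths then yields the claim. The comparison-versus-length link and the precise Bruhat criterion are where I would need to be careful, but the geometric ingredients (Propositions~\ref{disuguaglianza} and~\ref{chamberhyperstripes}) are exactly tailored to supply both the finiteness and the ``far enough'' separation.
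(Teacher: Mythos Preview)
Your proposal has a genuine gap, and it is precisely the obstacle you yourself flag: you never establish a usable implication from any alcove/separation condition to the Bruhat relation $u\leq v$. The containment $D(u)\subseteq D(v)$ does \emph{not} imply $u\leq v$ in general, and neither Proposition~\ref{disuguaglianza} nor Proposition~\ref{chamberhyperstripes} says anything about Bruhat comparability; the former is a length inequality under a single reflection, and the latter is an existence statement about alcoves escaping finitely many stripes. Your proposed workaround --- that the alcoves $\mathcal A_w$ with $u\not\leq w$ are ``trapped'' inside the union of the stripes $\mathrm{Str}(t^{(k)})$, $t^{(k)}\in D(u)$, together with the near half-spaces --- is unjustified and, as stated, false: the intersection of finitely many half-spaces (the ``near'' sides of the walls in $D(u)$) is an unbounded region containing infinitely many alcoves, so this set is not bounded, and you have no independent argument tying $u\not\leq w$ to membership in that set. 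In effect the finiteness of $\{w:u\not\leq w\}$ is exactly the content of the lemma, and your geometric picture does not supply it.

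The paper's proof avoids geometry entirely and is much shorter. It uses only two facts: (i) in an affine Coxeter system every proper parabolic subgroup $W_J$ is finite, so there is a uniform bound $k$ on the length of elements lying in any proper $W_J$; and (ii) the subword characterisation of Bruhat order. Fix a reduced expression $u=s_1\cdots s_h$ and set $n=h(k+1)$. If $\ell(v)>n$, cut any reduced word for $v$ into $h$ consecutive blocks $v_1,\ldots,v_h$, each of length $>k$. Each block $v_i$ then uses every simple reflection (otherwise it would lie in a proper parabolic and have length $\leq k$), so in particular the letter $s_i$ occurs in the block $v_i$. Reading these occurrences in order exhibits $s_1\cdots s_h$ as a subword of the reduced word for $v$, hence $u\leq v$. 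This argument is self-contained and does not require any of the alcove machinery; the propositions you planned to use are employed in the paper only later, for Theorem~\ref{GLPinfinite}.
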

\begin{proof} Since every parabolic subgroup $W_J$ of $W$ is finite, we let $k$ be the maximum length of an element in a proper parabolic subgroup of $W$.
If $u\in W$ and $s_1s_2...s_h$ is a reduced expression for $u$, consider the number $n=h(k+1)$.
If $v\in W$ is any element whose length satisfies $\ell(v)>n$, then $v$ should has a reduced expression such as $v_1v_2v_3...v_h$, with $\ell(v_i)>k$ for all $i$. In particular $v_i$ is not contained in any parabolic subgroups of $W$ and hence $s_i<v_i$ for all $i\in [h]$ and so $u<v$.




\end{proof}

\begin{thm} \label{GLPinfinite} Let $(\hat{W},S)$ be an affine Coxeter system
 and $\mathcal A_u$ be any alcove contained in the chamber $-\mathcal C_0$. Then there exists $v\in \hat{W}$ such that $u<v$ but the interval $[u,v]$ does not satisfy the covering property.
\end{thm}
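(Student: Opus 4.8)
The plan is to argue entirely inside the alcove model and to reduce the whole statement to Proposition \ref{disuguaglianza}. For $w\in\hat W$ and $t\in T$, introduce the unique integer $m_t(w)$ with $\phi_t(x)\in(m_t(w),m_t(w)+1)$ for all $x\in\mathcal A_w$; this is well defined since an alcove meets no hyperplane $H_{t^{(j)}}$. I first record the elementary dictionary: $\mathcal A_0\in\mathrm{Str}(t^{(k)})$ exactly when $k\in\{0,1\}$, $\mathcal A_w\in\mathrm{Str}(t^{(k)})$ exactly when $k\in\{m_t(w),m_t(w)+1\}$, and by Proposition \ref{lunghezzaffini} one has $t^{(k)}\in D(w)$ iff $H_{t^{(k)}}$ separates $\mathcal A_0$ from $\mathcal A_w$, i.e. iff $m_t(w)+1\le k\le 0$; dually $t^{(k)}\in A(w)$ iff $k\le m_t(w)$ or $k\ge 1$. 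The hypothesis $\mathcal A_u\subseteq-\mathcal C_0$ translates into $\phi_t<0$ on $\mathcal A_u$ for every $t\in T$, hence $m_t(u)\le-1$ for all $t$.

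The construction of $v$ is the crux. Fix any $x_0\in-\mathcal C_0$, so $\phi_t(x_0)<0$ for all $t\in T$. Since $-\mathcal C_0$ is a cone and $T$ is finite, for $c>0$ large the point $cx_0$ satisfies $\phi_t(cx_0)=c\,\phi_t(x_0)<m_t(u)-1$ for every $t\in T$ (this is the scaling idea underlying Proposition \ref{chamberhyperstripes}). Choosing $c$ generic so that $cx_0$ avoids all affine hyperplanes, I take $\mathcal A_v$ to be the alcove containing $cx_0$; then $\mathcal A_v\subseteq-\mathcal C_0$ and, comparing $\phi_t(cx_0)\in(m_t(v),m_t(v)+1)$ with the above, $m_t(v)\le m_t(u)-2$ for every $t\in T$. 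Enlarging $c$ makes $\ell(v)=d(\mathcal A_0,\mathcal A_v)$ as large as desired, so by Lemma \ref{lemmabruhat} I may simultaneously guarantee $u<v$. The dictionary then gives $AD(u,v)=\{t^{(k)}:\ m_t(v)+1\le k\le m_t(u)\}$, which is nonempty since $m_t(v)\le m_t(u)-2$.

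Finally I verify that covering fails at \emph{every} $t^{(k)}\in AD(u,v)$, splitting on the value of $k$. If $k<m_t(u)$, then $k\le m_t(u)-1\le-2$, so $\mathcal A_0\notin\mathrm{Str}(t^{(k)})$ and $\mathcal A_u\notin\mathrm{Str}(t^{(k)})$; Proposition \ref{disuguaglianza} applied to $(\mathcal A_0,\mathcal A_u)$ gives $|\ell(u)-\ell(ut^{(k)})|\ge 3$, and since $t^{(k)}\in A(u)$ we get $\ell(ut^{(k)})\ge\ell(u)+3$, so $u\lhd ut^{(k)}$ fails. If instead $k=m_t(u)$, then $\mathcal A_u\in\mathrm{Str}(t^{(k)})$, so the $u$-side argument is unavailable; this borderline reflection, for which $u\lhd ut^{(k)}$ does in fact hold, is the main obstacle and the precise reason for pushing $v$ two steps deeper than $u$ in every direction. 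Indeed here $m_t(u)\notin\{0,1\}$ and, because $m_t(v)\le m_t(u)-2$, also $m_t(u)\notin\{m_t(v),m_t(v)+1\}$; hence $\mathcal A_0,\mathcal A_v\notin\mathrm{Str}(t^{(k)})$, and Proposition \ref{disuguaglianza} applied to $(\mathcal A_0,\mathcal A_v)$ yields $\ell(vt^{(k)})\le\ell(v)-3$, so $v\rhd vt^{(k)}$ fails. Thus no $t\in AD(u,v)$ satisfies both $u\lhd ut$ and $v\rhd vt$, which is exactly the failure of the covering property for $[u,v]$.
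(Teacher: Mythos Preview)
Your proof is correct and follows the same overall strategy as the paper: construct $v$ deep inside $-\mathcal C_0$, invoke Lemma~\ref{lemmabruhat} to force $u<v$, and use Proposition~\ref{disuguaglianza} to rule out the relevant coverings. The difference is organizational. The paper works with the finite set $R(u)=\{t^{(k)}:u\lhd ut^{(k)}\}$, applies Proposition~\ref{chamberhyperstripes} as a black box to find $\mathcal A_v$ avoiding the hyperstripes indexed by $R(u)$, and then shows that the $v$-side covering $v\rhd vt^{(k)}$ fails for every $t^{(k)}\in R(u)$ (treating $k=1$ separately). You instead introduce the explicit coordinate $m_t(w)$, re-derive the scaling argument behind Proposition~\ref{chamberhyperstripes} by hand to obtain $m_t(v)\leq m_t(u)-2$, and then enumerate $AD(u,v)$ directly, splitting on whether $k<m_t(u)$ (where the $u$-side covering fails) or $k=m_t(u)$ (where the $v$-side fails). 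Your bookkeeping via $m_t$ makes the whole argument more transparent and self-contained, and in particular avoids the separate treatment of $k=1$; the paper's version is a bit shorter by citing Proposition~\ref{chamberhyperstripes} rather than reproving it.
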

\begin{proof} Let $W$ and $T$ be as in \S \ref{affine}.
 The fundamental alcove is given by the inequalities $0<\phi_t(x)<1$ for all $t\in T$. If $\mathcal A=\mathcal A_u$ is an alcove in $-\mathcal C_0$, then it satisfies $\mathcal A\in H^-_t$ for every $t\in T$; therefore $T\subseteq D(u)$ and $\{t^{(1)}:\,t\in T\} \subseteq A(u)$. Consider the finite set $R(u)=\{t^{(k)}\in \hat{T}:\, u\lhd ut^{(k)} \}$; by Proposition \ref{chamberhyperstripes}, we can find an alcove  $\mathcal A_v\in-\mathcal C_0$ such that

 \[
  \mathcal A_v \not \subset \bigcup_{t^{(k)}\in R(u)} \mathrm{Str}(t^{(k)}).
 \]
Moreover, such element $v$ can be chosen of arbitrary high length (as $-\mathcal C_0 \setminus \bigcup_{t^{(k)}\in R(u)} \mathrm{Str}(t^{(k)}$ is unbounded), and in particular we can assume that $u<v$ by Lemma \ref{lemmabruhat}.

 Now let $t^{(k)} \in R(u)$. It is enough to show that $v$ does not cover $vt^{(k)}$. We already know that $k\neq 0$ by construction (since $t=t^{(0)}\in D(u)$) and that $t^{(1)}\in A(v)$ by our previous remark. For $k\neq 0,1$ we have that the fundamental alcove $\mathcal A_0$ is not contained in the hyperstripe $\mathrm{Str}(t^{(k)})$ and so we can apply Proposition \ref{disuguaglianza} to the pair $\mathcal A_0$ and $\mathcal A_v$. In particular we have
 \[
  |\ell(v)-\ell(vt^{(k)})|=|d(\mathcal A_0,\mathcal A_v)-d(\mathcal A_0,t^{(k)}(\mathcal A_v))|\geq 3
 \]
and the proof is complete.

\end{proof}
By the theorem above, in every infinite Coxeter group there exists an interval which does not satisfy the GLP. Therefore with Example \ref{esempioI4}, Theorem \ref{GLPfinite} and Theorem \ref{GLPinfinite} we have proved the following theorem.

\begin{thm} \label{teoremaGLP}
  Let $(W,S)$ be a Coxeter system. The GLP holds for every Bruhat interval $[u,v]$ in $W$ if and only if
  $(W,S)$ is finite and simply-laced.
\end{thm}


\end{document}